\definecolor{asparagus}{rgb}{0.53, 0.66, 0.42}
\providecommand{\U}[1]{\protect\rule{.1in}{.1in}}
\newtheorem{theorem}{Theorem}
\newtheorem{definition}[theorem]{Definition}
\newtheorem{lemma}[theorem]{Lemma}
\newtheorem{proposition}[theorem]{Proposition}
\newenvironment{proof}[1][Proof]{\textit{#1.} }{\ \rule{0.5em}{0.5em}}
\begin{document}

\title{Extension of Hamel paradox for the 2D exterior Navier-Stokes problem}
\author{Zhengguang Guo\footnote{School of Mathematics and Statistics, Huaiyin Normal  University, Huaian 223300, Jiangsu, China; E-mail: gzgmath@hytc.edu.cn } \& Matthieu Hillairet\footnote{Institut Montpelliérain Alexander Grothendieck, Univ. Montpellier, CNRS, Montpellier, France; E-mail: matthieu.hillairet@umontpellier.fr
}}
\maketitle
\begin{abstract}
In this paper, we continue the analysis of the stationary exterior Navier-Stokes problem with interior boundary data and vanishing condition at infinity. We first show an existence result that extends a previous contribution of the second author \cite{HillairetWittwer} by considering boundary data prescribing a non-trivial flux on the internal boundary. We obtain in particular that the non-uniqueness result of G. Hamel \cite{Hamel} extends to an open set of internal boundary data.  We then show that one way to recover uniqueness of a solution is to complement the perturbation of velocity field with a decay condition at infinity for small circulation through the interior boundary. Our method is based on a fine analysis of the linearized Navier-Stokes system around potential flows in the exterior domain.
\end{abstract}

\tableofcontents

\section{Introduction}

In this paper we address the existence and uniqueness of solutions to the planar Navier-Stokes system in the exterior of a unit disk $B\subset \mathbb{R}^2$: 
\begin{equation} \label{eq_NS2D}
\left\{
\begin{aligned}
\left( {\bf u} \cdot \nabla\right) {\bf u} + \nabla p & =  \Delta {\bf u}, && \text{in $\mathbb R^2 \setminus  \overline{B}$},\\
{\rm div} {\bf u} & = 0,&& \text{in $\mathbb R^2 \setminus  \overline{B}$}, \\
{\bf u} &= {\bf u}^*, && \text{on $\partial B$}, \\
{\bf u} &= 0, && \text{at  infinity}.
\end{aligned}
\right.
\end{equation} 
This system represents the stationary motion of a viscous incompressible fluid outside $B$ where an inflow ${\bf u}^*$ is imposed.  It has been widely studied for years. Following the seminal approach of J. Leray \cite{leray} a first method to solve this system consists in constructing solutions with finite $\dot{H}^1$-norm by an exhaustion method \cite{fujita}. Unfortunately, this weak solution method is irrelevant in this 2D exterior case since it does not ensure the boundary condition at infinity is satisfied \cite{KorPiRu}. More references on this topic can be found in the reference book on exterior problems \cite{Galdi}. Particularly, the existence and uniqueness for a non-zero boundary condition at infinity was discussed in \cite[Section XII.5]{Galdi} or a recent work \cite{GazzolaNeustupaSperone}.
One way to explain the flaw of the exhaustion method is that the standard linearization at infinity around the profile $({\bf u}, p)=(0, 0)$ is not uniquely solvable. This observation is named ‘‘Stokes paradox" after the seminal computations of G.G.  Stokes \cite[Section V]{Galdi}. To circumvent this difficulty, a straightforward idea is to look for an {\em a priori} first order asymptotics of a possible solution in order to perform a perturbation method with a more suitable linearized system. A natural candidate in this respect is a potential flow $\bf u$ that cancels independently the two terms $({\bf u} \cdot \nabla) {\bf u} + \nabla p$ and $\Delta {\bf u}$ with $p = -|{\bf u}|^2/2$. This idea is proposed and implemented when $B$ is a disk in \cite{HillairetWittwer} with specific $-1$-homogeneous potential flows corresponding to rotating profiles with rotation velocity decaying at  infinity. However, the Stokes paradox still influences the computations in this setting. It implies that if one chooses a small perturbation of a given profile for boundary data on $\partial B,$ the associated solution will converge to another profile at infinity. This result is limited to sufficiently fast rotation velocities (on $\partial B$). The method of proof relies on a sharp description of the associated linearized system with a Fourier series argument based on the symmetries of the geometry. The optimality of the limiting condition on the rotation velocity is supported by numerical experiment \cite{Guiwer}. Other candidates for asymptotic profiles are computed in \cite{Guillod}.  The method and results of \cite{HillairetWittwer} have been improved and extended to various directions: computations of stationary solutions in a rotating frame \cite{GalHiMa}, other factorization methods \cite{MaTsu}, boundary data with flux \cite{Higaki}, three-dimensional setting \cite{HiHo}, time-dependent problem \cite{EHL}, etc. This review is restricted to general boundary data with no symmetry assumptions. We also note that the general existence theory for (\ref{eq_NS2D}), even for small data, is still open, and few results are available only under symmetry assumptions on the data (see  \cite{Galdiarticle} for instance).

A further obstacle to the possibility of an existence/uniqueness result for \eqref{eq_NS2D} is due to G. Hamel. In his seminal contribution \cite{Hamel}, G. Hamel constructed a three-parameter family of solutions to \eqref{eq_NS2D} with remarkable features,  see also \cite[p. 803]{Galdi}. To explain his result in more details, let consider $B = B(0,1)$ is a unit disk and introduce $(r,\theta)$ the corresponding polar coordinates (with the associated local basis $({\bf e}_r,{\bf e}_{\theta})$).  We can then split any vector-field ${\bf v}: \mathbb R^2 \setminus B(0,1) \to \mathbb R^2$ into:
\[
{\bf v} = v_r {\bf e}_r + v_{\theta}{\bf e}_{\theta}. 
\]
The velocity-field ${\bf v}^{(H)}$ of a Hamel flow is fixed by the three constants $(\phi,\mu,\lambda) \in \mathbb R^3$ and given by the formulas:
\begin{equation} \label{eq_Hamelflow}
v^{(H)}_r(r,\theta) = -  \dfrac{\phi}{r} \qquad v^{(H)}_{\theta}(r,\theta)
 = \lambda r^{1-\phi} + \dfrac{\mu}{r} . 
\end{equation}
We recognize a flow which consists of a constant suction proportional to $\phi$ (ensuring that the flux through circles is constant) and a rotation velocity depending on $r$ (thus the name of spiral flow).  These velocity-fields vanish at infinity in case 
$\lambda=0$ or $\phi >  1$.  Considering these flows outside $B,$ we observe that the restriction of ${\bf v}^{(H)}$ on $\partial B$ prescribes $\phi$ and thus completely  the radial component ${v}^{(H)}_r$ of ${\bf v}^{(H)}$ on $\partial B.$ On the contrary, the tangential part of ${\bf v}^{(H)}$ being a combination of two parts, prescribing the boundary values leaves one free parameter. For instance, prescribing vanishing boundary tangential velocity enforces only that $\lambda + \mu =0$ and we loose uniqueness. In this paper, we build upon the following further remarks. If we consider that $\phi  > 2,$  the leading term of $v^{(H)}_{\theta}$ at infinity is $\mu/r$ that entails two properties:
\begin{itemize}
\item[i)] when $r >> 1,$ ${\bf{v}}^{(H)}$ is a perturbation of :
\begin{equation} \label{eq_uref}
{\bf u}_{ref}[\phi,\mu](r,\theta) = \dfrac{-\phi}{r} {\bf e}_r + \dfrac{\mu}{r} {\bf e}_{\theta}.
\end{equation}
\item[ii)] prescribing $\lim_{r \to \infty} v_{\theta}^{(H)}(r,\theta)r$ fixes the constant $\mu$ and thus the  combination in \eqref{eq_Hamelflow}. 
\end{itemize}

For the analysis below, we recall that 
\[
2 \pi \phi = \int_{\partial B} {\bf v}^{(H)} \cdot {\bf n} {\rm d}\sigma
\]
is the flux prescribed by the boundary value for ${\bf v}^{(H)}.$ We recall that this is an invariant since ${\bf v}^{(H)}$ is divergence-free. In particular, a similar identity holds when replacing $\partial B$ by any simple curve circling around $B$ (a circle centered in the origin of radius larger than $1$ for example). Following the theory on inviscid flows \cite{MarchioroPulvirenti}, the quantity
\[
2\pi \mu =  \int_{\partial B} {\bf v}^{(H)} \cdot {\bf n}^{\bot} {\rm d}\sigma
\]
is the circulation prescribed by the boundary value of ${\bf v}^{(H)}$ on $\partial B.$ 

To state our main result, we decompose any boundary data ${\bf u}^{*}$ involved in \eqref{eq_NS2D} as follows:
\[
{\bf u}^* =  - \phi_0 {\bf e}_r + \mu_0 {\bf e}_{\theta} +  {\bf v}^*  \qquad 
\]
with ${\bf v}^* \in C^{\infty}(\partial B)$ a small perturbation prescribing no flux nor circulation through $\partial B.$  
Our main existence result then reads
\begin{theorem} \label{thm_existence_intro}
Let $\phi_0 \in [0,\infty)$ and $\mu_0 \in \mathbb R$ satisfy:
\begin{equation} \label{eq_conditionphimu_intro}
\phi_0 > \frac 32 \qquad \text{ or } \qquad \left( \phi_0 \in \left[0, 3/2 \right] \; \text{ and }\; \; |\mu_0| > (4-\phi_0) \sqrt{3-2\phi_0} \right).  
\end{equation}
There exists a ball of perturbation ${\bf v}^*$ (for some topology to be made precise) satisfying
\[
\int_{\partial B}{\bf v}^* \cdot {\bf n}{\rm d}\sigma = 0, \qquad \int_{\partial B} {\bf v}^{*} \cdot {\bf n}^{\bot} {\rm d}\sigma = 0
\] 
such that:
\begin{itemize}
\item[if] $0\leq \phi_0 \leq 2,$ there exists a smooth solution $({\bf u},p)$ to \eqref{eq_NS2D} with boundary data 
\[
{\bf u}^* = - \phi_0 {\bf e}_r + \mu_0 {\bf e}_{\theta} +  {\bf v}^*.
\]
Moreover, there exists $\mu$ close to $\mu_0$ such that:
\[
\lim_{r \to \infty} \sup_{\theta \in (-\pi,\pi)} r \left| {\bf u}(r,\theta) - {\bf u}_{ref}[\phi_0,\mu](r,\theta) \right| = 0.
\]
\item[if] $\phi_0 > 2,$  for every $\mu$ close to $\mu_0$, there exists a smooth solution $({\bf u}_{\mu},p_{\mu})$ to \eqref{eq_NS2D} with boundary data 
\[
{\bf u}^* = - \phi_0 {\bf e}_r + \mu_0 {\bf e}_{\theta} + {\bf v}^*,
\]
such that:
\[
\lim_{r \to \infty} \sup_{\theta \in (-\pi,\pi)} r \left| {\bf u}_{\mu}(r,\theta) - {\bf u}_{ref}[\phi_0,\mu](r,\theta) \right| = 0.
\]
\end{itemize}
\end{theorem}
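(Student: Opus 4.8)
\emph{Proof strategy.}

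Since ${\bf u}_{ref}[\phi_0,\mu]$ is irrotational and divergence free, it is an \emph{exact} solution of \eqref{eq_NS2D} with pressure $-\frac12|{\bf u}_{ref}[\phi_0,\mu]|^2$. The plan is to look for a solution of the form ${\bf u}={\bf u}_{ref}[\phi_0,\mu]+{\bf w}$, $p=-\frac12|{\bf u}_{ref}[\phi_0,\mu]|^2+q$, where $\mu$ is a parameter kept close to $\mu_0$ — arbitrary when $\phi_0>2$, to be determined along the way when $\phi_0\le2$. Subtracting the reference flow, $({\bf w},q)$ solves
\begin{equation}\label{eq_pert}
-\Delta{\bf w}+({\bf u}_{ref}\cdot\nabla){\bf w}+({\bf w}\cdot\nabla){\bf u}_{ref}+\nabla q=-({\bf w}\cdot\nabla){\bf w},\qquad{\rm div}\,{\bf w}=0
\end{equation}
in $\mathbb R^2\setminus\overline B$ (with ${\bf u}_{ref}={\bf u}_{ref}[\phi_0,\mu]$), with the \emph{small} boundary trace ${\bf w}|_{\partial B}={\bf v}^*+(\mu_0-\mu){\bf e}_\theta$, with ${\bf w}\to0$ at infinity, and the required asymptotics now reading ${\bf w}=o(1/r)$ uniformly in $\theta$. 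Taking the curl and using ${\rm curl}\,{\bf u}_{ref}=0$, the vorticity $\tilde\omega:={\rm curl}\,{\bf w}$ solves a scalar transport--diffusion equation $-\Delta\tilde\omega+({\bf u}_{ref}\cdot\nabla)\tilde\omega=-{\rm curl}\big(({\bf w}\cdot\nabla){\bf w}\big)$ that is \emph{linear} in $\tilde\omega$: the term $({\bf w}\cdot\nabla)\,{\rm curl}\,{\bf u}_{ref}$, which normally obstructs linearization around a rotational background, is absent. Then ${\bf w}$ is recovered from $\tilde\omega$, the boundary trace and the circulation through a stream function ($\Delta\psi=\tilde\omega$, ${\bf w}=\nabla^\perp\psi$). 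This is the ``linearization around the potential flow'' referred to in the introduction.

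The core is the linear theory for ${\bf w}\mapsto\big(-\Delta{\bf w}+({\bf u}_{ref}\cdot\nabla){\bf w}+({\bf w}\cdot\nabla){\bf u}_{ref}+\nabla q,\,{\bf w}|_{\partial B}\big)$. Expanding in a Fourier series in $\theta$ and reducing each mode to its stream function, the $n$-th mode obeys a fourth-order ODE in $r$, equidimensional up to the first-order transport term, with indicial exponents $\pm n$ and $\alpha^{(n)}_\pm+2$, where $\alpha^{(n)}_\pm=\frac12\big(-\phi_0\pm\sqrt{\phi_0^2+4n^2+4in\mu}\big)$. For each $n$ one counts the homogeneous solutions decaying like $o(1/r)$: for $|n|\ge2$ there are always exactly two, matching the two conditions at $r=1$; for $n=0$ one gets the scalar equation for $w_\theta$, whose two homogeneous solutions are the tangential Hamel profiles $r^{-1}{\bf e}_\theta$ and $r^{1-\phi_0}{\bf e}_\theta$, the latter being $o(1/r)$ exactly when $\phi_0>2$ — this single dichotomy is why $\mu$ is free when $\phi_0>2$ (it is the amplitude of the $r^{1-\phi_0}$ mode) and must absorb a scalar solvability condition when $\phi_0\le2$; and for $n=\pm1$ there are exactly two $o(1/r)$ solutions if and only if ${\rm Re}\,\alpha^{(1)}_-<-2$, i.e. ${\rm Re}\sqrt{\phi_0^2+4+4i\mu}>4-\phi_0$, and an elementary computation of that real part shows this is automatic for $\phi_0\ge3/2$ and equivalent to $|\mu|>(4-\phi_0)\sqrt{3-2\phi_0}$ otherwise — precisely the (open) condition \eqref{eq_conditionphimu_intro}, which therefore persists for $\mu$ near $\mu_0$. (The resonances where indicial exponents collide occur only for $\mu=0$ and do not change the decay count.)

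Granting this, with estimates uniform in $n$ (for $|n|$ large the transport term is a relatively bounded perturbation of the Laplacian) one sums the Fourier series and obtains that \eqref{eq_pert} linearized around ${\bf u}_{ref}[\phi_0,\mu]$ is, on a suitable weighted function space $X$ (of Kondratiev type, adapted to the $1/r$ scaling) of fields decaying like $o(1/r)$, an isomorphism onto the data space when $\phi_0>2$ (for each fixed $\mu$), and has one-dimensional cokernel — the $n=0$ solvability condition — when $\phi_0\le2$. The nonlinearity is then closed by a fixed-point argument: the product estimate $\|({\bf w}\cdot\nabla){\bf w}\|_Y\lesssim\|{\bf w}\|_X^2$ holds into the space $Y$ of admissible forcings because the two inversions (of the vorticity equation and of $\Delta\psi=\tilde\omega$) gain enough decay to beat the loss in the product, and the boundary contribution ${\bf v}^*+(\mu_0-\mu){\bf e}_\theta$ is small, so \eqref{eq_pert} is solved in a small ball of $X$ by the Banach fixed-point theorem. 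When $\phi_0>2$ this is done for every $\mu$ near $\mu_0$, yielding $({\bf u}_\mu,p_\mu)$. When $\phi_0\le2$, the scalar constraint reads $\mu=\mu_0+G({\bf v}^*,{\bf w})$ with $G$ Lipschitz, small, and vanishing at ${\bf v}^*=0$, so one solves jointly for $({\bf w},\mu)$ near $(0,\mu_0)$ by a Lyapunov--Schmidt/contraction argument, obtaining $\mu$ close to $\mu_0$ (equal to $\mu_0$ when ${\bf v}^*=0$). Interior and boundary elliptic regularity upgrade the solution to a smooth one, and the weighted bound on ${\bf w}$ yields the stated limit at infinity.

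The main obstacle is this linear step, and within it the critical modes $n=0,\pm1$: one must determine the exact number of $o(1/r)$-decaying homogeneous solutions each of them carries and check that it is governed precisely by \eqref{eq_conditionphimu_intro} — this is where the curve $|\mu_0|=(4-\phi_0)\sqrt{3-2\phi_0}$ and the thresholds $\phi_0=3/2$ and $\phi_0=2$ come from — while simultaneously choosing a function space in which the borderline $o(1/r)$ decay is captured with bounds summable over $n$ and the quadratic nonlinearity still closes.
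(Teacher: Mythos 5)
Your strategy coincides with the paper's own proof: both linearize around the potential flow ${\bf u}_{ref}[\phi_0,\mu]$, pass to the stream-function/vorticity formulation, expand in angular Fourier modes, read off the decay requirement $\Re(\zeta_1^-)<-2$ (equivalently \eqref{eq_conditionphimu_intro}) from the $n=\pm1$ modes, exploit the dichotomy at $\phi_0=2$ for the zero-mode Green function $r^{-\phi_0}$ to decide whether $\mu$ is a free parameter or must absorb the scalar solvability condition, and close with a Banach fixed point in weighted spaces followed by an adjustment of $\mu$ in the case $\phi_0\le2$. The only slips are cosmetic: the claim of ``always exactly two'' decaying homogeneous solutions for $|n|\ge2$ holds under the standing condition (by monotonicity of $|\Re\zeta_n^-|$ in $|n|$) rather than unconditionally, and at the borderline $\phi_0=3/2$ one still needs $\mu_0\neq0$, consistent with the strict inequality in \eqref{eq_conditionphimu_intro}.
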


The condition \eqref{eq_conditionphimu_intro} is reminiscent of the sufficiently fast rotation assumption introduced in \cite{HillairetWittwer}. It is based on the computation of the linearized \eqref{eq_NS2D} around a profile $({\bf u}_{ref}[\phi_0,\mu], p_{ref}[\phi_0,\mu] = -|{\bf u}_{ref}[\phi_0,\mu]|^2/2)$ (note that $\mu$ is not necessarily $\mu_0$).  As in \cite{HillairetWittwer}, this assumption shall ensure that Green functions associated with the linearized problem decay sufficiently fast to be combined with the nonlinearity.  We emphasize that, in case $\phi_0=0$ we recover
the condition
$
|\mu_0| > 4\sqrt{3}
$
which was the condition derived in\cite{HillairetWittwer}. The main originality of this existence result is the case $\phi_0 > 2$ where we obtain a one-parameter family of solutions for many given boundary data. Briefly, this innovation is permitted because we do not face the non-invertibility of the Stokes problem when we consider the linearized Navier-Stokes system around $({\bf u}_{ref}[\phi_0,\mu], p_{ref}[\phi_0,\mu])$ with $\phi_0$ sufficiently large. We can then implement a classical perturbation analysis and obtain existence of a solution for boundary data close to ${\bf u}_{ref}[\phi_0,\mu]$. Furthermore, we can play with the parameter $\mu:$ the same boundary data ${\bf u}^*$ can read ${\bf u}_{ref}[\phi_0,\mu] + {\bf v}$ for various values of $\mu$ (close to $\mu_0$). This is possible since the mapping $\mu \mapsto {\bf u}_{ref}[\phi_0,\mu] $ is continuous in any $H^m(\partial B)$-space. In this respect, our result is a non-trivial extension of the existence result in \cite{Higaki} where this non-uniqueness property was only mentioned for the spiral solutions of Hamel. We obtain herein that this non-uniqueness property is generic. The key role of the circulation parameter $\mu$ has already been identified in the Cauchy theory for unbounded-energy solutions to the time-dependent Navier-Stokes equations in exterior domains \cite{FeHi,GalMa,Lacave}.  

Once we have a one-parameter family of solutions, a natural issue is to find a criterion that enables to discriminate between these solutions. Following the remark ii) above, a natural candidate is to prescribe 
$
\lim_{r \to \infty} r u_{\theta}(r,\theta)
$ 
or equivalently to prescribe the value of $\mu$ of the leading order ${\bf u}_{ref}[\phi_0,\mu]$ of the solution. Our main result is the following:
\begin{theorem} \label{thm_uniqueness}
	Let $\phi_0 \in (21/10,3]$ and $\beta_0>0.$ There exists $\varepsilon(\phi_0,\beta_0) > 0$  such that the following statement holds true. 
	
	If $\mu \in \mathbb R$ and $({\bf u}_i,p_i) \in C^{\infty}(\mathbb R^2\setminus \overline{B}) \times C^{\infty}(\mathbb R^2 \setminus \overline{B})$ for $i=a,b$ are solutions to \eqref{eq_NS2D} with the same ${\bf u}^{*} \in C^{\infty}(\partial B)$ and such that ${\bf v}_i = {\bf u}_i - {\bf u}_{ref}[\phi_0,\mu]$ satisfies:
		\begin{align} \notag 
			 \sup_{|x| \geq 1} \left[ |x|^{1+\beta_0} (|{\bf
					v}_a(x)|+|{\bf v}_b(x)| \right.  & + |x| (|\nabla {\bf v}_a(x)| + |\nabla {\bf v}_{b}(x)|) 				\\
					& \left. + 
					|x|^{2} (|\nabla^2 {\bf v}_a(x)| + |\nabla^2 {\bf v}_{b}(x)|) \right] + |\mu| < \varepsilon(\phi_0,\beta_0),
		\label{eq_unicite}	
		\end{align}
then ${\bf u}_{b} = {\bf u}_a.$
\end{theorem}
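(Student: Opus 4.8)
The strategy is to subtract the two equations, recognize the difference as a solution of the linearized Navier-Stokes system around the potential flow ${\bf u}_{ref}[\phi_0,\mu]$ driven by a forcing that is bilinear in the (small) perturbations, and then to absorb that forcing into the left-hand side by means of the linear a priori estimate that lies at the core of the paper. Concretely, set ${\bf w}={\bf u}_a-{\bf u}_b={\bf v}_a-{\bf v}_b$ and $q=p_a-p_b$. Writing the convective terms as $({\bf u}_a\cdot\nabla){\bf u}_a-({\bf u}_b\cdot\nabla){\bf u}_b=({\bf u}_a\cdot\nabla){\bf w}+({\bf w}\cdot\nabla){\bf u}_b$ and then substituting ${\bf u}_i={\bf u}_{ref}[\phi_0,\mu]+{\bf v}_i$, one gets that $({\bf w},q)$ solves
\begin{equation*}
\Delta{\bf w}-({\bf u}_{ref}[\phi_0,\mu]\cdot\nabla){\bf w}-({\bf w}\cdot\nabla){\bf u}_{ref}[\phi_0,\mu]-\nabla q=({\bf v}_a\cdot\nabla){\bf w}+({\bf w}\cdot\nabla){\bf v}_b=:{\bf F}[{\bf w}],
\end{equation*}
together with ${\rm div}\,{\bf w}=0$ in $\mathbb R^2\setminus\overline B$, the homogeneous boundary condition ${\bf w}=0$ on $\partial B$ (both solutions carry the same datum ${\bf u}^*$), and ${\bf w}\to 0$ at infinity.

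Next I would do the weighted bookkeeping. Let $X_\beta$ carry the norm $\sup_{|x|\ge 1}\big(|x|^{1+\beta}|{\bf w}|+|x|^{2+\beta}|\nabla{\bf w}|+|x|^{3+\beta}|\nabla^2{\bf w}|\big)$ --- exactly the weights appearing in \eqref{eq_unicite} --- and let $Y_\beta$ be the corresponding space for the forcing, in which the solution operator of the linearized system is bounded (weights shifted by two powers of $|x|$, possibly with a H\"older seminorm, according to the conventions of the linear theory). By \eqref{eq_unicite}, the fields ${\bf v}_a$, ${\bf v}_b$ and ${\bf w}$ all lie in $X_{\beta_0}$ with norm $O(\varepsilon)$, and $|\mu|<\varepsilon$. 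Fix $\beta:=\min(\beta_0,\bar\beta(\phi_0))$, where $\bar\beta(\phi_0)>0$ is the admissible decay threshold of the linear theory; since $|x|^{-1-\beta_0}\le|x|^{-1-\beta}$ for $|x|\ge 1$, the same bounds hold in $X_\beta$. Expanding ${\bf F}[{\bf w}]$ and $\nabla{\bf F}[{\bf w}]$ by the Leibniz rule and pairing each factor that carries an $\varepsilon$ and a faster decay with the factor controlled by $\|{\bf w}\|_{X_\beta}$, every resulting term produces a leftover power $|x|^{-\beta_0}\le 1$, so that
\begin{equation*}
\|{\bf F}[{\bf w}]\|_{Y_\beta}\le C(\phi_0,\beta_0)\,\big(\|{\bf v}_a\|_{X_{\beta_0}}+\|{\bf v}_b\|_{X_{\beta_0}}\big)\,\|{\bf w}\|_{X_\beta}\le C'(\phi_0,\beta_0)\,\varepsilon\,\|{\bf w}\|_{X_\beta}.
\end{equation*}
This is the step where the Hessian bounds in \eqref{eq_unicite} are used: they put ${\bf w}$, ${\bf v}_a$, ${\bf v}_b$ in the weighted $C^2$ class on which the linear estimate operates and they let $\nabla{\bf F}[{\bf w}]$ be placed in $Y_\beta$.

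Finally I would invoke the linear a priori estimate. The assumption $\phi_0\in(21/10,3]$ (for which \eqref{eq_conditionphimu_intro} holds --- in fact for every $\mu$, since $\phi_0>3/2$) guarantees, through the Fourier-mode analysis of the linearized system around ${\bf u}_{ref}[\phi_0,\mu]$ that underlies Theorem \ref{thm_existence_intro}, that any solution of that system with homogeneous boundary data, vanishing at infinity, and forcing in $Y_\beta$ obeys $\|{\bf w}\|_{X_\beta}\le C_{lin}(\phi_0,\mu,\beta)\,\|{\bf F}[{\bf w}]\|_{Y_\beta}$, with $C_{lin}$ bounded uniformly for $\mu$ in a fixed neighborhood of $0$ (by continuity in $\mu$ of the construction). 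Combining the two displays gives $\|{\bf w}\|_{X_\beta}\le C_{lin}\,C'\,\varepsilon\,\|{\bf w}\|_{X_\beta}$; choosing $\varepsilon(\phi_0,\beta_0)$ small enough that this constant is $<1$ forces $\|{\bf w}\|_{X_\beta}=0$, that is ${\bf u}_a={\bf u}_b$.

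The main obstacle is the linear a priori estimate itself: showing that the linearized Navier-Stokes operator around the potential flow ${\bf u}_{ref}[\phi_0,\mu]$ is invertible, with a bound under control uniformly for small $\mu$, exactly in the window $\phi_0\in(21/10,3]$ and in the weighted $C^2$-type spaces matched to \eqref{eq_unicite}; this is the delicate technical part of the paper and is where the numerology of the interval $(21/10,3]$ comes from. A secondary point to watch is that the exponent $\beta$ at which one applies the estimate must be taken $\le\beta_0$ (and $\le\bar\beta(\phi_0)$), so that the bilinear forcing ${\bf F}[{\bf w}]$ genuinely gains a power of $\varepsilon$ relative to $\|{\bf w}\|_{X_\beta}$, which is what closes the absorption argument.
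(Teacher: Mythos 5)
Your reduction to the system for ${\bf w}={\bf u}_a-{\bf u}_b$ with the bilinear forcing ${\bf F}[{\bf w}]=({\bf v}_a\cdot\nabla){\bf w}+({\bf w}\cdot\nabla){\bf v}_b$ and the smallness bookkeeping in the weighted spaces are fine. The gap is the linchpin of your argument: the linear a priori estimate $\|{\bf w}\|_{X_\beta}\le C_{lin}\|{\bf F}[{\bf w}]\|_{Y_\beta}$ for \emph{arbitrary} solutions of the linearized system with homogeneous boundary data and decay $O(|x|^{-1-\beta})$. You assert that this follows from ``the Fourier-mode analysis that underlies Theorem \ref{thm_existence_intro}'', but that analysis (Lemma \ref{lem_analysislinearproblem}) only constructs a particular bounded right inverse $\mathcal S_{\mu,\phi_0}$ of the linearized operator, obtained by discarding by hand the homogeneous solutions that decay too slowly. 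A right inverse gives existence, not an a priori bound on every solution: to upgrade it you must prove that the linearized problem has trivial kernel in $X_\beta$, i.e.\ classify, mode by mode, all decaying solutions of the homogeneous ODEs and check that the boundary conditions kill them. This is precisely where the Hamel degeneracy lives: for the $0$-mode only one boundary condition is available for two free constants, and the homogeneous velocity profile $r^{1-\phi_0}{\bf e}_\theta$ decays like $|x|^{-1-(\phi_0-2)}$, hence \emph{does} belong to $X_\beta$ whenever $\beta\le\phi_0-2$ -- it is excluded only by the boundary condition, not by the decay, and this must be argued. There are also genuine resonances inside your window (e.g.\ $\zeta_1^-+2=-1$ at $\phi_0=8/3$, $\mu=0$) where the Green functions degenerate to $r^{-1}\ln r$. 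None of this is automatic, and it is essentially the entire content of the theorem.

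A secondary but telling point: your claim that the numerology of $(21/10,3]$ ``comes from'' the invertibility of the linearized operator is not consistent with where these constants actually arise. The paper does not prove your linear estimate at all; it runs a mode-split energy method. The bound $\phi_0\le 3$ is needed for positivity of the quadratic form $Q_1$ (the coefficient $3-\phi_0$ in the proof of Proposition \ref{prop_largemodes}), and $\phi_0>21/10$ is needed in the bootstrap of Proposition \ref{prop_decaymode1} to force the non-zero modes of ${\bf v}_a,{\bf v}_b$ to decay strictly faster than $|x|^{-3/2}$ so that the trilinear terms close. Your scheme bypasses both mechanisms, which means the restriction to $(21/10,3]$ would have to be re-derived from the spectral data $\zeta_n^\pm$ alone; since \eqref{eq_conditionphimu_intro} is already satisfied for every $\phi_0>3/2$, your argument as written gives no reason why the result should fail for $\phi_0>3$, which is a sign that the decisive estimate has not actually been engaged. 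To make your route work you would need to supply the kernel-triviality/classification argument above (including the resonant cases and the angular regularity needed to place ${\bf F}[{\bf w}]$ in the spaces $\mathcal B_{4+2\alpha,\kappa+1}$, which a pointwise $C^2$ bound does not directly give).
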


In practice, $\mathbf{u}_a$ is a solution such as the one we construct in the existence part. The second solution $\mathbf{u}_b$ can be any solution. In the above theorem, we require this solution to be smooth, but it is classical that weak solutions are as smooth as the boundary data thanks to the ellipticity of the Stokes system. So this is no restriction when boundary data are smooth. Assumption \eqref{eq_unicite} means that the perturbations ${\bf v}_a$ and ${\bf v}_b$ decay faster than $1/|x|$ at infinity.  This is mandatory since otherwise our assumptions would allow to compare solutions that we constructed with asymptotic profiles ${\bf u}_{ref}[\phi_0,\tilde{\mu}]$ with different values for $\tilde{\mu}.$ In contrast with the uniqueness result in \cite{Higaki}, one strength of our result is furthermore that we allow the faster decay of the perturbation to be arbitrary small (we have no size-restriction on $\beta_0$).

The proof of {\bf Theorem \ref{thm_uniqueness}} is mainly based on an energy method. However, several difficulties pave the way to the result. Indeed, let fix a pair of solutions $({\bf u}_a,p_a)$ and $({\bf u}_b,p_b)$ and introduce
\begin{equation*}
	\mathbf{w}=\mathbf{u}_{a}-\mathbf{u}_{b},\qquad q=p_{a}-p_{b}.
\end{equation*}%
The difference satisfies
\begin{eqnarray}	\label{w equintro}
	\left( \mathbf{w}\cdot \nabla \right) \mathbf{u}_{a}+\left( \mathbf{u}%
	_{b}\cdot \nabla \right) \mathbf{w}+\nabla q &=&\Delta \mathbf{w,}
 \\
	\text{div }\mathbf{w} &\mathbf{=}&0,  \notag
\end{eqnarray}
with vanishing boundary conditions on $\partial B.$ If one multiplies this system directly with ${\bf w},$ it yields after integration by parts:
\[
\int_{\mathbb R^2 \setminus \overline{B}} |\nabla {\bf w}|^2 {\rm d}x
= -\int_{\mathbb R^2 \setminus \overline{B}} {\bf w} \cdot \nabla {\bf u}_a \cdot {\bf w}{\rm d}x.
\]
Because of the leading term in ${\bf u}_a$ the right-hand side contains a priori a term like
\[
\int_{\mathbb R^2 \setminus \overline{B}} \left( \phi_0 + \mu \right) \dfrac{|{\bf w}|^2}{|x|^2}{\rm d}x.
\]
However, on the one hand, we have no Hardy inequality in exterior $2D$ domains that would allow to control this term with the left-hand side. On the other hand, there is no hope to make this term small since $\phi_0$ is not arbitrary small in our setting. 
Hence, we need to go into more details when computing this right-hand side. Firstly, we will extract the terms depending on $\phi_0$ that we will be able to combine with the left-hand side to construct a positive quadratic form. Secondly, we will work on the defaults that make the Hardy inequality
invalid in the 2D setting. It turns out that in this case again, we can play with radial coordinates and Fourier expansions in the angular variable. We will observe that the default of the Hardy inequality reduces to the 0 and first frequencies that we will have then to handle differently in comparison with larger frequencies. Moreover, we will observe that the nonlinearity is compatible with this splitting: the perturbation arising in the energy estimate for the $0$-mode does not depend on the $0$-mode. This will enable to buckle a contraction argument. 
In passing, to handle the nonlinearity, we will have to prove by a bootstrap argument that the non-zero modes of the perturbation ${\bf v}_a$ and ${\bf v}_b$ have to decay faster than expected. Details are provided in {\bf Section \ref{sec_uniqueness}}.

To make this method run, our assumptions contain two main restrictions. Firstly, we require the perturbations ${\bf v}_a$ and ${\bf v}_b$ to be small in appropriate weighted spaces. Asymptotically this has no impact (up to change $\beta_0$ for a smaller value $\beta'_0$). But we require here that in the bulk close to $B$ the perturbation is also small. Such a property is known for the solution ${\bf u}_a$ up to assume that the boundary data is a small perturbation of a reference flow. But it is still open for the arbitrary solution ${\bf v}_b.$  Even for small boundary data, standard a priori estimate techniques allow the existence of a ``large" perturbation. Secondly, {our uniqueness result is valid only for $\phi_0 \in (21/10,3].$} The assumption $\phi_0 \leq 3$ is fundamental: it allows the positivity of the quadratic form underlying our contraction argument. We expect that this restriction is related to a new bifurcation in the branch of solutions.
We could relax the assumption $\phi_0 > 21/10$  by imposing that the assumed faster decay is sufficiently strong (namely we could assume $\phi_0 > 2$ and $\beta_0$ sufficiently large). The restriction $\phi_0 > 21/10$ appears in the bootstrap argument. It enforces again that the Green functions -- associated with the linearized system around an asymptotic profile -- decay sufficiently fast to make the solutions ${\bf v}_b$ decay also sufficiently fast. We can then control trilinear terms arising in energy estimates by the positive-definite quadratic form at hand (a pseudo $\dot{H}^1$-norm).

The outline of the paper is as follows. In the next section, we prove our existence result. Since computations are very close to previous computations in \cite{HillairetWittwer} we will stick to the main new ingredients and recall the computations of  \cite{HillairetWittwer} for most technical parts. The core of the paper is the {\bf Section \ref{sec_uniqueness}} where we obtain our uniqueness result. Technical details (and especially the bootstrap argument involved in the proof of {\bf Theorem \ref{thm_uniqueness}}) are given in appendix.

\medskip

{\bf Acknowledgement.} The authors would like to thank Peter Wittwer for inspiring discussions at the origin of this study. The second author acknowledges support of Institut Universitaire de France.



\section{Existence theory} \label{sec_existence}
In this section, we provide a proof of {\bf Theorem \ref{thm_existence_intro}}. Our construction method follows closely the approach of \cite{HillairetWittwer} using the stream function/vorticity representation of the Navier-Stokes equations. We recall briefly the method for completeness and refer the reader to this previous contribution for further computations. 

Let ${\bf u}^* \in C^{\infty}(\partial B)$ and  $({\bf u},p)$ be an {\em a priori} associated smooth solution to \eqref{eq_NS2D}. We introduce:
\begin{equation} \label{eq_flux}
\phi_0 :=  \dfrac{1}{2\pi}\int_{\partial B} {\bf u}^* \cdot {\bf n} {\rm d}\sigma, \qquad \mu_0 := \dfrac{1}{2\pi} \int_{\partial B} {\bf u}^* \cdot {\bf n}^{\bot}{\rm d}\sigma,
\end{equation}
and we look for a solution close in the vicinity of $({\bf u}_{ref}[\phi_0,\mu],\, p_{ref}[\phi_0,\mu])$ where $\mu$ is close to $\mu_0.$ We split then ${\bf u} = {\bf u}_{ref}[\phi_0,\mu] + \tilde{{\bf u}},$ $p = p_{ref}[\phi_0,\mu] + \tilde{p}$ where $(\tilde{{\bf u}},\tilde{p})$ solves:
\begin{equation} \label{eq_NS2Dpertur}
\left\{
\begin{aligned}
& \left( {\bf \tilde{u}} \cdot \nabla\right) {\bf \tilde{u}} + {\bf u}_{ref}[\phi_0,\mu] \cdot \nabla {\bf \tilde{u}} + {\bf \tilde{u}} \cdot \nabla  {\bf {u}}_{ref}[\phi_0,\mu] + \nabla \tilde{p}  =  \Delta {\bf \tilde{u}}, && \text{in $\mathbb R^2 \setminus  \overline{B}$},\\
& {\rm div } \tilde{\bf u}  = 0, && \text{in $\mathbb R^2 \setminus  \overline{B}$}, \\
& \tilde{\bf u} = {\bf u}^*  - \left( \mu {\bf e}_{\theta} - \phi_0 {\bf e}_r \right), && \text{on $\partial B$}, \\
& \tilde{\bf u} = 0, && \text{at  infinity.}
\end{aligned}
\right.
\end{equation} 
Moreover, $\tilde{\bf u}$ is smooth and divergence-free on $\mathbb R^2 \setminus \overline{B}.$ Since it prescribes zero-flux on $\partial B$, it can be extended into a continuous piece-wise smooth divergence-free vector-field on $\mathbb R^2.$ We then find $\gamma \in  C^{\infty}(\mathbb R^2\setminus \overline{B})$ such that  $\tilde{\bf u} = \nabla^{\bot} \gamma.$  We take then the curl of the first equation in \eqref{eq_NS2Dpertur}, and consider the equations satisfied by the unknown
$\gamma$ together with $w = \nabla \times {\bf \tilde{u}} := \partial_1 \tilde{u}_2 - \partial_2 \tilde{u}_1.$ Going to radial coordinates, we have that $(\gamma,w)$ are $2\pi$-periodic solutions in $\theta$ for $r \in (1,\infty)$ to:
\begin{equation}
\left\{ 
\begin{aligned}
\partial _{rr}\gamma +\dfrac{1}{r}\partial _{r}\gamma +\dfrac{1}{r^{2}}%
\partial _{\theta \theta }\gamma & =-w,  \\
\partial _{rr}w+\dfrac{\left( \phi_0 +1\right) }{r}\partial _{r}w+\dfrac{1}{r^{2}}\partial _{\theta \theta }w-\dfrac{\mu }{r^{2}}\partial _{\theta }w & =\dfrac{\partial _{\theta }\gamma }{r}\partial _{r}w-\dfrac{\partial _{r}\gamma }{r} \partial _{\theta }w,
\end{aligned}
\right.
\label{gamma w}
\end{equation}
with the following periodic boundary conditions in $\theta$
\begin{equation}
\left\{ 
\begin{aligned}
\partial _{\theta }\gamma (1,\theta )=u_{r}^{\ast }(\theta )+\phi_0, &&  \text{ on $\mathbb T,$}  \\ 
\partial _{r}\gamma (1,\theta )=\mu -u_{\theta }^{\ast }(\theta ), && \text{ on $\mathbb T,$}  \\ 
\lim_{r\rightarrow \infty }\left( |\gamma (r,\theta )|+|\partial _{r}\gamma
(r,\theta )|\right) =0,&& \text{ on $\mathbb T,$} 
\end{aligned}
\right.
\label{bound gamma}
\end{equation}
where we denote $\mathbb T = \mathbb R /2\pi \mathbb Z$ the  1d-torus. 
Before constructing solutions to \eqref{gamma w}-\eqref{bound gamma}, we recall their relations with \eqref{eq_NS2D} in the following lemma:

\begin{lemma}
Assume $(\gamma,w) \in C^{\infty}((1,\infty) \times \mathbb T),$ satisfies \eqref{gamma w} and 
\begin{equation} \label{eq_conditiongamma}
\exists \, \alpha >0  \quad \text{ s.t. } \quad \sup_{(1,\infty) \times \mathbb R} \left(\sum_{k=0}^{3} r^{\alpha+k}|\nabla^k \gamma(r,\theta)|\right)  < \infty.
\end{equation}
There holds $\mathbf{u} = \nabla^{\bot} \gamma + {\bf u}_{ref}[\phi_0,\mu] \in C^{\infty}(\mathbb R^2 \setminus \overline{B})$
and there exists $p \in C^{\infty}(\mathbb R^2 \setminus \overline{B})$ for which 
\[
\left\{
\begin{aligned}
- \Delta {\bf u} + {\bf u} \cdot \nabla {\bf u} + \nabla p &= 0 && \text{in $\mathbb R^2 \setminus \overline{B},$}\\
{\rm div} {\bf u} &= 0 && \text{in $\mathbb R^2 \setminus \overline{B}.$}
\end{aligned}
\right.
\]
\end{lemma}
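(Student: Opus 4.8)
The plan is to verify separately that $\mathbf{u}:=\nabla^{\bot}\gamma+\mathbf{u}_{ref}[\phi_0,\mu]$ is a smooth divergence-free field, and that the force $\mathbf{f}:=\Delta\mathbf{u}-(\mathbf{u}\cdot\nabla)\mathbf{u}$ is the gradient of a single-valued smooth pressure. The first point is immediate: $\gamma\in C^{\infty}((1,\infty)\times\mathbb{T})$ gives $\nabla^{\bot}\gamma\in C^{\infty}(\mathbb{R}^2\setminus\overline{B})$ with $\operatorname{div}\nabla^{\bot}\gamma=0$, while $\mathbf{u}_{ref}[\phi_0,\mu]=\phi_0\nabla(-\ln r)+\mu\nabla^{\bot}(-\ln r)$ is smooth and divergence-free away from the origin; being built from harmonic potentials it satisfies $\Delta\mathbf{u}_{ref}[\phi_0,\mu]=0$ and, as a planar potential flow, $(\mathbf{u}_{ref}[\phi_0,\mu]\cdot\nabla)\mathbf{u}_{ref}[\phi_0,\mu]=-\nabla p_{ref}[\phi_0,\mu]$ with $p_{ref}=-|\mathbf{u}_{ref}[\phi_0,\mu]|^2/2$ (a one-line computation in polar coordinates, or the remark in the Introduction). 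The decay hypothesis \eqref{eq_conditiongamma} is only invoked at the very end.

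The core is to produce the pressure, for which I would first show $\nabla\times\mathbf{f}=0$. Setting $w:=\nabla\times\mathbf{u}$, one has $\nabla\times\mathbf{u}=\nabla\times\nabla^{\bot}\gamma+\nabla\times\mathbf{u}_{ref}[\phi_0,\mu]=-\Delta\gamma+0$, which equals the $w$ of \eqref{gamma w} by its first line. Since $\mathbf{u}$ is divergence-free, the planar identity $\nabla\times\big((\mathbf{u}\cdot\nabla)\mathbf{u}\big)=(\mathbf{u}\cdot\nabla)(\nabla\times\mathbf{u})$ together with $\nabla\times\Delta\mathbf{u}=\Delta(\nabla\times\mathbf{u})$ gives $\nabla\times\mathbf{f}=\Delta w-(\mathbf{u}\cdot\nabla)w$. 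It then remains to recognize that the second line of \eqref{gamma w} is precisely this vanishing: in polar coordinates $\Delta w+\tfrac{\phi_0}{r}\partial_r w-\tfrac{\mu}{r^2}\partial_\theta w=\Delta w-(\mathbf{u}_{ref}[\phi_0,\mu]\cdot\nabla)w$ and $\tfrac{\partial_\theta\gamma}{r}\partial_r w-\tfrac{\partial_r\gamma}{r}\partial_\theta w=(\nabla^{\bot}\gamma\cdot\nabla)w$, so \eqref{gamma w} reads $\Delta w=(\mathbf{u}\cdot\nabla)w$, i.e.\ $\nabla\times\mathbf{f}=0$.

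To pass from $\nabla\times\mathbf{f}=0$ to $\mathbf{f}=\nabla p$ with $p$ single-valued and smooth I must rule out a period, since $\mathbb{R}^2\setminus\overline{B}$ is doubly connected: it is enough that $\oint_{|x|=R}\mathbf{f}\cdot\mathbf{t}\,{\rm d}\sigma=0$ for one (hence, by $\nabla\times\mathbf{f}=0$ and Stokes' theorem on an annulus, every) $R>1$. Here I would use the splitting $\mathbf{u}=\mathbf{u}_{ref}[\phi_0,\mu]+\tilde{\mathbf{u}}$ with $\tilde{\mathbf{u}}=\nabla^{\bot}\gamma$; since $\Delta\mathbf{u}_{ref}[\phi_0,\mu]=0$ and $(\mathbf{u}_{ref}[\phi_0,\mu]\cdot\nabla)\mathbf{u}_{ref}[\phi_0,\mu]=-\nabla p_{ref}[\phi_0,\mu]$,
\[
\mathbf{f}=\nabla p_{ref}[\phi_0,\mu]+\underbrace{\Delta\tilde{\mathbf{u}}-(\mathbf{u}_{ref}[\phi_0,\mu]\cdot\nabla)\tilde{\mathbf{u}}-(\tilde{\mathbf{u}}\cdot\nabla)\mathbf{u}_{ref}[\phi_0,\mu]-(\tilde{\mathbf{u}}\cdot\nabla)\tilde{\mathbf{u}}}_{=:\,\mathbf{g}}.
\]
The circulation of $\nabla p_{ref}[\phi_0,\mu]$ vanishes, and from \eqref{eq_conditiongamma} one has $|\nabla^k\tilde{\mathbf{u}}|\lesssim r^{-\alpha-1-k}$ for $k=0,1,2$ while $|\mathbf{u}_{ref}[\phi_0,\mu]|\lesssim r^{-1}$, $|\nabla\mathbf{u}_{ref}[\phi_0,\mu]|\lesssim r^{-2}$, so each term of $\mathbf{g}$ is $O(r^{-\alpha-3})$ and $\big|\oint_{|x|=R}\mathbf{g}\cdot\mathbf{t}\,{\rm d}\sigma\big|\lesssim R^{-\alpha-2}\to 0$. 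The constant circulation of $\mathbf{f}$ is therefore $0$, the closed $1$-form $\mathbf{f}\cdot{\rm d}\ell$ is exact on $\mathbb{R}^2\setminus\overline{B}$, and a path integral of $\mathbf{f}$ defines $p\in C^{\infty}(\mathbb{R}^2\setminus\overline{B})$ with $\nabla p=\mathbf{f}$, i.e.\ $-\Delta\mathbf{u}+(\mathbf{u}\cdot\nabla)\mathbf{u}+\nabla p=0$.

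The only genuinely non-routine step is the circulation argument: curl-freeness alone does not yield a single-valued pressure on this doubly connected domain, and the decay hypothesis \eqref{eq_conditiongamma} is exactly what kills the period. Everything else — the two planar vector identities, the verification that $\mathbf{u}_{ref}[\phi_0,\mu]$ is a harmonic potential flow, and the matching of the polar system \eqref{gamma w} with $\nabla\times\mathbf{f}=0$ — is bookkeeping, though some care with the sign conventions for $\nabla^{\bot}$ and $\nabla\times$ is required to make \eqref{gamma w} line up.
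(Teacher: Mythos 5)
Your argument is correct and follows essentially the same route as the paper: curl-freeness of the momentum residual from the second equation of \eqref{gamma w}, and then elimination of the period on the doubly connected domain by a circulation-at-infinity computation using \eqref{eq_conditiongamma} (the paper phrases this as writing the residual as $\nabla p+\lambda x^{\bot}/|x|^2$ and showing $\lambda=0$). Your explicit splitting-off of $({\bf u}_{ref}\cdot\nabla){\bf u}_{ref}=-\nabla p_{ref}$, whose circulation vanishes as an exact gradient, is in fact slightly more careful than the paper's blanket decay claim for ${\bf u}\cdot\nabla{\bf u}$, but the substance is identical.
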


\begin{proof}
We follow closely the proof of \cite[Theorem 2]{HillairetWittwer}. The smoothness of ${\bf u}$ is straightforward. Then, by construction, equations \eqref{gamma w} entail that :
\[
\nabla \times \left( - \Delta {\bf u} + {\bf u} \cdot \nabla {\bf u} \right) = 0, \quad
\text{ in $\mathbb R^2 \setminus \overline{B}.$}
\]
Similarly to the construction of $\gamma$ from ${\bf u}$ (up to a rotation of angle $\pi/2$), we infer that there exists $p \in C^{\infty}(\mathbb R^2 \setminus \overline{B})$ and $\lambda \in \mathbb R$ for which:
\[
- \Delta {\bf u} + {\bf u} \cdot \nabla {\bf u} + \nabla p = \lambda \dfrac{x^{\bot}}{|x|^2}.
\]
We remark then that:
\[
\lambda = \frac{r}{2\pi}\int_{\partial B(0,r)} (- \Delta {\bf u} + {\bf u} \cdot \nabla {\bf u}) \cdot {\bf e}_{\theta} {\rm d}\sigma, \qquad \forall \, r > 1.
\]
However, by construction $\Delta {\bf u}$ and ${\bf u} \cdot \nabla {\bf u}$ decay faster than $1/r^{3}$ at infinity so that $\lambda =0.$ This ends the proof.
\end{proof}

\medskip

In what follows, all the constructed solutions $(\gamma,w)$ to \eqref{gamma w} will match the condition \eqref{eq_conditiongamma} so that the above result applies.

\subsection{Rewriting of the main equations} \label{sec_rewriting}
We now look for solutions $(\gamma,w)$ to \eqref{gamma w} in terms of Fourier series of the angular coordinate $\theta:$ 
\begin{equation*}
\gamma (r,\theta )=\sum\limits_{n\in \mathbb{Z}}\gamma _{n}(r)e^{in\theta },%
\text{ \ \ \ }w(r,\theta )=\sum_{n\in \mathbb{Z}}w_{n}(r)e^{in\theta },
\end{equation*}%
and similarly, we compute Fourier modes of the boundary conditions:
\[
v_{r,n}^{\ast } = \frac{1}{2\pi }\int_{0}^{2\pi }\left( u_{r}^{\ast
}(\theta )+\phi_0 \right) e^{-in\theta }{\rm d}\theta ,  \quad
v_{\theta ,n}^{\ast } = \frac{1}{2\pi }\int_{0}^{2\pi }\left( u_{\theta
}^{\ast }(\theta )-\mu \right) e^{-in\theta }{\rm d}\theta \,, \quad \forall \, n \in \mathbb Z.
\]
We note that, because of the convention \eqref{eq_flux} we have $v^*_{r,0} =0$ and ${v}^{*}_{\theta,0} = \mu_0 - \mu.$ 

\medskip

Plugging into \eqref{gamma w} and identifying Fourier modes transforms \eqref{eq_NS2D} into a discrete family of coupled differential systems in $(1,\infty).$
For all $n\in \mathbb Z$ we obtain:
\begin{equation} 
\left\{ 
\begin{aligned}
\partial _{rr}\gamma _{n}+\dfrac{1}{r}\partial _{r}\gamma _{n}-\dfrac{n^{2}}{%
r^{2}}\gamma _{n}=-w_{n}, && \text{ in $(1,\infty)$} \\ 
\partial _{rr}w_{n}+\frac{\left( \phi_0 +1\right) }{r}\partial _{r}w_{n}-\frac{%
in\mu +n^{2}}{r^{2}}w_{n}=F_{n}, && \text{in $(1,\infty)$}%
\end{aligned}%
\right. \label{odes}
\end{equation}%
with boundary conditions: 
\begin{equation}
\left\{ 
\begin{aligned}
in\gamma _{n}(1)& =v_{r,n}^{\ast }, \\ 
-\partial _{r}\gamma _{n}(1)& =v_{\theta ,n}^{\ast }, \\ 
\lim_{r\rightarrow \infty }\left( |\gamma _{n}(r)|+|\partial _{r}\gamma
_{n}(r)|\right) & =0,%
\end{aligned}%
\right.
\label{bound gamma n}
\end{equation}%
and where the coupling term $F_{n}$ reads:
\begin{equation}
F_{n}(r)=\frac{i}{r}\sum\limits_{k+l=n}\left( l\gamma _{l}\partial
_{r}w_{k}-\partial _{r}\gamma _{l}kw_{k}\right).  \label{fn}
\end{equation}

Given suitable $(v^*_{r,n},v^*_{\theta,n})_{n\in \mathbb Z}$ (to be made precise below), our formal reasoning to solve \eqref{odes}-\eqref{bound gamma n}-\eqref{fn} reads as follows. We will look for solutions to \eqref{odes}-\eqref{bound gamma n} such that
$F_n$ is computed from the solution itself via \eqref{fn}. This is typically a fixed-point problem. We must expect that the main difficulty to prove existence of a solution is to find a framework that enables to control the decay of solutions at infinity. To highlight such a construction,  let us forget that we have sums over the index $n$ and assume that we have polynomially decaying functions. If all $\gamma_n$ decay like $1/r^{\alpha}$ then one must expect that the $F_n$ decay like $1/r^{2\alpha+4}.$ Solving the differential system   \eqref{odes}-\eqref{bound gamma n} yields a solution $(\tilde{\gamma}_n,\tilde{w}_n)$ 
such that:
\begin{itemize}
\item $\tilde{w}_n$ splits into a Green function that decays like $r^{\zeta^-_n}$ and a solution resulting from the source term $F_n.$  One reads directly from the equation that the solution resulting from the source term decays two power less than the source term meaning $1/r^{2\alpha+2}$ and:
\begin{equation} \label{eq_zetan}
\zeta_n^{-} = - \left( \frac{\phi_0 }{2} +  \frac{1}{2}\sqrt{\phi_0 ^{2}+4(in\mu
+n^{2})} \right)
\end{equation}
\item similarly, $\tilde{\gamma}_n$ splits into a Green function that decays like $r^{-|n|}$ and a solution resulting from the source term $\tilde{w}_n.$  One reads directly from the equation that the solution resulting from the source term decays two power less than the source term meaning $r^{\max(\zeta_n^{-} +2,-2\alpha)}.$
\end{itemize}
Anticipating a recursive process, we may expect convergence of the iteration $(\gamma_n,w_n) \to (\tilde{\gamma}_n,\tilde{w}_n)$ only if the new decay (of $\tilde{\gamma}_n$) is faster than the previous one and consistent with the assumption $\alpha >0$. For this, we require that $\max (-|n|, \max (\Re(\zeta_n^-)+2, -2\alpha)) <- \alpha.$ for all $n\in \mathbb Z.$ This can be recast into the three conditions
\[ 
|n| > \alpha \text{ and } \Re(\zeta_n^-) < - (2+\alpha) \text{ and } 2 \alpha > \alpha \,, 
\] 
that must hold for all $n \in \mathbb Z.$ We note here that the exponent $\alpha >0$ 
is free and to be chosen so that these conditions are satisfied. In this respect, the last condition $2\alpha > \alpha$ is free since we must choose $\alpha > 0.$
The first  condition can never be reached for all $n\in\mathbb Z$ because of the case $n=0$. To overcome this difficulty, we proceed like in the previous reference \cite{HillairetWittwer}: we first assume that we can construct a mode $(\gamma_0,w_0)$ decaying as fast as suitable. We need then to match the above conditions only when $n \neq 0.$ Because of the explicit formula for $\zeta_n^{-},$ we remark we can choose an $\alpha > 0$ so that it holds true in case {$\Re(\zeta_1^{-}) < - 2$} that writes:
\begin{equation} \label{eq_conditionphimu}
\phi_0 > \frac 32 \qquad \text{ or } \qquad \left( \phi_0 \in \left[0, 3/2 \right] \text{ and } |\mu| > (4-\phi_0) \sqrt{3-2\phi_0} \right).  
\end{equation}

We then consider the $0$-mode. We remark that this frequency behaves differently whether $\phi_0$ is larger or smaller than $2.$  When $\phi_0 \leq 2$, we have $\Re(\zeta_0^-) \geq -2 > -2-\alpha$ (whatever $\alpha>0$). Hence, the only possible formula for a solution to \eqref{odes} that decays sufficiently fast at infinity is the one given by the nonlinearity. This one decays with a power $2\alpha > \alpha$. By choosing this only solution, we need to relax the boundary condition on $\partial_r \gamma_0(1)$ that we recover in a last time playing on the parameter $\mu$ like in \cite{HillairetWittwer}.  But one novelty arises when $\phi_0 > 2.$ Indeed, we have then $\Re(\zeta_0^{-}) < -2$ so that we can allow a Green-function of the $w_0$-equation in the solution (up to restrict the size of $\alpha$).  We are then able to fix a solution that matches the value of $\partial_r \gamma_0(1).$ So, we can solve the full problem with boundary data for any value of $\mu.$ Consequently, playing with the parameter $\mu$
yields various solutions. The fact that these solutions are different entails from their asymptotic first order when $r \to \infty.$
 
\subsection{Function spaces} 
To enter into the details of the construction, we introduce now spaces inspired of \cite{HillairetWittwer} in which we will perform our fixed-point argument.
\begin{definition}
\label{defi space}Given $\kappa >0,$ $\alpha >0$ and $m\in \mathbb{N},$ such
that $m<\kappa ,$ we set:

\begin{align*}
\mathcal{B}_{\kappa }& :=\{\hat{\varphi}^{\ast }\in \mathbb{C}^{\mathbb{Z}}%
\text{ such that }\sup_{n\in \mathbb{Z}}(1+|n|)^{\kappa }|\varphi _{n}^{\ast
}|<\infty \}, \\
\mathcal{B}_{\kappa }^{0}& :=\{\hat{\varphi}^{\ast }\in \mathcal{B}_{\kappa }%
\text{ such that }\varphi _{0}^{\ast }=0\},
\end{align*}%
and%
\begin{align*}
\mathcal{B}_{\alpha ,\kappa }& :=\{\hat{\varphi}\in (C[1,\infty );\mathbb{C}%
)^{\mathbb{Z}},\text{ such that }\sup_{n\in \mathbb{Z}}\sup_{r\in \lbrack
1,\infty )}r^{\alpha }(1+|n|)^{\kappa }|\varphi _{n}(r)|<\infty \}, \\
\mathcal{U}_{\alpha ,\kappa }^{m}& :=\{\hat{\varphi}\in (C^{m}[1,\infty );%
\mathbb{C})^{\mathbb{Z}},\text{ such that }\left( \partial _{r}^{l}\varphi
_{n}\right) _{n\in \mathbb{Z}}\in \mathcal{B}_{\alpha +l,\kappa -l},\text{
for all }0\leq l\leq m\}.
\end{align*}%
\end{definition}

These function spaces are reminiscent of weighted Sobolev spaces, and permit
to obtain sharp estimates on the decay of solutions to (\ref{odes})-(\ref{bound gamma n}). The spaces with one lower index are used for boundary data, whereas the spaces
with two lower indices (mainly $\mathcal{U}_{\alpha ,\kappa }^{m}$) will be
used for solving (\ref{odes})-(\ref{bound gamma n}). We recall that these spaces are decreasing for inclusion with continuous embeddings in the  parameters $\kappa,\alpha,m$ independently. We introduce a particular naming of solutions to \eqref{eq_NS2D} obtained through this process:

\begin{definition}
Let $\kappa \in  (0,\infty)$ and a boundary
condition $\hat{\mathbf{v}}^{*}\colon =(\hat{v}_{r}^{*},\hat{v}%
_{\theta }^{* })\in \mathcal{B}_{\kappa+5}^{0}\times \mathcal{B}%
_{\kappa+4}^{0}.$ 
Given $(\phi_0,\mu) \in [0,\infty) \times \mathbb R,$ we call $\kappa$-solution for the boundary conditions $\hat{\mathbf{v}}^{*},$ the flux $\phi_0$ and the angular velocity $\mu $ a
pair $(\hat{\gamma},\hat{w}),$ such that 

\begin{itemize}
\item $(\hat{\gamma},\hat{w}) \in \mathcal{U}_{\alpha_0,\kappa +4}^{2}\times \mathcal{U}%
_{\alpha_0+2,\kappa +3}^{2}$ for some $\alpha_0 >0,$

\item all modes of $(\hat{\gamma},\hat{w})$ solve \eqref{gamma w} with boundary condition \eqref{bound gamma n} and source term given by \eqref{fn}.
\end{itemize}
\end{definition}

In this setting, our main result reads:

\begin{theorem} \label{thm_existencekappa}
Let $\phi_0 \in [0,\infty)$ and $\mu_0 \in \mathbb R$ satisfying \eqref{eq_conditionphimu}. Given $\kappa >0,$ 
there exists $\epsilon>0$ depending on $\phi_0, \mu_0, \kappa$ and an open interval $I_{\kappa ,\mu_0}$ which
contains $\mu_{0}$ such that for arbitrary $\hat{\mathbf{v}}^{\ast }\in \mathcal{B}_{\kappa +5}^{0}\times \mathcal{B}_{\kappa +4}^{0}$
with norm smaller than $\epsilon$ the following holds true 
\begin{itemize}
\item[if] $0\leq \phi_0 \leq 2,$ there exists a circulation $\mu$ close to $\mu_0$ such that there is a $\kappa$-solution $(\hat{\gamma},\hat{w})$ for the boundary condition $\hat{\mathbf{v}}^{*}$ 
\item[if] $\phi_0 > 2,$ whatever the circulation $\mu$ close to $\mu_0,$ there exists a $\kappa$-solution $(\hat{\gamma},\hat{w})$ for the boundary condition $\hat{\mathbf{v}}^{*}.$
\end{itemize}
\end{theorem}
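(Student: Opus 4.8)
The plan is to set up a fixed-point argument in the spaces of Definition \ref{defi space}. First I would construct the relevant solution operators for the linear problem \eqref{odes}--\eqref{bound gamma n} with a \emph{given} source term $\hat F \in \mathcal{B}_{2\alpha_0+4,\kappa+3}$ (this is the decay one expects when $\hat w \in \mathcal{B}_{\alpha_0+2,\kappa+3}$ and $\hat\gamma \in \mathcal{B}_{\alpha_0,\kappa+4}$, per the heuristic of Section \ref{sec_rewriting}). For each mode $n$, the $w_n$-equation is a second-order ODE whose homogeneous solutions behave like $r^{\zeta_n^\pm}$ with $\zeta_n^-$ given by \eqref{eq_zetan}; the condition \eqref{eq_conditionphimu} guarantees $\Re(\zeta_1^-) < -2$, hence (by monotonicity of $n \mapsto \Re(\zeta_n^-)$ for $|n|\geq 1$) that for $n \neq 0$ one can pick $\alpha_0 > 0$ small with $\Re(\zeta_n^-) < -(2+\alpha_0)$ and $|n| > \alpha_0$. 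I would then write $w_n$ and $\gamma_n$ by variation of constants, discarding the growing homogeneous mode and choosing the free constant in the decaying homogeneous mode to match the boundary data. Standard ODE estimates (done mode by mode, then assembled with the $(1+|n|)^\kappa$ weights, as in \cite{HillairetWittwer}) give that the resulting map $\hat F \mapsto (\hat\gamma,\hat w)$ is bounded from $\mathcal{B}_{2\alpha_0+4,\kappa+3}$ into $\mathcal{U}^2_{\alpha_0,\kappa+4} \times \mathcal{U}^2_{\alpha_0+2,\kappa+3}$, and likewise that the boundary-data contribution is bounded from $\mathcal{B}^0_{\kappa+5}\times\mathcal{B}^0_{\kappa+4}$ into the same space.

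The $0$-mode requires separate handling, exactly as flagged in Section \ref{sec_rewriting}. Here the boundary data $\hat{\mathbf v}^*$ has vanishing $0$-component by hypothesis, but the equation \eqref{bound gamma n} for $n=0$ carries $v^*_{\theta,0} = \mu_0 - \mu$, so the circulation $\mu$ enters as an extra parameter. When $0 \leq \phi_0 \leq 2$ one has $\Re(\zeta_0^-) \geq -2$, so the only admissible $w_0$ is the particular solution driven by the nonlinearity, which decays like $r^{-2\alpha_0}$; this fixes $\gamma_0$ up to the decaying homogeneous mode, leaving $\partial_r\gamma_0(1)$ as an output rather than an input. One then solves the full nonlinear system treating $\mu$ as unknown and closes by an implicit-function/fixed-point argument in $(\hat\gamma,\hat w,\mu)$ simultaneously, recovering $\partial_r\gamma_0(1) = \mu_0 - u^*_{\theta,0} = 0$ at the end — this is the step that produces the $\mu$ "close to $\mu_0$" and the interval $I_{\kappa,\mu_0}$. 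When $\phi_0 > 2$ one has $\Re(\zeta_0^-) < -2$, so (shrinking $\alpha_0$ so that $\alpha_0 < |\Re(\zeta_0^-)| - 2$) one may keep the decaying homogeneous $w_0$-mode and use its free constant to match $\partial_r\gamma_0(1)$ for \emph{any} prescribed $\mu$; the parameter $\mu$ is then genuinely free and the solution operator is well-defined for all $\mu$ in a neighborhood of $\mu_0$.

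With the linear solution operator $\mathcal L$ in hand, the nonlinear problem becomes $(\hat\gamma,\hat w) = \mathcal L\big(\hat{\mathbf v}^*, \hat F[\hat\gamma,\hat w]\big)$ where $\hat F$ is the bilinear map defined by \eqref{fn}. The crucial estimate is that $\hat F$ maps $\mathcal{U}^1_{\alpha_0,\kappa+4} \times \mathcal{U}^1_{\alpha_0+2,\kappa+3}$ boundedly (and bilinearly) into $\mathcal{B}_{2\alpha_0+4,\kappa+3}$: the factor $1/r$ in \eqref{fn} plus the decays $r^{-(\alpha_0+1)}$ of $\partial_r\gamma_l$, $r^{-(\alpha_0+2)}$ of $w_k$ (resp. $r^{-\alpha_0}$ of $\gamma_l$, $r^{-(\alpha_0+3)}$ of $\partial_r w_k$) give $r^{-(2\alpha_0+4)}$, while the convolution sum $\sum_{k+l=n} l\,(1+|l|)^{-(\kappa+4)} k\,(1+|k|)^{-(\kappa+3)}$ is controlled by $(1+|n|)^{-(\kappa+3)}$ since $\kappa>0$ (a standard Young-type convolution estimate for the weights $(1+|n|)^{-s}$). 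Then a contraction mapping / Banach fixed-point argument on a small ball — using that $\|\mathcal L(\hat{\mathbf v}^*,0)\|$ is $O(\epsilon)$ and $\hat F$ is quadratic — yields the $\kappa$-solution for $\|\hat{\mathbf v}^*\|$ small. The main obstacle is the $0$-mode: both the choice of which homogeneous solution to retain (dichotomy at $\phi_0 = 2$) and, in the case $\phi_0 \leq 2$, the coupling of the fixed point with the auxiliary unknown $\mu$, so that one must run the contraction in the enlarged variable $(\hat\gamma,\hat w,\mu)$ and verify that the $0$-mode of $\hat F$ does not feed back on the $0$-mode equation in a way that destroys the closure — exactly the structural observation on the nonlinearity highlighted in the introduction. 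The remaining technical work (ODE Green-function estimates, assembling the weighted sums, smoothness and the asymptotic expansion giving distinctness of solutions for different $\mu$) follows \cite{HillairetWittwer} closely.
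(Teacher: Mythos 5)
Your proposal follows essentially the same route as the paper: a mode-by-mode linear solution operator built from the Green functions $r^{\zeta_n^\pm}$ under condition \eqref{eq_conditionphimu}, the bilinear continuity of the nonlinearity \eqref{fn} in the weighted spaces of Definition \ref{defi space}, a Banach fixed point for small data, and the same dichotomy at $\phi_0=2$ for the zero mode with $\mu$ recovered a posteriori when $\phi_0\leq 2$ and free when $\phi_0>2$. The only (cosmetic) difference is that the paper runs a parameter-dependent fixed point for each fixed $\mu$ and then solves the scalar equation $\mu=\mu_0+\partial_r\gamma_{\mu,0}(1)$ as a continuous perturbation of the identity, rather than contracting jointly in $(\hat\gamma,\hat w,\mu)$; also note your claimed target regularity $\mathcal{B}_{2\alpha_0+4,\kappa+3}$ for the nonlinearity overstates what the convolution estimate gives (the paper's Lemma \ref{lem_nonlinearity} lands in $\mathcal{B}_{4+2\alpha,\kappa+1}$, which suffices).
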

We obtain {\bf Theorem \ref{thm_existencekappa}} by splitting \eqref{odes} into 
\begin{itemize}
\item computing the source term $(F_n)_{n\in \mathbb Z}$ via \eqref{fn}
\item solving the family of differential systems \eqref{odes} with boundary conditions \eqref{bound gamma n} and arbitrary data $(F_n)_{n\in\mathbb N}$
\end{itemize}
We shall then conclude with a fixed point argument in terms of $(\hat{\gamma},\hat{w})$ in $\mathcal{U}_{\alpha_0,\kappa +4}^{2}\times \mathcal{U}_{\alpha_0 +2,\kappa+3}^{2}$ for a well-chosen $\alpha_0.$ 

\medskip

To start with, we recall that computing the source terms $(F_n)_{n\in\mathbb N}$ has been established in \cite[Lemma 5]{HillairetWittwer}:

\begin{lemma} \label{lem_nonlinearity}
\label{map2}Let $\alpha>0$ and $\kappa>0$. Then, the mapping $\mathcal{NL}:(%
\mathcal{U}_{\alpha ,\kappa +4}^{2}\times \mathcal{U}_{\alpha +2,\kappa
+ 2}^{2})^{2}\rightarrow \mathcal{B}_{4+2\alpha ,\kappa +1},$ defined by%
\begin{equation*}
\mathcal{NL}[(\hat{\gamma}^{a},\hat{w}^{a}),(\hat{\gamma}^{b},\hat{w}%
^{b})]=\left( r\mapsto -\frac{i}{r}\sum\limits_{k+l=n}(kw_{k}^{a}(r)\partial
_{r}r_{l}^{b}(r))(lr_{l}^{a}(r)\partial _{r}w_{k}^{b}(r))\right) _{n\in 
\mathbb{Z}},
\end{equation*}%
is bilinear and continuous.
\end{lemma}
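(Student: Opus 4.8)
The plan is to handle bilinearity and continuity separately, the former being free and the latter a termwise estimate on the convolution sum. Bilinearity is immediate: each summand in the $n$-th component of $\mathcal{NL}$ is linear in $(\hat{\gamma}^a,\hat{w}^a)$ and, independently, in $(\hat{\gamma}^b,\hat{w}^b)$, so no computation is needed there. The substance of the lemma is the continuity estimate, i.e. a bound of $\|\mathcal{NL}[(\hat{\gamma}^a,\hat{w}^a),(\hat{\gamma}^b,\hat{w}^b)]\|_{\mathcal{B}_{4+2\alpha,\kappa+1}}$ by $C(\alpha,\kappa)$ times the product of the two $\mathcal{U}_{\alpha,\kappa+4}^{2}\times\mathcal{U}_{\alpha+2,\kappa+2}^{2}$-norms, and this is what I would prove by estimating each term of the sum \eqref{fn}.

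First I would unwind Definition \ref{defi space} to record the pointwise decay that membership in the source spaces provides: for $\hat{\gamma}\in\mathcal{U}_{\alpha,\kappa+4}^{2}$,
\[
|\gamma_n(r)|\le \|\hat{\gamma}\|\, r^{-\alpha}(1+|n|)^{-(\kappa+4)},\qquad |\partial_r\gamma_n(r)|\le \|\hat{\gamma}\|\, r^{-(\alpha+1)}(1+|n|)^{-(\kappa+3)},
\]
and for $\hat{w}\in\mathcal{U}_{\alpha+2,\kappa+2}^{2}$,
\[
|w_n(r)|\le \|\hat{w}\|\, r^{-(\alpha+2)}(1+|n|)^{-(\kappa+2)},\qquad |\partial_r w_n(r)|\le \|\hat{w}\|\, r^{-(\alpha+3)}(1+|n|)^{-(\kappa+1)},
\]
for all $n\in\mathbb{Z}$ and $r\ge 1$ (only the $l=0,1$ parts of the $\mathcal{U}^{2}$-norms enter, since $\mathcal{NL}$ contains no second derivatives). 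Substituting these bounds into a generic summand, of type $\tfrac1r\,k\,w_k^a\,\partial_r\gamma_l^b$ or $\tfrac1r\,l\,\gamma_l^a\,\partial_r w_k^b$ with $k+l=n$, one finds that the powers of $r$ add up in both cases to exactly $r^{-(2\alpha+4)}$, which is precisely the weight of $\mathcal{B}_{4+2\alpha,\kappa+1}$. Hence, after multiplying by $r^{2\alpha+4}$, one is left uniformly in $r\ge 1$ with a sum over $k+l=n$ of products of frequency weights, times the product of the two input norms.

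The only step requiring care is then the frequency bookkeeping: I would verify
\[
\sup_{n\in\mathbb{Z}}(1+|n|)^{\kappa+1}\sum_{k+l=n}|k|(1+|k|)^{-(\kappa+2)}(1+|l|)^{-(\kappa+3)}<\infty
\]
and the companion sum with $|l|(1+|l|)^{-(\kappa+4)}(1+|k|)^{-(\kappa+1)}$. Using the elementary Peetre inequality $1+|n|=1+|k+l|\le (1+|k|)(1+|l|)$, hence $(1+|n|)^{\kappa+1}\le (1+|k|)^{\kappa+1}(1+|l|)^{\kappa+1}$, the general term of the first sum is controlled by $\big(|k|(1+|k|)^{-1}\big)(1+|l|)^{-2}\le (1+|l|)^{-2}$, and that of the second by $|l|(1+|l|)^{-3}\le (1+|l|)^{-2}$; in either case the sum over $k+l=n$ is bounded by $\sum_{l\in\mathbb{Z}}(1+|l|)^{-2}<\infty$, uniformly in $n$. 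Combining this with the radial computation yields the claimed continuity, with a constant depending only on $\sum_{l}(1+|l|)^{-2}$, in particular independent of $\alpha$.

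I do not expect a genuine analytic obstacle: the argument is essentially bookkeeping (and the statement is quoted from \cite[Lemma 5]{HillairetWittwer}). The delicate point, if one wants to call it that, is that there is no slack anywhere — the radial exponents combine to exactly $-(2\alpha+4)$ and the residual frequency decay is exactly the borderline-summable power $(1+|l|)^{-2}$. This is precisely why the spaces $\mathcal{U}_{\alpha,\kappa+4}^{2}$, $\mathcal{U}_{\alpha+2,\kappa+2}^{2}$ and the target $\mathcal{B}_{4+2\alpha,\kappa+1}$ carry the weights they do, and a reader reconstructing the proof should expect to read these exponents off from exactly this estimate.
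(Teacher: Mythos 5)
Your proof is correct: the paper itself gives no argument for this lemma (it simply defers to \cite[Lemma 5]{HillairetWittwer}), and your reconstruction --- exact cancellation of the radial weights to $r^{-(2\alpha+4)}$ plus the Peetre inequality reducing the convolution to the summable tail $(1+|l|)^{-2}$ --- is precisely the weighted-convolution estimate that the cited lemma rests on, with the exponents in $\mathcal{U}^{2}_{\alpha,\kappa+4}$, $\mathcal{U}^{2}_{\alpha+2,\kappa+2}$ and $\mathcal{B}_{4+2\alpha,\kappa+1}$ read off correctly from Definition \ref{defi space}. The only point left tacit is that each $F_n$ belongs to $C[1,\infty)$, which follows at once from the uniform convergence of the series that your termwise bounds already provide.
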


To complete the proof of {\bf Theorem \ref{thm_existencekappa}}, we analyze at first the linearized system in the following subsection and apply the subsequent continuity results in the following part. 

\subsection{Analysis of the linearized problem}

In this section, we fix a set of boundary data $(\hat{v}_r^*,\hat{v}_{\theta}^*) \in \mathcal B_{\kappa+5}^0 \times \mathcal B_{\kappa+4}^0$ and of source term 
$\hat{F} \in  { \mathcal B_{4+2\alpha,\kappa+1}}$ -- with $\kappa$ and $\alpha$ to be made precise -- and we consider the infinite differential system \eqref{odes} on $(1,\infty)$ with the boundary conditions:%
\begin{equation} \label{eq_boundarygamman}
\left\{ 
\begin{array}{l}
in\gamma _{n}(1)=v_{r,n}^{\ast }, \\ 
-\partial _{r}\gamma _{n}(1)=v_{\theta ,n}^{\ast }, \\ 
\lim_{r\rightarrow \infty }\left( |\gamma _{n}(r)|+|\partial _{r}\gamma
_{n}(r)|\right) =0,%
\end{array}%
\right.  \qquad n \neq 0,
\end{equation}%
and
\begin{equation} \label{eq_boundarygamman0}
\left\{ 
\begin{array}{l}
-\partial _{r}\gamma _{0}(1)= \mu_0 - \mu, \\ 
\lim_{r\rightarrow \infty }\left( |\gamma _{0}(r)|+|\partial _{r}\gamma
_{0}(r)|\right) =0,%
\end{array}%
\right.  \qquad n=0.
\end{equation}%

We explain at first how we solve this differential system and we state then our main result.
\medskip

\subsubsection{The case $n\neq 0$} 
To solve the $w_n$ equation, we remark that the associated Green functions read 
$r^{\zeta_n^{\pm}}$ with:
\begin{equation*}
\zeta _{n}^{\pm }=-\frac{\phi_0 }{2}\pm \frac{1}{2}\sqrt{\phi_0 ^{2}+4(in\mu
+n^{2})}
\end{equation*}%
while the Green functions associated with the $\gamma_n$ equation are $r^{\pm |n|}.$
Taking into account that the solutions must decay at infinity we obtain the following explicit expressions for $r \in (1,\infty).$ First, we have:
\begin{equation} \label{eq_wn}
w_n(r) = \bar{w}_n r^{\zeta_n^-} - w_n[F_n](r)
\end{equation}
where:
\begin{equation} \label{eq_wnF}
w_n[F_n](r) =  \int_{r}^{\infty }\frac{sF_{n}(s)}{%
\left( \sqrt{\phi_0 ^{2}+4(in\mu +n^{2})}\right) }\left( \frac{r}{s}\right)
^{\zeta _{n}^{+}}ds \\ + \int_{1}^{r}\frac{sF_{n}(s)}{\left( \sqrt{\phi_0
^{2}+4(in\mu +n^{2})}\right) }\left( \frac{r}{s}\right) ^{\zeta _{n}^{-}}ds.
\end{equation}
Then, we deduce that 
\begin{equation} \label{eq_gamman}
\gamma_n(r) = 
\left\{
\begin{aligned}
& \frac{\bar{\gamma}_{n}}{r^{|n|}} 
- \frac{\bar{w}_n}{(\zeta_n^{-} +2)^2 - n^2} r^{2+\zeta_n^-}
- \gamma_n[F_n](r),  && \text{if } (\zeta_n^{-}+ 2)^2 \neq n^2\\
&  \frac{\bar{\gamma}_{n}}{r^{|n|}} 
+  \frac{\bar{w}_n}{{2}|n| r^{|n|}}\ln r - \gamma_n[F_n](r), 
&& \text{if } \zeta_n^{-}+ 2 + |n| = 0
\end{aligned}
\right.
\end{equation}
where
\begin{equation} \label{eq_gammanF}
\gamma_n[F_n](r) = \int_{r}^{\infty }\frac{sw_{n}[F_n](s)}{2|n|}\left( \frac{r}{s}\right) ^{|n|}ds+\int_{1}^{r}\frac{sw_{n}[F_n](s)}{2|n|}\left( \frac{s}{r}\right)^{|n|}ds.
\end{equation}
We point out that $2+ \Re(\zeta_n^-)$ will be negative in our approach so that we do not have to consider the case $\zeta_n^-+2 =|n|.$\footnote{We complement this remark with a word concerning the condition $\zeta_n^- + 2 =  - |n|.$ Standard computations show that this resonance happens only when
$\mu= 0,$ and one value of $|n|$ depending on $\phi_0$ ranging a countable subset of $(2,4).$ 
}
Finally, we fix $(\bar{\gamma}_n,\bar{w}_n) \in \mathbb R^2$ in order to match the remaining boundary conditions in \eqref{eq_boundarygamman}. Standard computations show that:

	\begin{itemize}
		\item if $\zeta_n^- + 2 \neq -n$, we have:
		\begin{equation} \label{eq_constantgammaw}
			\left\{ 
			\begin{aligned}
				\bar{\gamma}_{n} & =  -\dfrac{1}{2+\zeta_n^- + |n|} \left( (2+\zeta_n^-) (i {\rm {sgn}}(n)\frac{v_{r,n}^*}{|n|}-\gamma_n[F_n](1)) +  \partial_r \gamma_n[F_n](1)-v_{\theta,n}^* \right), \\
				\bar{w}_n &= -\dfrac{(\zeta_n^- +2)^2 - n^2}{2+\zeta_n^- + |n|} \left( |n|\gamma_n[F_n](1)-i {\rm {sgn}}(n) v_{r,n}^* +\partial_r \gamma_n[F_n](1) -v_{\theta,n}^*  \right).
			\end{aligned}
			\right.
		\end{equation}%
		\item if $\zeta_n^- + 2 = -|n|$
		\begin{equation} \label{eq_constantgammawresonant-new}
			\left\{ 
			\begin{aligned}
				\bar{\gamma}_{n} & =-i {\rm {sgn}}(n)  \frac{v_{r,n}^*}{|n|} + \gamma_n[F_n](1) , \\
				\bar{w}_n &= 2|n| \left(  |n|\gamma_n[F_n](1)- \partial_r \gamma_n[F_n](1)-v_{\theta,n}^* - i {\rm {sgn}}(n)v_{r,n}^*  \right).
			\end{aligned}
			\right.
		\end{equation}%

	\end{itemize}

\subsubsection{The case $n=0$} {In case $n=0$ we have the differential system:}
\begin{equation}
\left\{ 
\begin{aligned}
\partial _{rr}\gamma _{0}+\frac{1}{r}\partial _{r}\gamma _{0}=-w_{0}, \\ 
\partial _{rr}w_{0}+\frac{\left( \phi_0 +1\right) }{r}\partial _{r}w_{0}=F_{0},%
\end{aligned}%
\right. \text{ \ in }(1,\infty ),  \label{zero mode}
\end{equation}%
with a decay condition at infinity and only one boundary condition:
\begin{equation} \label{eq_boundgamma_0}
-\partial_r \gamma_0(1) =  \mu_0 - \mu.
\end{equation}
The non-constant Green function of the $w_0$ equation  reads $r^{-\phi_0}.$ Consequently, we distinguish between two subcases.

\medskip

{\bf Subcase 1 : $0\leq \phi_0 \leq 2.$} The Green function does not decay sufficiently fast to be integrated in the $\gamma_0$ formula. In this case, our only choice for $w_0$ is the solution resulting from the source term that reads:
\begin{align} \label{eq_w0<2}
w_0(r) &=  w_0[F_0](r) := \int_{r}^{\infty }\left(
\int_{s}^{\infty }\left( \frac{t}{s}\right) ^{\phi_0 +1}F_{0}(t)dt\right) {\rm d}s,\\
\label{eq_gamma0<2}
\gamma_0(r) &= \gamma_0[F_0](r) := \int_{r}^{\infty} \int_{s}^{\infty} \dfrac{\sigma}{s} w_0[F_0](\sigma){\rm d}\sigma {\rm d}s.
\end{align}
This does not enable to choose the value for $\partial_r \gamma_0(1)$ that reads:
\[
\partial_r \gamma_0(1) = \int_{1}^{\infty} s w_0[F_0](s){\rm d}s.
\]

{\bf Subcase 2: $\phi_0 > 2.$} We can fix in this case 
\begin{align} \label{eq_w0>2}
w_0(r) = \dfrac{\bar{w}_0}{r^{\phi_0}} + w_0[F_0](r),
\\
\label{eq_gamma0>2}
\gamma_0(r) =  \dfrac{\bar{w}_0}{(\phi_0-2)^2 r^{\phi_0-2}} + \gamma_0[F_0](r),
\end{align}
where the constant $\bar{w}_0$ is chosen to match the boundary condition \eqref{eq_boundgamma_0} leading to:
\begin{equation} \label{eq_constantgamma0}
\bar{w}_0 = (\phi_0-2) \left( (\mu_0 - \mu) - \int_{1}^{\infty} s w_0[F_0](s){\rm d}s\right). 
\end{equation}

\subsubsection{Result on the analysis of the linearized system}
{
Given $(\phi_0,\mu) \in [0,\infty) \times \mathbb R,$ we define the mapping $\mathcal S_{\mu,\phi_0} : (\hat{F},\hat{v}_{r}^*,\hat{v}_{\theta}^{*}) \to (\hat{\gamma},\hat{w})$ where $\hat{w} = (w_n)_{n\in\mathbb Z}$ and $\hat{\gamma} = (\gamma_n)_{n\in\mathbb Z}$ are computed through formulas \eqref{eq_wn}-\eqref{eq_wnF} and \eqref{eq_gamman}-\eqref{eq_gammanF} with constants given by \eqref{eq_constantgammaw} and \eqref{eq_constantgammawresonant-new} in case $n \neq 0,$ 
while the $0$-mode is given by formula \eqref{eq_w0<2}-\eqref{eq_gamma0<2} in case $0\leq \phi_0 \leq 2,$
and \eqref{eq_w0>2}-\eqref{eq_gamma0>2}-\eqref{eq_constantgamma0} in case $\phi_0 > 2.$ This mapping associates to the data $ (\hat{F},\hat{v}_{r}^*,\hat{v}_{\theta}^{*})$ solutions $ (\hat{\gamma},\hat{w})$ to \eqref{gamma w}. Furthermore, these solutions match the boundary condition \eqref{bound gamma n} for $n \neq 0$ and  also for $n=0$ in case $\phi_0 >2.$ 
}
\medskip

The analysis of this mapping is the content of the next lemma:

\begin{lemma} \label{lem_analysislinearproblem}
Let $\phi_0 \in [0,\infty)$ 
\begin{itemize}
\item[i)] Let $\mu \in \mathbb R$ such that \eqref{eq_conditionphimu} holds true and 
$\alpha \in (0,-\Re(\zeta_1^-)-2).$
The mapping $\mathcal S_{\mu,\phi_0}: \mathcal{B}_{4+2\alpha ,\kappa +1}\times 
\mathcal{B}_{\kappa +5}^{0}\times \mathcal{B}^0_{\kappa +4} \rightarrow 
\mathcal{U}_{\alpha ,\kappa +4}^{2}\times \mathcal{U}_{\alpha +2,\kappa
	+3}^{2},$ which associates to the triple $(\hat{F},\hat{v}_{r}^*,\hat{v}_{\theta}^{*})$ the pair $ (\hat{\gamma},\hat{w})$ is linear and continuous.
\item[ii)] Let $I \subset [0,\infty)$ be an interval  and assume $\alpha \in (0,
\inf_{\mu \in I} (-\Re(\zeta_1^{-}))-2).$  Then, the mapping $\mu \mapsto \mathcal S_{\mu,\phi_0}$ is continuous from $I$ with values into the set of linear continuous mapping $\mathcal{B}_{4+2\alpha ,\kappa+1}\times 
\mathcal{B}_{\kappa +5}^{0}\times \mathcal{B}^0_{\kappa +4} \rightarrow 
\mathcal{U}_{\alpha ,\kappa +4}^{2}\times \mathcal{U}_{\alpha +2,\kappa
+3}^{2}.$
\end{itemize}

\end{lemma}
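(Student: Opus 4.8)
The plan is to establish Lemma~\ref{lem_analysislinearproblem} by estimating the explicit formulas \eqref{eq_wn}--\eqref{eq_gammanF} term by term, mode by mode, and then gluing the bounds together in the weighted norms of Definition~\ref{defi space}. Since the mapping $\mathcal S_{\mu,\phi_0}$ is manifestly linear, the whole statement is a matter of proving the operator-norm bound in part~i) with a constant that is (locally) uniform in $\mu$, from which part~ii) will follow by differentiating/expanding the formulas in $\mu$ and re-running the same estimates on the difference. So I would concentrate all the work on part~i).

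First I would treat the $w_n$ equation for $n\neq 0$. Given $\hat F\in\mathcal B_{4+2\alpha,\kappa+1}$, so that $|F_n(r)|\lesssim (1+|n|)^{-(\kappa+1)}r^{-(4+2\alpha)}$, I would bound the two integral pieces of $w_n[F_n]$ in \eqref{eq_wnF} directly. The key elementary fact is that $\int_r^\infty s^{1-(4+2\alpha)}(r/s)^{\zeta_n^+}\,{\rm d}s$ and $\int_1^r s^{1-(4+2\alpha)}(r/s)^{\zeta_n^-}\,{\rm d}s$ both decay like $r^{-(2+2\alpha)}$, provided $\Re(\zeta_n^-)<-(2+2\alpha)$ (which, by monotonicity of $\Re(\zeta_n^-)$ in $n$, reduces to $\Re(\zeta_1^-)<-(2+\alpha)$ — exactly the hypothesis $\alpha<-\Re(\zeta_1^-)-2$) and that $\Re(\zeta_n^+)=-\phi_0-\Re(\zeta_n^-)>0>2+2\alpha-4$, so the $\zeta_n^+$-piece near infinity converges. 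One must also keep track of the prefactor $1/\sqrt{\phi_0^2+4(in\mu+n^2)}$, which is $O(1/|n|)$ for large $n$, improving the $n$-decay; this, combined with the resolvent-type decay in $r$, gives $|w_n[F_n](r)|\lesssim (1+|n|)^{-(\kappa+3)}r^{-(2+2\alpha)}\leq (1+|n|)^{-(\kappa+3)}r^{-(\alpha+2)}$, i.e. the source part lies in $\mathcal B_{\alpha+2,\kappa+3}$, and a similar gain of two powers of $r$ holds for each $r$-derivative (differentiating the integral formula produces a boundary term plus lower-order integrals, handled the same way), giving membership in $\mathcal U^2_{\alpha+2,\kappa+3}$. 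The homogeneous part $\bar w_n r^{\zeta_n^-}$ is controlled once $\bar w_n$ is estimated via \eqref{eq_constantgammaw}: the denominator $2+\zeta_n^-+|n|$ stays bounded away from zero for $n\neq 0$ (its modulus grows like $|n|$), the factor $(\zeta_n^-+2)^2-n^2$ is $O(|n|)$, the data terms $v_{r,n}^*,v_{\theta,n}^*$ are controlled by $\mathcal B^0_{\kappa+5}\times\mathcal B^0_{\kappa+4}$, and $\gamma_n[F_n](1),\partial_r\gamma_n[F_n](1)$ by the already-obtained bounds; so $|\bar w_n|\lesssim (1+|n|)^{-(\kappa+2)}$ and $r^{\Re(\zeta_n^-)}\leq r^{-(2+\alpha)}$, again landing in $\mathcal U^2_{\alpha+2,\kappa+3}$. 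Then $\gamma_n$ is obtained from \eqref{eq_gamman}--\eqref{eq_gammanF} by the completely analogous computation for the operator $\partial_{rr}+\tfrac1r\partial_r-\tfrac{n^2}{r^2}$ with Green functions $r^{\pm|n|}$: the source term $w_n$ sits in $\mathcal B_{\alpha+2,\kappa+3}$, so $\gamma_n[F_n]$ gains two powers and lands in $\mathcal B_{\alpha,\kappa+5}\hookrightarrow\mathcal B_{\alpha,\kappa+4}$; the homogeneous part $\bar\gamma_n r^{-|n|}$ is controlled once $|\bar\gamma_n|\lesssim(1+|n|)^{-(\kappa+4)}$ is read off \eqref{eq_constantgammaw}, and the anomalous $r^{2+\zeta_n^-}$ term, with $2+\Re(\zeta_n^-)<-\alpha$, is harmless. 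This yields $\hat\gamma\in\mathcal U^2_{\alpha,\kappa+4}$.

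Next I would handle the $n=0$ mode separately along the two subcases already dissected in the text. When $0\leq\phi_0\leq2$, formulas \eqref{eq_w0<2}--\eqref{eq_gamma0<2}: from $|F_0(r)|\lesssim r^{-(4+2\alpha)}$ the inner integral $\int_s^\infty (t/s)^{\phi_0+1}F_0(t)\,{\rm d}t$ is $O(s^{-(3+2\alpha)})$ because $\phi_0+1-(4+2\alpha)=\phi_0-3-2\alpha<0$ and the outer integral then gives $O(r^{-(2+2\alpha)})$ for $w_0$ and a further two powers for $\gamma_0$, and since $\mathcal B_{\kappa+1}$ has no constraint on the zero mode the $(1+|n|)^{\kappa}$ weight is vacuous at $n=0$; derivatives are handled by the same two-powers-per-derivative rule, so the zero mode lies in the required $\mathcal U^2$ spaces. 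When $\phi_0>2$, one has in addition the homogeneous pieces with constant $\bar w_0$ from \eqref{eq_constantgamma0}: here $(\mu_0-\mu)$ is a fixed number and $\int_1^\infty s\,w_0[F_0](s)\,{\rm d}s$ converges (the integrand is $O(s^{-(1+2\alpha)})$) and is controlled by $\|\hat F\|$, so $|\bar w_0|\lesssim 1+\|\hat F\|$, and the functions $r^{-\phi_0}$, $r^{-(\phi_0-2)}$ decay at least like $r^{-2}$, hence like $r^{-\alpha}$ up to restricting $\alpha\in(0,2]$ — which is within the allowed range $\alpha<-\Re(\zeta_1^-)-2$ in this regime. (Strictly, one must note the zero mode inclusion is into $\mathcal U^2_{\alpha,\cdot}$ with the same $\alpha$, which forces $\alpha\le\Re(-\zeta_0^-)-2=\phi_0-2$ when $\phi_0>2$; this is the implicit lower bound behind the constant $\alpha_0$ in the definition of $\kappa$-solution and I would make it explicit here.) Collecting the $n\neq0$ and $n=0$ estimates and taking suprema over $n$ and $r$ gives continuity of $\mathcal S_{\mu,\phi_0}$, proving part~i).

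For part~ii), I would write, for $\mu,\mu'\in I$, the difference $\mathcal S_{\mu,\phi_0}-\mathcal S_{\mu',\phi_0}$ using the same explicit formulas: every ingredient — $\zeta_n^\pm$, the prefactors $1/\sqrt{\phi_0^2+4(in\mu+n^2)}$, the kernels $(r/s)^{\zeta_n^\pm}$, the constants \eqref{eq_constantgammaw}, \eqref{eq_constantgammawresonant-new}, \eqref{eq_constantgamma0}, and the boundary value $\mu_0-\mu$ in the zero mode — depends on $\mu$ in a locally Lipschitz (indeed real-analytic) way, with the only subtlety being the $\mu$-dependence of the exponent in $(r/s)^{\zeta_n^\pm}$, for which I would use $|(r/s)^{\zeta_n^-(\mu)}-(r/s)^{\zeta_n^-(\mu')}|\le |\zeta_n^-(\mu)-\zeta_n^-(\mu')|\,|\ln(r/s)|\,(r/s)^{\max\Re\zeta_n^-}$ together with the slack between $\alpha$ and $-\Re(\zeta_1^-)-2$ (valid uniformly on $I$ by the hypothesis on $\alpha$) to absorb the logarithm into a slightly smaller decay rate. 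Re-running the estimates of part~i) on these differences yields an operator-norm bound of order $|\mu-\mu'|$, which is the asserted continuity. I expect the main obstacle to be purely bookkeeping rather than conceptual: keeping the decay-rate inequalities $\Re(\zeta_n^-)<-(2+\alpha)$, $\Re(\zeta_n^+)>\text{(something)}$, $\alpha\le\phi_0-2$ (when $\phi_0>2$), and the resonance exclusions $\zeta_n^-+2\neq\pm|n|$ simultaneously consistent across all $n$ and across the two $\phi_0$-regimes, and propagating the $(1+|n|)$-weights correctly through the two successive resolvent solves — but since the corresponding estimates for the source-term map are already granted by Lemma~\ref{lem_nonlinearity} and the structure mirrors \cite[Lemmas~3--4]{HillairetWittwer}, no genuinely new difficulty arises beyond the treatment of the $\phi_0>2$ zero mode, which is the one honestly new case.
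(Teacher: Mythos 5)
Your overall route is the same as the paper's: you estimate the explicit mode\-/by\-/mode formulas directly, split each mode into the source\-/driven part ($w_n[F_n]$, $\gamma_n[F_n]$) and the homogeneous/boundary part ($\bar w_n r^{\zeta_n^-}$, $\bar\gamma_n r^{-|n|}$, \dots), treat $n=0$ separately in the two regimes $\phi_0\le 2$ and $\phi_0>2$, and deduce part~ii) from the smooth $\mu$-dependence of all ingredients (the paper invokes ``standard parameter-integral arguments''; your log-absorption estimate for $(r/s)^{\zeta_n^-(\mu)}$ is the way to make that precise). The paper outsources the source-driven bounds to Propositions~11--12 of \cite{HillairetWittwer}, but the computations you sketch are the same ones. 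Your parenthetical observation that the $\phi_0>2$ zero mode forces the additional constraint $\alpha\le\phi_0-2$ (not implied by $\alpha<-\Re(\zeta_1^-)-2$) is a genuine and worthwhile point that the lemma's statement leaves implicit.

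Two concrete inaccuracies should be repaired. First, your claim that the denominator $2+\zeta_n^-+|n|$ ``stays bounded away from zero for $n\neq 0$ (its modulus grows like $|n|$)'' is wrong: expanding the square root gives $2+\zeta_n^-+|n|\to 2-\tfrac{\phi_0}{2}-\tfrac{i\mu}{2}\,\mathrm{sgn}(n)$, a \emph{constant}, and the resonance $\zeta_n^-+2=-|n|$ genuinely occurs (for $\mu=0$ and a discrete set of $\phi_0\in(2,4)$), which is why the paper carries the separate constants \eqref{eq_constantgammawresonant-new} and the log-corrected Green function $r^{-|n|}\ln r$ in \eqref{eq_gamman}; dismissing this as ``bookkeeping'' leaves a case of the construction uncovered, and near $\phi_0=4$, $\mu=0$ even the large-$|n|$ lower bound on the denominator needs care. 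Second, your bound $|\bar w_n|\lesssim(1+|n|)^{-(\kappa+2)}$ is one power of $n$ short of what your own conclusion requires: for $\bar w_n r^{\zeta_n^-}$ to lie in $\mathcal U^2_{\alpha+2,\kappa+3}$ one needs $|\bar w_n|\lesssim(1+|n|)^{-(\kappa+3)}$, which is what \eqref{eq_constantgammaw} actually gives, since the prefactor $((\zeta_n^-+2)^2-n^2)/(2+\zeta_n^-+|n|)=\zeta_n^-+2-|n|$ is $O(|n|)$ while the bracket $|n|\gamma_n[F_n](1)+\dots$ is $O((1+|n|)^{-(\kappa+4)})$; this is the count the paper records as $(\bar\gamma_n,\bar w_n)\in\mathcal B^0_{\kappa+4}\times\mathcal B^0_{\kappa+3}$. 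Neither issue changes the architecture of the proof, but both must be fixed for the estimates to close.
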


We emphasize that for $(\phi_0,\mu) \in [0,\infty) \times \mathbb R,$ we have
\[
-\Re(\zeta_1^-) = \frac{\phi_0 }{2}+\frac{1}{2\sqrt{2}}\left( \phi_0 ^{2}+4+%
\sqrt{\left( \phi_0 ^{2}+4\right) ^{2}+16\mu^{2}}\right) ^{1/2} .
\]
Consequently,  \eqref{eq_conditionphimu} entails that the family of exponents $\alpha$ for which the assumption of $i)$ holds true is not empty.
The proof of {\bf Lemma \ref{lem_analysislinearproblem}} follows the same strategy as in \cite[Section 5.1]{HillairetWittwer}. We point out the main differences in {\bf Appendix \ref{app_linearized}} and proceed to the proof of our main result.

\subsection{Proof of Theorem \ref{thm_existencekappa}}

Let $\phi_0 \in [0,\infty)$ and $\mu_0 \in \mathbb R$ satisfying \eqref{eq_conditionphimu}. By continuity, there exists $\varepsilon_0$ such that
\[
\frac{\phi_0 }{2}+\frac{1}{2\sqrt{2}}\left( \phi_0 ^{2}+4+%
\sqrt{\left( \phi_0 ^{2}+4\right) ^{2}+16\mu^{2}}\right) ^{1/2} - 2  > 0,  \qquad 
\forall \, \mu \in [\mu_0-\varepsilon_0,\mu_0+ \varepsilon_0].
\] 
In particular, we may fix:
\[
\alpha_0 = \dfrac{1}{2} \min_{\mu \in  [\mu_0-\varepsilon_0,\mu_0+ \varepsilon_0]} \left\{ \frac{\phi_0 }{2}+\frac{1}{2\sqrt{2}}\left( \phi_0 ^{2}+4+%
\sqrt{\left( \phi_0 ^{2}+4\right) ^{2}+16\mu^{2}}\right) ^{1/2} - 2 \right\} \in (0,\infty).
\]

Consider now $\kappa >0$ and $(\hat{v}_r^*,\hat{v}_{\theta}^*) \in \mathcal B^0_{\kappa+5} \times \mathcal B^0_{\kappa+4}$.
Thanks to {\bf Lemma \ref{lem_nonlinearity}} and {\bf Lemma \ref{lem_analysislinearproblem} } we can construct the mapping 
\[
\begin{array}{rrcl}
\Phi_{\mu} :&  \mathcal{U}_{\alpha_0 ,\kappa +4}^{2}\times \mathcal{U}_{\alpha_0 +2,\kappa
+3}^{2} & \longrightarrow &  \mathcal{U}_{\alpha_0,\kappa +4}^{2}\times \mathcal{U}_{\alpha_0 +2,\kappa
+3}^{2} \\[6pt]
&  (\hat{\gamma},\hat{w}) &\longmapsto &  \mathcal S_{\mu,\phi_0} (\mathcal{NL}(\hat{\gamma},\hat{w}) , \hat{\bf{v}}^*).
\end{array}
\] 
Our existence  result is based on the remark that a fixed point of $\Phi_{\mu}$ yields a solution to \eqref{gamma w}-\eqref{bound gamma n}-\eqref{fn} in case $\phi_0 > 2$ while in case $0\leq \phi_0 \leq 2$ we must 
complement the construction of a fixed point to $\Phi_{\mu}$ by matching the boundary conditions for the zero mode.

In case $\phi_0 > 2,$ since $\mathcal{NL}$ and $\mathcal S_{\mu,\phi_0}$ are (bilinear and linear respectively) continuous, uniformly in $\mu \in [\mu_0-\varepsilon_0,\mu+\varepsilon_0],$ existence of a fixed point for small data $\hat{\bf v}^* \in \mathcal B_{\kappa+5}^0 \times \mathcal B_{\kappa+4}^{0}$ and small $|\mu-\mu_0|$ yields from a classical Picard fixed-point theorem (up to take $\varepsilon_0$ smaller if necessary).

In case $0\leq \phi_0 \leq 2,$ we remark that a classical (parameter)-dependent fixed-point argument yields that, for small $\varepsilon >0$ we have existence of a unique fixed-point $(\hat{\gamma}_{\mu},\hat{w}_{\mu})$ to $\Phi_{\mu}$ whatever $\hat{\bf v}^*$ with norm smaller than $\varepsilon$ for all $\mu \in  [\mu_0-\varepsilon_0,\mu_0+\varepsilon_0].$ Furthermore:
\begin{itemize}
\item There exists a constant $C_0$ independent of $\mu,{\hat{\bf v}}^*$ such that 
\[
\|(\hat{\gamma}_{\mu},\hat{w}_{\mu})\|_{\mathcal{U}_{\alpha ,\kappa +4}^{2}\times \mathcal{U}_{\alpha +2,\kappa
+3}^{2}} \leq C_0 (\|\hat{v}_r^*; {\mathcal B}_{\kappa+5}^0 \| + \|\hat{v}_{\theta}^*; {\mathcal B}_{\kappa+4}^0 \|).
\] 
\item The mapping $\mu \mapsto (\hat{\gamma}_{\mu},\hat{w}_{\mu})$ is continuous.  
\end{itemize}
This entails that the mapping $\mu \mapsto \mu - \partial_r \gamma_{\mu,0}(1)$ is a continuous perturbation of the identity from $[\mu_0-\varepsilon_0,\mu_0+\varepsilon_0]$ into $\mathbb R$ where:
\[
|\partial_r \gamma_{\mu,0}(1)| \leq C_0 (\|\hat{v}_r^*; {\mathcal B}_{\kappa+5}^0 \| + \|\hat{v}_{\theta}^*; {\mathcal B}_{\kappa+4}^0 \|)^2
\]
for some constant $C_0$ (possibly different to the previous one but again) independent of $\mu$ and $\hat{\bf v}^*.$
Up to choose $\varepsilon$ sufficiently small with respect to $\varepsilon_0$ we have in particular that $\mu_0$ is in the range of this mapping. This concludes the proof.



\section{Uniqueness of steady solutions}
\label{sec_uniqueness}

We provide here a proof of {\bf Theorem \ref{thm_uniqueness}.}
We recall that we assume $\phi_0 \in (21/10,3)$ and $\beta_0 > 0.$
We fix $\mu \in \mathbb R$ and ${\bf u}^* \in C^{\infty}(\partial B)$ a boundary condition.  We assume $({\bf u}_{a},p_a)$ and $({\bf u}_b,p_b)$ are two smooth solutions to \eqref{eq_NS2D} with the same boundary data ${\bf u}^*$ and such that:
\begin{equation}\label{eq_expansionu}
{\bf v}_{i} = {\bf u}_{i} - {\bf u}_{ref}[\phi_0,\mu],\quad i=a, \, b,
\end{equation}
satisfies:
\begin{equation} \label{eq_Miref}
M_{i}^{ref} := \sup_{|x| > 1} |x|^{1+\beta_0} \left(|{\bf v}_i(x)| + |x| |\nabla {\bf v}_i(x)| + 
|x|^{2}|\nabla^2 {\bf v}_{i}(x)|\right) < \infty.
\end{equation}
Our aim is to prove that, if $M_{a}^{ref},\, M_{b}^{ref}$ and $\mu$ are sufficiently small, then ${\bf u}_a = {\bf u}_b.$
We want to apply an energy method but a standard multiplier argument is not sufficient (see the introduction). To get a finer description we will use a similar expansion to the previous section by introducing radial coordinates and using Fourier modes in the 
angular variable. We will then treat differently small frequencies and large frequencies in terms of the angular components of the velocity field. 

\medskip

Precisely, given a generic divergence-free ${\bf w} \in C^{\infty}(\mathbb R^2 \setminus \overline{B})$ that prescribes no flux through circles, we previously used that 
\[
{\bf w} = \nabla^{\bot} \varphi \quad {\text{with} } \quad
\varphi(r,\theta)= \sum_{k \in \mathbb Z} \varphi_{k}(r) \exp(ik\theta). 
\]
Correspondingly, we expand:
\begin{equation*}
	\mathbf{w}  = \mathbf{w}
	_{0} + \mathbf{w}^{(0)}= \mathbf{w}_{0} + \mathbf{w}_{1} + \mathbf{w}^{(1)},
\end{equation*}
where
\begin{equation*}
	\begin{aligned}  {\bf w}_0(r,\theta) & = \nabla^{\bot} \varphi_0(r)=  w_0(r) 
		{\bf e}_{\theta} , \\[6pt]  {\bf w}_1(r,\theta) &= \nabla^{\bot} [\varphi_1(r) \exp{i\theta} + \varphi_{-1}(r)\exp(-i\theta)]\\
		&= \left( w_c^{(n)}(r) \cos\theta + w_s^{(n)} \sin\theta\right) {\bf e}_{r} \\
		& \quad + \left( w_c^{(t)}(r) \cos\theta + w_s^{(t)} \sin\theta\right) {\bf e}_{\theta} ,
	\end{aligned}
\end{equation*}
and, when $k \geq 0$ :
\begin{equation*}
	\mathbf{w}^{(k)} = \sum_{|\ell| > k} \nabla^{\bot} [ \varphi_{\ell}(r) \exp(i
	\ell \theta)].
\end{equation*}
Such an expansion is in particular valid for our candidate perturbations ${\bf v}_{a}$ and ${\bf v}_b$.  To prepare a fine treatment of nonlinear terms, we start by improving the known decay at infinity of these perturbations. This is the content of the following proposition:
\begin{proposition} \label{prop_decaymode1}
Given $\beta_1 \in (1/2,1),$ and $\varepsilon \in (0,1)$ there exist positive constants $C_0,\, C_1$ and a positive exponent $k$  such that:
\begin{align*}
\sup_{|x| > 1} [ |x|^{1+\beta_1} (|{\bf v}_{b,1}(x)|+ |x||\nabla {\bf v}_{b,1}(x)|) & + |x|^{1+2\beta_1-\varepsilon} (|{\bf v}_b^{(1)}(x)| + |x| |\nabla {\bf v}_b^{(1)}(x)|)]  \\
& \leq C_0 M_{b}^{ref} + C_1 |M_{b}^{r	ef}|^{2k}.
\end{align*}
\end{proposition}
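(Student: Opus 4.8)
The strategy is a bootstrap on the decay rate of the non-zero modes of ${\bf v}_b$, starting from the \emph{a priori} rate $1/|x|^{1+\beta_0}$ encoded in $M_b^{ref}$ and iterating towards the improved rates claimed for ${\bf v}_{b,1}$ and ${\bf v}_b^{(1)}$. The key point is that ${\bf v}_b$ solves a linearization of \eqref{eq_NS2D} around the profile $({\bf u}_{ref}[\phi_0,\mu], p_{ref})$ with a quadratic source term $-({\bf v}_b\cdot\nabla){\bf v}_b$. Writing ${\bf v}_b = \nabla^{\bot}\varphi$ and expanding in Fourier modes in $\theta$, each mode $(\gamma_n, w_n)$ (with $n\neq 0$) solves precisely the differential system \eqref{odes} with boundary conditions \eqref{eq_boundarygamman} and a source term $F_n$ given by the bilinear nonlinearity \eqref{fn}. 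Thus I can invoke the explicit solution formulas \eqref{eq_wn}--\eqref{eq_gammanF} together with the continuity of the linear solution operator $\mathcal S_{\mu,\phi_0}$ from \textbf{Lemma \ref{lem_analysislinearproblem}} and the continuity of the nonlinearity $\mathcal{NL}$ from \textbf{Lemma \ref{lem_nonlinearity}}, now applied not as a fixed-point input but as a gain-of-decay mechanism.

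\textbf{Key steps.} First I would record that ${\bf v}_b\in \mathcal U_{\beta_0, K}$-type spaces for the non-zero modes: the hypothesis $M_b^{ref} < \infty$ gives pointwise decay of ${\bf v}_b$, $\nabla{\bf v}_b$, $\nabla^2{\bf v}_b$ at rates $1+\beta_0$, $2+\beta_0$, $3+\beta_0$, which translates into bounds on $\gamma_n$, $w_n = -\nabla\times{\bf v}_b$ and their radial derivatives; smoothness up to $\partial B$ of the solutions $({\bf u}_i, p_i)$ gives the needed regularity and control of Fourier tails (so the data lie in some $\mathcal B_\kappa$-space after fixing $\kappa$ large). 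Second, compute the decay of the source term: if the modes decay like $1/r^{\alpha}$ with $\alpha$ slightly below $1+\beta_0$ but larger than $1$, then $F_n$ decays like $1/r^{2\alpha+2}$ by \textbf{Lemma \ref{lem_nonlinearity}} (the quadratic structure is what produces the improvement — $2\alpha+2 > \alpha + 4 > \alpha + $ anything as soon as $\alpha>1$). Third, feed this into the solution formulas: for $n=1$ the homogeneous Green function $r^{\zeta_1^-}$ has $-\Re(\zeta_1^-) > 2$ under \eqref{eq_conditionphimu} (automatic here since $\phi_0 > 21/10 > 3/2$), and the particular solution driven by $F_1$ decays two powers slower than $F_1$, i.e.\ like $1/r^{2\alpha}$; matching boundary data contributes the homogeneous part controlled by $M_b^{ref}$ (linearly) plus a $\gamma_n[F_n](1)$ term controlled by $|M_b^{ref}|^2$. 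This yields ${\bf v}_{b,1}$ decaying at rate $\min(-\Re(\zeta_1^-), 2\alpha) \geq 1+\beta_1$ for a suitable $\beta_1\in(1/2,1)$ after finitely many iteration steps; for the higher modes ${\bf v}_b^{(1)}$, the Green function $r^{-|n|}$ with $|n|\geq 2$ already decays fast, and the source-driven part now inherits the improved rate of the lower modes, giving the rate $1+2\beta_1-\varepsilon$ with the small loss $\varepsilon$ absorbing the logarithmic resonance cases in \eqref{eq_gamman} and the convergence of the sum over $|\ell|>1$. The derivative bounds follow from differentiating the explicit formulas (or from the $\mathcal U^2$-regularity of $\mathcal S_{\mu,\phi_0}$), and $\nabla^2$ from interior elliptic estimates applied to \eqref{w equintro}-type equations for ${\bf v}_b$ alone. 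The exponent $k$ counts the number of bootstrap steps needed to climb from $\beta_0$ to $\beta_1$, and the factor $|M_b^{ref}|^{2k}$ tracks how many times the quadratic nonlinearity was used.

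\textbf{Main obstacle.} The delicate point is not any single estimate but ensuring the iteration actually \emph{converges to} the target rates rather than merely improving them incrementally, and in particular that it does not stall at an intermediate rate due to a resonance $\zeta_n^- + 2 = -|n|$ (the logarithmic case in \eqref{eq_gamman}, footnoted in the excerpt to occur only for $\mu = 0$ and countably many $|n|\in(2,4)$) or at the ceiling $2\alpha$ when $\alpha$ is only slightly above $1$. The condition $\phi_0 > 21/10$ is exactly what guarantees $-\Re(\zeta_1^-) - 2$ is bounded below by a fixed positive amount even as $\mu\to 0$, so the homogeneous Green function does not obstruct reaching a rate strictly above $1$; getting from there to $1+\beta_1$ with $\beta_1$ close to $1$ then uses that once $\alpha > 1$ the source-driven rate $2\alpha$ strictly exceeds $\alpha$, so a geometric-type improvement closes. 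Handling the small parameter $\varepsilon$ and the uniform control of the infinitely many higher modes (summability of the Fourier tail, which is where the $\mathcal B_\kappa$-norms with $\kappa$ large enter) is the other bookkeeping hurdle. I expect the full argument to be deferred to an appendix, as the excerpt indicates the bootstrap is treated there.
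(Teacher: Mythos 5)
Your plan is essentially the paper's proof: the paper likewise expands in Fourier modes, proves a gain-of-decay lemma for the explicit Green-function solve of \eqref{odes} treating $n=0$, $|n|=1$ and $|n|\geq 2$ separately (\textbf{Lemma \ref{lem_decaynmode}}), proves a mode-separated decay estimate for the convolution $F_n$ (\textbf{Lemma \ref{lem_decayconvol}}), and bootstraps finitely many times, with $\phi_0>21/10$ ensuring $-\Re(\zeta_1^-)>5/2$ and the $\varepsilon$-loss absorbing the logarithmic resonance $\xi_n^-+2+2\alpha=0$, exactly as you describe. The only cosmetic difference is that the paper proves these two bespoke lemmas rather than quoting \textbf{Lemma \ref{lem_nonlinearity}} and \textbf{Lemma \ref{lem_analysislinearproblem}}, whose uniform-in-$n$ statements are not sharp enough to distinguish the $0$-, first- and higher-mode rates --- a refinement your key steps in any case supply explicitly.
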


We postpone the proof of {\bf Proposition \ref{prop_decaymode1}} to {\bf Appendix \ref{app_decaymode1}}. We state the result for ${\bf v}_b$ but a corresponding one also holds for ${\bf v}_a.$
Since $C_0 M^{ref} + C_1 |M^{ref}|^{2k}$ goes to $0$ with $M^{ref}$ whatever the constants $C_0, \, C_1$ and exponent $k,$ we include the control induced by {\bf Proposition \ref{prop_decaymode1}} in the constants $M_{a}^{ref}$ and $M_{b}^{ref}$ and make the slight abuse not to change notations. Hence, below we
fix $\beta_1 > 1/2.$ Remarking that $3 < 2+2\beta_1 < 1+4\beta_1,$ we can then choose 
$\alpha_0$ and $\beta^{(1)}$ to be defined by:
\[
\alpha_0 \in (3, \min (3 + 2\beta_0, 2\phi_0 -1,2+2\beta_1))  \qquad
\beta_1 < 1 < \beta^{(1)} := \dfrac{\alpha_0 - 1}{2} < \dfrac{1}{2} +\beta_1 < 2\beta_1,
\]
and we set:
{
\[
\begin{aligned}
M_{i}^{ref} :=  & \sup_{|x| > 1} |x|^{1+\beta_0}\left( \sum_{\ell=0}^{2} |x|^{\ell}|\nabla^{\ell}{\bf v}_i(x)|\right) \\
& +  \sup_{|x| > 1} \left( |x|^{1+\beta_1} (|{\bf v}_{i,1}(x)| + |x| |\nabla {\bf v}_{i,1}(x)|  + |x|^{1+\beta^{(1)}} (|{\bf v}_{i}^{(1)}(x)| + |x| |\nabla{\bf v}_{i}^{(1)}(x)| )\right).
\end{aligned}
\]}
With these (new) notations, our aim is to prove that, if $M_{a}^{ref} + M_{b}^{ref} + |\mu|$ is sufficiently small then ${\bf u}_a = {\bf u}_b.$ For this, we introduce
\begin{equation*}
	\mathbf{w}=\mathbf{u}_{a}-\mathbf{u}_{b} = \mathbf{v}_{a}-\mathbf{v}_{b}, \qquad q=p_{a}-p_{b},
\end{equation*}%
and, we note that this pair satisfies
\begin{eqnarray}	\label{w equ}
	\left( \mathbf{w}\cdot \nabla \right) \mathbf{u}_{a}+\left( \mathbf{u}%
	_{b}\cdot \nabla \right) \mathbf{w}+\nabla q &=&\Delta \mathbf{w,}
 \\
	\text{div}\mathbf{w} &\mathbf{=}&0,  \notag
\end{eqnarray}
with vanishing boundary conditions on $\partial B.$  We look then for a contraction property on ${\bf w}$ in an appropriate metrics. 
It turns out that we will have to use different norms  for the $0$ mode $w_0,$ the first modes $(w_c^{(n)},w_{s}^{(n)})$ and $(w_c^{(t)},w_s^{(t)})$ and for the remainder ${\bf w}^{(1)}.$ 
To this end, we extract for each of these modes a proper equation in which the other modes are involved in a nonlinear term only. The various results are gathered in the following propositions. Concerning the zero mode, we obtain:
\begin{proposition} \label{prop_0mode}
There exists a constant $C(\alpha_0,\phi_0)$ depending on $\alpha_0$ and $\phi_0$ such that:
\begin{multline}
		\int_{1}^{\infty} r^{\alpha_0} |\partial_r w_0|^{2}\mathrm{d}r +
		\int_{1}^{\infty} \dfrac{|w_0|^2}{r^2} r^{\alpha_0}\mathrm{d}r \\ \leq
		C(\alpha_0,\phi_0) [M_{a}^{ref} + M_{b}^{ref}]^2 
		\left[ 
		\int_{1}^{\infty} \dfrac{|{\bf w}_1|^2}{r^2}
		 r^{\alpha_0 -2 \beta_1}{\rm d}r+
		\int_{1}^{\infty} \dfrac{|{\bf w}^{(1)}|^2}{r^2}
		 r^{\alpha_0 -2\beta^{(1)}}{\rm d}r
		 \right].
\end{multline}
\end{proposition}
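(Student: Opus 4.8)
The strategy is a standard energy estimate for the zero mode of the difference equation \eqref{w equ}, but performed with the weight $r^{\alpha_0}$ rather than with a bare multiplier, and with careful tracking of which Fourier modes enter the nonlinear terms. First I would project equation \eqref{w equ} onto the zero angular mode. Since ${\bf w}_0 = w_0(r){\bf e}_\theta$ and $\mathrm{div}\,{\bf w}_0 = 0$ automatically, the zero mode of the equation reduces (after taking the $\theta$-average of the ${\bf e}_\theta$-component and eliminating the pressure, whose zero mode is radial hence contributes only through $\partial_r q_0$ in the ${\bf e}_r$ component) to a linear second-order ODE for $w_0$ on $(1,\infty)$ of the form
\[
\partial_{rr} w_0 + \frac{\phi_0+1}{r}\partial_r w_0 - \frac{1}{r^2} w_0 = N_0(r),
\]
where $N_0$ is the $\theta$-average of the ${\bf e}_\theta$-component of $({\bf w}\cdot\nabla){\bf u}_a + ({\bf u}_b\cdot\nabla){\bf w}$. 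The point — already flagged in the introduction — is that ${\bf u}_{ref}[\phi_0,\mu]$ contributes to $N_0$ a term proportional to $(\phi_0 w_0 + \ldots)/r^2$ that must be moved to the left-hand side, while the genuinely nonlinear part of $N_0$ involves only products of non-zero modes of ${\bf v}_a,{\bf v}_b$ against non-zero modes of ${\bf w}$ (this is the crucial "the $0$-mode perturbation does not depend on the $0$-mode" observation); the self-interactions ${\bf w}_0\cdot\nabla(\cdot)$ and the interaction of ${\bf w}_0$ with the radial reference flow produce either zero or terms that can be absorbed on the left.

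Next I would multiply this ODE by $r^{\alpha_0} \overline{w_0}$ and integrate over $(1,\infty)$. Integration by parts in the second-derivative term gives $\int_1^\infty r^{\alpha_0}|\partial_r w_0|^2\,{\rm d}r$ plus a boundary term at $r=1$ which vanishes because ${\bf w}$ has vanishing boundary data on $\partial B$, plus a term $-\frac{\alpha_0}{2}\int r^{\alpha_0 - 2}\partial_r(|w_0|^2)\,{\rm d}r$ coming from the weight, which after a further integration by parts produces $+\frac{\alpha_0(\alpha_0-2)}{2}\int r^{\alpha_0-2}|w_0|^2\,{\rm d}r$ type contributions (here I use that $\alpha_0 > 3 > 2$ so this is a good sign, and that the decay of $w_0$ at infinity — which follows from \eqref{eq_Miref} together with the choice $\alpha_0 < 3+2\beta_0$ — kills the boundary term at infinity). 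Collecting the weight terms, the first-order term $\frac{\phi_0+1}{r}\partial_r w_0$, and the $-w_0/r^2$ term, the left-hand side becomes a quadratic form $\int r^{\alpha_0}|\partial_r w_0|^2\,{\rm d}r + c(\alpha_0,\phi_0)\int r^{\alpha_0-2}|w_0|^2\,{\rm d}r$; the constant $c(\alpha_0,\phi_0)$ in front of the zeroth-order term is positive precisely in the admissible range (this is where $\phi_0 \leq 3$ and $\alpha_0 \in (3,2\phi_0-1)$ are used — one checks $c = \tfrac14(\alpha_0-2)(2\phi_0 - \alpha_0) + (\text{something} \geq 0) > 0$). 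Hence the left-hand side controls both terms appearing in the statement, up to a fixed positive constant.

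Finally I would estimate the right-hand side $\int_1^\infty r^{\alpha_0} N_0 \overline{w_0}\,{\rm d}r$. Using the pointwise bounds from the (redefined) $M_i^{ref}$ — namely $|{\bf v}_{i,1}| \lesssim M_i^{ref} r^{-1-\beta_1}$, $|\nabla{\bf v}_{i,1}| \lesssim M_i^{ref} r^{-2-\beta_1}$ and the analogous bounds with $\beta^{(1)}$ for ${\bf v}_i^{(1)}$ — every term of $N_0$ is a product of such a factor with one derivative of a non-zero mode of ${\bf w}$, so $|N_0(r)| \lesssim (M_a^{ref}+M_b^{ref})\, r^{-1-\beta_1}\big( |{\bf w}_1|/r + |\nabla{\bf w}_1| \big) + (M_a^{ref}+M_b^{ref})\, r^{-1-\beta^{(1)}}\big(|{\bf w}^{(1)}|/r + |\nabla{\bf w}^{(1)}|\big)$; after an integration by parts transferring a derivative off ${\bf w}$ where needed (legitimate since all boundary terms vanish), one bounds $\int r^{\alpha_0} N_0\overline{w_0}$ by Cauchy–Schwarz, splitting the weight as $r^{\alpha_0} = r^{\alpha_0/2 - 1 + \beta_1}\cdot r^{\alpha_0/2 + 1 - \beta_1}$ for the ${\bf w}_1$ piece and $r^{\alpha_0/2 - 1 + \beta^{(1)}}\cdot r^{\alpha_0/2 + 1 - \beta^{(1)}}$ for the ${\bf w}^{(1)}$ piece; the factors absorbing $\overline{w_0}$ reassemble into $\big(\int r^{\alpha_0-2}|w_0|^2\big)^{1/2}$, while the remaining factors reassemble (after using $\beta^{(1)} = (\alpha_0-1)/2$, i.e. $\alpha_0 - 2\beta^{(1)} = 1$, and the corresponding arithmetic with $\beta_1$) into exactly $\big(\int r^{\alpha_0 - 2\beta_1}|{\bf w}_1|^2/r^2\big)^{1/2}$ and $\big(\int r^{\alpha_0-2\beta^{(1)}}|{\bf w}^{(1)}|^2/r^2\big)^{1/2}$. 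Absorbing the $\big(\int r^{\alpha_0-2}|w_0|^2\big)^{1/2}$ factor into the left-hand side via Young's inequality and relabelling the constant yields the claimed estimate.

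\medskip

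\emph{Main obstacle.} The delicate point is not any single estimate but the bookkeeping of the nonlinear term $N_0$: one must verify that, after subtracting the reference-flow contribution that gets moved left, no term of $N_0$ contains $w_0$ itself (otherwise the contraction cannot be closed mode-by-mode), and that every surviving term indeed carries the decay rate $r^{-1-\beta_1}$ or $r^{-1-\beta^{(1)}}$ with the derivative falling on a non-zero mode of ${\bf w}$ in a way compatible with the weight splitting above. The choice $\alpha_0 < 2+2\beta_1$ is exactly what makes the ${\bf w}_1$-term integrable, and $\alpha_0 < 2\phi_0 - 1$ is what makes the left-hand-side quadratic form positive; keeping these two constraints simultaneously satisfiable is why $\phi_0 > 3/2$ (in fact more) is needed. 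I would treat the ${\bf w}_1$ and ${\bf w}^{(1)}$ contributions to $N_0$ separately and, for each, write $N_0$ explicitly in polar coordinates before applying Cauchy–Schwarz, since the precise structure (which component, which derivative) determines where the integration by parts is needed.
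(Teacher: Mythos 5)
Your overall architecture matches the paper's: project \eqref{w equ} onto the zero angular mode to get a second-order ODE for $w_0$ (the paper's \eqref{eq_mode0}), observe via orthogonality that the nonlinear source couples only ${\bf w}_1$ with ${\bf v}_{i,1}$ and ${\bf w}^{(1)}$ with ${\bf v}_i^{(1)}$, put the part of the source carrying a radial derivative of ${\bf w}$ in divergence form $\mathcal F_0^{(0)}+\partial_r\mathcal F_0^{(1)}$ so it can be integrated by parts, multiply by $r^{\alpha_0}\overline{w_0}$, and close by Cauchy--Schwarz with exactly the weight splitting you describe. All of that is the paper's {\bf Lemma \ref{lem_source0}} and the skeleton of {\bf Lemma \ref{lem_w0}}.

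There is, however, one genuine gap at the heart of the coercivity step. After multiplying by $r^{\alpha_0}\overline{w_0}$ and integrating by parts, the zeroth-order coefficient in the resulting quadratic form is
\[
-\left[(\phi_0-1)+\tfrac{1}{2}(\alpha_0-\phi_0-1)(\alpha_0-1)\right],
\]
and a short computation shows this bracket is $\geq 1$ for every $\alpha_0\in(3,2\phi_0-1)$ and $\phi_0\in(21/10,3]$ (it equals $1$ at $\alpha_0=3$ and is increasing in $\alpha_0$ on that range). So the left-hand side is $\int r^{\alpha_0}|\partial_r w_0|^2 - c_1\int r^{\alpha_0-2}|w_0|^2$ with $c_1\geq 1$: your claim that the zeroth-order coefficient is directly positive is false, and the form is not manifestly coercive. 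The missing ingredient is the refined weighted Hardy inequality
\[
\int_{1}^{M}\dfrac{|w|^2}{r^2}\,r^{\alpha_0}\,{\rm d}r\;\leq\;\dfrac{4}{(\alpha_0-1)^2}\int_{1}^{M}|\partial_r w|^2\,r^{\alpha_0}\,{\rm d}r,
\]
valid since $w_0(1)=0$ and $\alpha_0>1$; the condition $\alpha_0\in(3,2\phi_0-1)$ is precisely $(\alpha_0-1-\phi_0)^2<(\phi_0-2)^2$, i.e.\ $4c_1<(\alpha_0-1)^2$, which makes the combined form positive definite with margin proportional to $(\alpha_0-3)(2\phi_0-1-\alpha_0)$ (not to your claimed $(\alpha_0-2)(2\phi_0-\alpha_0)/4$). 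Note the inequality is needed twice: once to absorb the negative zeroth-order term, and once more because the proposition asks you to control $\int r^{\alpha_0-2}|w_0|^2$ itself, not only the gradient term. With this inequality inserted, the rest of your argument goes through as in the paper.
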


As for the first modes, we prove:
\begin{proposition} \label{prop_1mode}
Let $\sigma \in \{s,c\}$ and $d \in \{n,t\}.$ There exists a universal constant $C$ such that:
\begin{multline}
	\sum_{\sigma=c,s}\sum_{d=n,t} \int_{1}^{\infty} r^{2} \left( r^2|\partial_{rr} w_{\sigma}^{(d)}|^{2} + {|\partial_{r} w_{\sigma}^{(d)}|^2} + \dfrac{|w_{\sigma}^{(d)}|^2}{r^2} \right)
		 {\rm d}r  \\ \leq
		C [M_{a}^{ref} + M_{b}^{ref} + |\mu|]^2 
\left[ \int_{1}^{\infty} \dfrac{|{\bf w}_0|^2 + |{\bf w}^{(1)}|^2}{r^2} r^{2(1-\beta_1)}{\rm d}r +  \int_{1}^{\infty} |{\bf w}_1|^2 {\rm d}r \right].
\end{multline}
\end{proposition}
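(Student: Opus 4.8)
The plan is to project the difference equation \eqref{w equ} onto the first angular Fourier mode and derive a weighted energy estimate for the four scalar profiles $w_{\sigma}^{(d)}$, $\sigma \in \{c,s\}$, $d \in \{n,t\}$. First I would write $\mathbf{w}_1 = \nabla^{\bot}[\varphi_1 e^{i\theta} + \varphi_{-1}e^{-i\theta}]$ so that the four profiles are expressed through $\varphi_{\pm 1}$, and record the incompressibility constraint which ties $w_{\sigma}^{(n)}$ to $w_{\sigma}^{(t)}$ (so really only two independent scalar functions survive per value of $\sigma$). Inserting this into \eqref{w equ} and collecting the $e^{\pm i\theta}$ coefficients produces, after eliminating the pressure (take the curl, exactly as in Section \ref{sec_existence}), a linear second-order ODE system on $(1,\infty)$ for the first-mode part of the stream function $\varphi_{\pm 1}$, of the form $L_{\phi_0,\mu}[\varphi_{\pm 1}] = (\text{first-mode part of the nonlinear source})$, where $L_{\phi_0,\mu}$ is the linearization around ${\bf u}_{ref}[\phi_0,\mu]$ restricted to frequency one — this is precisely the operator analyzed through $\zeta_1^{\pm}$ earlier. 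The boundary conditions are homogeneous at $r=1$ (since ${\bf w}$ vanishes on $\partial B$) and the decay at infinity is inherited from \eqref{eq_Miref}.

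The heart of the argument is an elliptic-type a priori estimate for $L_{\phi_0,\mu}$ on frequency one with the weight $r^2$ built into the norm on the left-hand side: I would multiply the curl-equation for the first mode by $r^{2}\overline{w_{\sigma}^{(d)}}$ (or the corresponding multiplier acting on the vorticity of the first mode) and integrate over $(1,\infty)$, integrating by parts and using the homogeneous boundary conditions to kill boundary terms. Because we are at frequency $|n|=1$ and $\phi_0 \in (21/10,3]$, the coefficient $in\mu + n^2 = i\mu + 1$ has real part $1>0$, so the terms coming from the $-\tfrac{in\mu+n^2}{r^2}$ potential and from $\tfrac{(\phi_0+1)}{r}\partial_r$ combine into a coercive quadratic form on the left-hand side controlling $\int r^{4}|\partial_{rr}w_{\sigma}^{(d)}|^2 + \int r^2 |\partial_r w_{\sigma}^{(d)}|^2 + \int |w_{\sigma}^{(d)}|^2$; the smallness of $|\mu|$ guarantees that the antisymmetric $\mu$-term can be absorbed. (One must be a little careful: the weight $r^2$ is the "critical" one at which the frequency-one operator is not coercive for generic data, which is why \emph{no} term involving $w_{\sigma}^{(d)}$ itself appears on the right and why one exploits the vanishing flux/circulation of ${\bf w}$ — the zero-average conditions remove the would-be kernel.)

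Next I would estimate the nonlinear source. The first-mode component of $(\mathbf{w}\cdot\nabla)\mathbf{u}_a + (\mathbf{u}_b\cdot\nabla)\mathbf{w}$ is, after splitting $\mathbf{u}_a = {\bf u}_{ref}[\phi_0,\mu] + {\bf v}_a$ and $\mathbf{u}_b = {\bf u}_{ref}[\phi_0,\mu] + {\bf v}_b$, a sum of terms each of which is bilinear with one factor being ${\bf v}_a$, ${\bf v}_b$ or $\mu\,{\bf e}_\theta/r$ (hence of size $\lesssim (M_a^{ref}+M_b^{ref}+|\mu|)\,r^{-1-\beta_1}$ or $r^{-2}$ near the appropriate weight) and the other factor being a component of $\mathbf{w} = \mathbf{w}_0 + \mathbf{w}_1 + \mathbf{w}^{(1)}$ or its gradient. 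Crucially, by the angular selection rule $e^{i k\theta}\cdot e^{i\ell\theta}$, the first-mode part of a product only sees $(k,\ell)$ with $k+\ell=\pm1$, so the $\mathbf{w}_1$-factor is always multiplied by a $\mathbf{w}_0$ or $\mathbf{w}^{(1)}$ contribution (which is why the $\int |{\bf w}_1|^2$ term on the right carries the weight $1$ with no extra decay — it pairs with the small reference coefficient only), whereas the $\mathbf{w}_0$ and $\mathbf{w}^{(1)}$ factors come weighted by $r^{2(1-\beta_1)}$ reflecting the extra decay of ${\bf v}_i$ relative to the critical power. Bounding $L^2$-norms of these products by the weighted $L^2$-norms appearing on the right of the claimed inequality, using Cauchy–Schwarz and the pointwise bounds from $M_i^{ref}$, and feeding the result into the coercive estimate of the previous step, closes the proposition with a universal constant $C$ (all $\phi_0$- and $\alpha_0$-dependence having been absorbed since the weight is the fixed power $r^2$ and $\phi_0$ ranges over a compact set).

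The main obstacle I anticipate is the coercivity at the critical weight $r^2$: one has to be genuinely careful that after eliminating the pressure and imposing the divergence-free and zero-circulation constraints, the boundary-term bookkeeping at $r=1$ and $r=\infty$ leaves a strictly positive-definite quadratic form, rather than merely a nonnegative one with a one-dimensional degeneracy — the vanishing of ${\bf w}$ on $\partial B$ together with the decay \eqref{eq_Miref} is exactly what is needed, but verifying it requires tracking the homogeneous solutions $r^{\pm 1}$ and $r^{\zeta_1^{\pm}}$ of the two coupled ODEs and checking that none of them is admissible. A secondary technical point is controlling $\partial_{rr} w_{\sigma}^{(d)}$ (not just $w$ and $\partial_r w$): this is recovered directly from the ODE, writing $\partial_{rr}w = -\tfrac{\phi_0+1}{r}\partial_r w + \tfrac{i\mu+1}{r^2}w + F$, and then $r^2\partial_{rr}w$ is controlled in $L^2(r^2\,{\rm d}r)$ by the terms already bounded plus the source, which is why the source estimate must be done in a slightly stronger norm than naively expected.
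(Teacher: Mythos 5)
Your overall plan (project onto the first angular mode, eliminate the pressure, run a weighted energy estimate with weight $r^{2}$, and use the angular selection rule $k+\ell=\pm1$ to sort the nonlinearity into the three blocks appearing on the right) matches the paper's. But the central step --- the coercivity --- is where your argument has a genuine gap, and the mechanism you invoke is not the one that actually works. You propose to take the curl and work with the second-order vorticity operator at frequency one, attributing positivity to $\Re(i\mu+1)=1>0$ together with the smallness of $|\mu|$, and you then worry about a ``would-be kernel'' at the critical weight removed by zero-average conditions. This does not produce the stated estimate: the proposition controls the \emph{velocity} profiles $w_{\sigma}^{(d)}$ (including $\int r^{4}|\partial_{rr}w_{\sigma}^{(d)}|^{2}$), and passing from a weighted bound on the first-mode vorticity back to these quantities requires inverting $\partial_{rr}\gamma_{1}+\tfrac1r\partial_r\gamma_1-\tfrac{1}{r^2}\gamma_1=-w_1$ in the exterior domain, a Biot--Savart step at the resonant frequency $|n|=1$ that you do not address and that is delicate precisely because the homogeneous solutions are $r^{\pm1}$. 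Moreover your source term would then involve derivatives of the vorticity, which are not controlled by the right-hand side of the claimed inequality.

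What the paper actually does is stay at the level of the velocity: it tests the momentum equation against $\cos\theta\,{\bf e}_r$ and against $\sin\theta\,{\bf e}_\theta$, eliminates the pressure by applying $\partial_r(r\,\cdot)$ to the second projection and adding, and then uses the divergence constraint $\partial_r(rw_c^{(n)})+w_s^{(t)}=0$ to reduce everything to a single \emph{fourth-order} scalar ODE for $w_c^{(n)}$ (equation \eqref{prime eqns}), with the homogeneous boundary conditions $w_c^{(n)}(1)=\partial_r w_c^{(n)}(1)=0$ coming from the no-slip condition plus the same divergence constraint. Multiplying this fourth-order equation by $r^{2}w$ and integrating by parts yields exactly
\[
-\int_1^\infty |\partial_r^2 w|^2 r^4\,{\rm d}r-\frac{\phi_0}{2}\int_1^\infty|\partial_r w|^2 r^2\,{\rm d}r-\Bigl(\phi_0-\frac32\Bigr)\int_1^\infty |w|^2\,{\rm d}r,
\]
and the whole point is that all three coefficients are $\leq -1/2$ \emph{because} $\phi_0>2$; no smallness of $\mu$, no kernel issue, and no positivity of $\Re(i\mu+1)$ enters. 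Your write-up never identifies this as the source of coercivity and never uses the hypothesis $\phi_0>2$ where it is actually needed, so as it stands the key inequality is unproven. The second-derivative control you propose to recover ``from the ODE'' also comes out differently: in the paper it is already present in the integrated-by-parts quadratic form, not reconstructed a posteriori.
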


Finally, for the remainder, we have:
\begin{proposition} \label{prop_largemodes}
There exists a constant $C > 0$ such that
\[
\begin{aligned}
& \int_{\mathbb R^2 \setminus \overline{B}} |\nabla {\bf w}^{(1)}|^2 {\rm d}x+ \int_{\mathbb R^2 \setminus \overline{B}} \dfrac{|{\bf w}^{(1)}|^2}{|x|^2} {\rm d}x\\
& \leq C\left( M_{a}^{ref} + M_{b}^{ref}  { + \dfrac{|M_{b}^{ref}|^2}{\eta}} + |\mu|\right) \int_{\mathbb R^2 \setminus \overline{B}} \dfrac{|{\bf w}|^2}{|x|^2}{\rm d}x + \dfrac{\eta}{2} \int_{\mathbb R^2 \setminus \overline{B}} |\nabla {\bf w}_0|^2{\rm d}x,
\end{aligned}
\]
 whatever $\eta \in (0,1].$

\end{proposition}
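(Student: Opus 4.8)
The plan is to obtain the estimate for the high-frequency remainder $\mathbf{w}^{(1)}$ by testing the difference equation \eqref{w equ} against $\mathbf{w}^{(1)}$ itself, which is a legitimate multiplier since $\mathbf{w}^{(1)}$ is divergence-free, decays fast enough at infinity by the improved bounds absorbed into $M_i^{ref}$, and vanishes on $\partial B$. Because the Fourier modes $|\ell|>1$ are mutually orthogonal over each circle and also orthogonal to $\mathbf{w}_0$ and $\mathbf{w}_1$, the viscous term $-\int \Delta \mathbf{w}\cdot \mathbf{w}^{(1)}$ reduces after integration by parts to $\int |\nabla \mathbf{w}^{(1)}|^2$, and the pressure term $\int \nabla q \cdot \mathbf{w}^{(1)}$ vanishes. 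So the identity becomes
\[
\int_{\mathbb R^2\setminus\overline B} |\nabla \mathbf{w}^{(1)}|^2\,{\rm d}x = -\int_{\mathbb R^2\setminus\overline B}\Big[(\mathbf{w}\cdot\nabla)\mathbf{u}_a + (\mathbf{u}_b\cdot\nabla)\mathbf{w}\Big]\cdot\mathbf{w}^{(1)}\,{\rm d}x,
\]
and the whole game is to bound the right-hand side by the quantities allowed in the statement. I would next split $\mathbf{u}_a = \mathbf{u}_{ref}[\phi_0,\mu] + \mathbf{v}_a$ and likewise for $\mathbf{u}_b$, separating the contribution of the reference profile from that of the (small) perturbations.

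The key analytic tool is a Hardy-type inequality on the exterior domain restricted to frequencies $|\ell|>1$: for such fields one has $\int |\mathbf{w}^{(1)}|^2/|x|^2 \lesssim \int |\nabla \mathbf{w}^{(1)}|^2$, with a constant that is uniform because the angular Laplacian contributes at least $4$ on these modes (this is precisely the ``default of the Hardy inequality reduces to the $0$ and first frequencies'' remark from the introduction). Granting this, the strategy for each term on the right-hand side is: (i) terms involving $\nabla \mathbf u_b \cdot \mathbf{w}$ and the reference profile, i.e. roughly $\int (\phi_0+\mu)|x|^{-2}|\mathbf{w}||\mathbf{w}^{(1)}|$ and $\int |x|^{-1}|\mathbf{v}_b|$-type weights acting on $|\nabla\mathbf{w}||\mathbf{w}^{(1)}|$, are estimated by Cauchy--Schwarz, pulling out $\sup |x|^{1+\beta_0}|\mathbf{v}_b|$ etc.\ (bounded by $M_b^{ref}$) or, for the genuinely $O(|x|^{-2})$ reference contribution, by distinguishing whether the tested factor is $\mathbf{w}_0$, $\mathbf{w}_1$ or $\mathbf{w}^{(1)}$ and using the Hardy inequality on the $\mathbf{w}^{(1)}$ piece while throwing the $\mathbf{w}_0$ piece into the $\tfrac{\eta}{2}\int|\nabla\mathbf{w}_0|^2$ slack term; (ii) the term $(\mathbf{w}\cdot\nabla)\mathbf{u}_a\cdot\mathbf{w}^{(1)}$ with $\mathbf{u}_a$ replaced by $\mathbf{v}_a$ is handled the same way, producing a factor $M_a^{ref}$; (iii) the genuinely quadratic-in-perturbation pieces that remain, coming from the interaction of $\mathbf{v}_b$ with itself in the advection, are controlled using the improved decay of $\mathbf{v}_b^{(1)}$ and $\mathbf{v}_{b,1}$ from Proposition~\ref{prop_decaymode1}, which is exactly where the $|M_b^{ref}|^2/\eta$ term is generated (Young's inequality with parameter $\eta$ to absorb one copy of $\int|\nabla\mathbf{w}^{(1)}|^2$ back to the left). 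Finally one adds $\int |\mathbf{w}^{(1)}|^2/|x|^2$ to both sides via the Hardy inequality, and, after choosing the smallness of $M_a^{ref}+M_b^{ref}+|\mu|$ to absorb the $\int|\nabla\mathbf{w}^{(1)}|^2$ appearing on the right, arrives at the claimed inequality with $\int |\mathbf{w}|^2/|x|^2$ on the right (using $|\mathbf{w}^{(1)}|\le|\mathbf{w}|$ pointwise after mode projection) plus the $\tfrac{\eta}{2}\int|\nabla\mathbf{w}_0|^2$ correction.

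The main obstacle, as flagged in the introduction, is the term carrying the full coefficient $\phi_0+\mu$ with weight $|x|^{-2}$: it cannot be made small and there is no global Hardy inequality to absorb it. The resolution here is that this dangerous interaction, when one factor is $\mathbf{w}^{(1)}$, is saved by the restricted Hardy inequality valid on frequencies $|\ell|>1$ (constant $\le 1/4$ from the angular eigenvalue), so that it can indeed be absorbed into $\int|\nabla\mathbf{w}^{(1)}|^2$ on the left; the leftover piece of that term, where the second factor is $\mathbf{w}_0$ (which does not enjoy a Hardy bound), is deliberately parked in the $\tfrac{\eta}{2}\int|\nabla\mathbf{w}_0|^2$ term of the statement — this is why the proposition is stated with that extra term and with a free parameter $\eta$, to be used later when the three propositions are combined. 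A secondary technical point is keeping the bookkeeping of weighted norms consistent: the weights $r^{2(1-\beta_1)}$ and $r^{\alpha_0-2\beta^{(1)}}$ appearing in Propositions~\ref{prop_0mode} and \ref{prop_1mode} must match the decay rates furnished by Proposition~\ref{prop_decaymode1}, and one has to check that the exponents chosen ($\alpha_0\in(3,\min(3+2\beta_0,2\phi_0-1,2+2\beta_1))$, $\beta^{(1)}=(\alpha_0-1)/2$) make every integral that is invoked actually finite; this is routine but is where the constraint $\phi_0>21/10$ ultimately enters through the bootstrap behind Proposition~\ref{prop_decaymode1}.
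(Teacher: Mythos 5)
Your overall architecture matches the paper's: an energy estimate tested against the high modes, orthogonality to kill the pressure and reduce the viscous term, the $\mathbf{w}_0$-interaction parked in the $\tfrac{\eta}{2}\int|\nabla\mathbf{w}_0|^2$ slack via Young's inequality (which is indeed where $|M_b^{ref}|^2/\eta$ comes from), and a truncation at infinity to justify the integrations by parts. (One cosmetic difference: the paper tests against $\mathbf{w}^{(0)}$, all nonzero modes, and then shows the resulting quadratic form splits as $Q_1+Q^{(1)}$ with $Q_1\ge 0$; testing against $\mathbf{w}^{(1)}$ as you propose is essentially equivalent for the orthogonality bookkeeping.)

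However, there is a genuine gap at the single most delicate point, namely the absorption of the reference-profile term $\phi_0\int(|w^{(1)}_r|^2-|w^{(1)}_\theta|^2)|x|^{-2}$. You claim this is saved by a mode-restricted Hardy inequality ``with constant $\le 1/4$ from the angular eigenvalue.'' That constant is correct for a \emph{scalar} supported on frequencies $|\ell|\ge 2$, but not for the \emph{vector field} $\mathbf{w}^{(1)}$: in polar coordinates the angular derivative mixes components, $\tfrac1r\partial_\theta\mathbf{w}=\tfrac1r[(\partial_\theta w_r-w_\theta)\mathbf{e}_r+(\partial_\theta w_\theta+w_r)\mathbf{e}_\theta]$, and for a single mode $\ell$ one computes $\int_{\mathbb T}|\partial_\theta\mathbf{w}|^2\,{\rm d}\theta\ge(|\ell|-1)^2\int_{\mathbb T}|\mathbf{w}|^2\,{\rm d}\theta$, with equality attainable at $|\ell|=2$. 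So the best crude bound is $\int|\mathbf{w}^{(1)}|^2|x|^{-2}\le\int|\nabla\mathbf{w}^{(1)}|^2$ with constant $1$, and a coefficient $\phi_0\in(2,3]$ cannot be absorbed this way — the estimate loses by a factor of at least $2$. This is precisely why the paper does not use an absolute-value bound at all: Lemma \ref{lem_w2} proves coercivity of the full quadratic form $\int|\nabla\mathbf{w}^{(0)}|^2+\phi_0\int(|w^{(0)}_r|^2-|w^{(0)}_\theta|^2)|x|^{-2}$ by passing to the stream function $\varphi$, exploiting the sign structure $|w_r|^2-|w_\theta|^2$ (the $|w_r|^2$ part \emph{helps}), performing the integration-by-parts identity that rewrites $\int|\partial_r\varphi^{(1)}|^2/r^2$ in terms of $|\partial_{\theta\theta}\varphi^{(1)}/r^2+\partial_r\varphi^{(1)}/r|^2$, and invoking Poincar\'e--Wirtinger on modes $|\ell|\ge2$. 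That computation is also exactly where the hypothesis $\phi_0\le 3$ enters (via $2-\phi_0/4\ge 5/4$ and the positivity of $Q_1$), a constraint your proposal never uses for this proposition. Without an argument of this type, the central absorption step in your proof fails.
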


The proofs of these propositions are detailed in the following subsections. Before going to these technical details, we explain now why they imply our uniqueness result.

\medskip

With our previous choices for $\beta_1,$  $\beta^{(1)}$  and $\alpha_0,$
we apply  {\bf Proposition \ref{prop_0mode}}. We obtain there exists a constant $C_0$
such that:
\begin{align}\notag 
\int_{\mathbb R^2 \setminus \overline{B}} \dfrac{|{\bf w}_{0}|^2}{|x|^2} |x|^{\alpha_0-1} {\rm d}x  \leq& 
		C(\alpha_0,\phi_0) [M_{a}^{ref} + M_{b}^{ref}]^2 
		\left[ 
		\int_{1}^{\infty} \int_{\mathbb T}\dfrac{|{\bf w}_1|^2}{r^2}
		 r^{\alpha_0 -2 \beta_1}{\rm d}r{\rm d}\theta \right.\\
		 & + \left.
		\int_{1}^{\infty}\int_{\mathbb T} \dfrac{|{\bf w}^{(1)}|^2}{r^2}
		 r^{\alpha_0 -2\beta^{(1)}}{\rm d}r{\rm d}\theta
		 \right].\label{eq_mode0-0}
\end{align}
On the other hand, applying {\bf Proposition \ref{prop_1mode}}, we get:
\begin{equation} \label{eq_mode1}
\begin{aligned}
\int_{\mathbb R^2 \setminus \overline{B}} \dfrac{|{\bf w}_1|^2}{|x|} dx \leq & C(\alpha_0,\phi_0) [M_{a}^{ref} + M_{b}^{ref} + |\mu|]^2 
\left[ \int_{1}^{\infty} \int_{\mathbb T}\dfrac{|{\bf w}_0|^2 + |{\bf w}^{(1)}|^2}{r^2} r^{2(1 -\beta_1)}{\rm d}r{\rm d} \theta\right.  
\\
& \left. +  \int_{1}^{\infty}\int_{\mathbb T} \dfrac{|{\bf w}_1|^2}{r^2} r^{2}{\rm d}r{\rm d}\theta \right].
\end{aligned}
\end{equation}
We also apply {\bf Proposition \ref{prop_largemodes}} with $\eta =1$ to yield:
\begin{equation} \label{eq_mode>=2}
\begin{aligned}
\int_{\mathbb R^2 \setminus \overline{B}} \dfrac{|{\bf w}^{(1)}|^2}{|x|^2} {\rm d}x  \leq & C\left(  M_{a}^{ref} + M_{b}^{ref} {+|M_b^{ref}|^2}
  + |\mu|\right) \int_{\mathbb R^2 \setminus \overline{B}} \dfrac{|{\bf w}|^2}{|x|^2}{\rm d}x \\
  & {+ \dfrac{1}{2}\int_{\mathbb R^2 \setminus \overline{B} }|\nabla {\bf w}_0|^2 {\rm d}x }   
\end{aligned}
\end{equation}
\medskip
Finally, combining \eqref{eq_mode0-0}-\eqref{eq_mode1}-\eqref{eq_mode>=2} we have since $\alpha_0 > 1$:
\begin{align*}
 & {\frac 12} \int_{\mathbb R^2 \setminus \overline{B}} \dfrac{|{\bf w}_{0}|^2}{|x|^2} |x|^{\alpha_0-1} {\rm d}x 
  + \int_{\mathbb R^2 \setminus \overline{B}} \dfrac{|{\bf w}_1|^2}{|x|} {\rm d}x  + \int_{\mathbb R^2 \setminus \overline{B}} \dfrac{|{\bf w}^{(1)}|^2}{|x|^2}  {\rm d}x \\
& \leq  C\left(  M_{a}^{ref} + M_{b}^{ref} {+ |M_b^{ref}|^2 } + |\mu|\right)
\left( 
\int_{1}^{\infty}\int_{\mathbb T} \dfrac{|{\bf w}_0|^2}{r^2} r^{2(1 -\beta_1)}{\rm d}r{\rm d}\theta \right.\\
& +
\int_{1}^{\infty} \int_{\mathbb T} \dfrac{|{\bf w}_1|^2}{r^2}
		 (r^{\alpha_0 -2 \beta_1} + r^2){\rm d}r{\rm d}\theta		 
+
	\int_{1}^{\infty}\int_{\mathbb T} \dfrac{|{\bf w}^{(1)}|^2}{r^2}
		 (r^{\alpha_0 -2\beta^{(1)}} + r^{2(1-\beta_1)}){\rm d}r{\rm d}\theta \\
& \left.		
+
	 \int_{\mathbb R^2 \setminus \overline{B}}\dfrac{|{\bf w}|^2}{|x|^2}{\rm d}x 
\right).
\end{align*}
We remark that, by our choice for $\alpha_0$ we have
$2(1-\beta_1) < \alpha_0,$ $\alpha_0-2 \beta_1 < 2,$ and   
$\max(\alpha_0 -2\beta^{(1)},\, 2(1-\beta_1)) \leq 1.$ This entails that:
\[
\begin{aligned}
\int_{1}^{\infty}\int_{\mathbb T} \dfrac{|{\bf w}_0|^2}{r^2} r^{2(1- \beta_1) }{\rm d}r{\rm d}\theta & \leq  \int_{\mathbb R^2 \setminus \overline{B}} \dfrac{|{\bf w}_0|^2}{|x|^2} |x|^{\alpha_0-1}{\rm d}x, \\
 \int_{1}^{\infty}\int_{\mathbb T} \dfrac{|{\bf w}_1|^2}{r^2} (r^{\alpha_0 - 2\beta_1} + r^2) {\rm d}r{\rm d}\theta & \leq \int_{\mathbb R^2 \setminus \overline{B}} \dfrac{|{\bf w}_1|^2}{|x|} {\rm d}x,  \\
\int_{1}^{\infty}\int_{\mathbb T} \dfrac{|{\bf w}^{(1)}|^2}{r^2} (r^{\alpha_0-2\beta^{(1)}} + r^{2(1- \beta_1)}){\rm d}r{\rm d}\theta & \leq   \int_{\mathbb R^2 \setminus \overline{B}} \dfrac{|{\bf w}^{(1)}|^2}{|x|^2}{\rm d}x.
\end{aligned}
\]
We thus obtain that ${\bf w} \equiv 0$ when $\mu$ and $M_{a}^{ref},M_{b}^{ref}$ are sufficiently small.
This completes the proof.

\subsection{Proof of Proposition \ref{prop_0mode}} \label{sec_mode0}
The proof splits into three parts. At first, we obtain a differential equation satisfied by $w_0$ by projecting \eqref{w equ} on the mode $0.$ We extract a linear and a nonlinear part in this equation. We then compute a priori estimates for solutions to the linear problem depending on the nonlinearity. We conclude by showing a priori estimates for the nonlinearity.   

\medskip

We start by extracting a differential equation for $w_0.$
Multiplying \eqref{w equ} by $\mathbf{e}_{\theta}$ and integrating on any circle of radius $r,$ for fixed $r > 1,$ gives that $w_0$ satisfies the equation:
\begin{equation} \label{eq_mode0}
	\dfrac{1}{r^{\phi_0+1}}\partial_{r}[r^{\phi_0+1} \partial_r w_{0}] + \dfrac{%
		\phi_0-1}{r^2} w_0 = {\mathcal{F}_0},
	 \qquad w_0(1) = w_0(\infty) = 0,
\end{equation}
where
\begin{equation*}
	{\mathcal{F}_0} = \dfrac{1}{2\pi} \int{\mathbb T} (\mathbf{w}^{(0)} \cdot \nabla \mathbf{v}%
	_a^{(0)}) \cdot \mathbf{e}_{\theta} \mathrm{d}\theta + \dfrac{1}{2\pi}
	\int_{\mathbb T} ( \mathbf{v}_b^{(0)} \cdot \nabla \mathbf{w}^{(0)}) \cdot \mathbf{e}%
	_{\theta}\mathrm{d}\theta.
\end{equation*}
To reach this equation, we use the following orthogonality relations to simplify nonlinearities:
\[
{\bf v}_{a,0} = v_{a,0,\theta} (r) {\bf e}_{\theta} , \quad \int_{\mathbb T} w^{(0)}_r {\rm d}\theta = \int_{\mathbb T} w^{(0)}_{\theta} {\rm d}\theta =\int_{\mathbb T} \partial_{\theta} {\bf w}^{(0)} \cdot {\bf e}_{\theta} {\rm d}\theta =  0.
\]
We rewrite the right-hand side of this differential equation by remarking that:
\begin{equation*}
	(\mathbf{v}_b^{(0)} \cdot \nabla \mathbf{w}^{(0)}) \cdot\mathbf{e}_{\theta} =
	v_{b,r}^{(0)} \partial_r w_{\theta}^{(0)} + \dfrac{v_{b,\theta}^{(0)}}{r}
	\partial_{\theta} w_{\theta}^{(0)}+ \dfrac{v_{b,\theta}^{(0)} w_r^{(0)}}{r},
\end{equation*}
so that
\begin{equation*}
	\begin{aligned} {\mathcal{F}_0} = & \dfrac{1}{2\pi} \int_{\mathbb T} ({\bf w}^{(0)} \cdot
		\nabla {\bf v}_a^{(0)}) \cdot {\bf e}_{\theta} {\rm d}\theta + \dfrac{1}{2\pi}
		\dfrac{\textrm{d}}{\textrm{d}r} \left[ \int_{\mathbb T} v_{b,r}^{(0)} w_{\theta}^{(0)} {\rm
			d}\theta\right] \\ & + \dfrac{1}{2\pi} \int_{\mathbb T} \left( w_{\theta}^{(0)}
		\dfrac{\partial_r [r v_{b,r}^{(0)}]}{r} -w_{\theta}^{(0)} \partial_r v_{b,r}^{(0)} +
		\dfrac{v_{b,\theta}^{(0)} w_r^{(0)}}{r} \right) {\rm d}\theta. \\& 
		 \end{aligned}
\end{equation*}
Eventually, we conclude that:
\[
\mathcal F^{(0)} = \mathcal{F}_0^{(0)}+ \dfrac{\rm d \mathcal{F}_0^{(1)}}{{\rm d}r},
\]
where:
\begin{equation}
	\left\{
	\begin{aligned}
		\mathcal{F}_0^{(0)} & = \dfrac{1}{2\pi} \int_{\mathbb T} ({\bf w}^{(0)} \cdot \nabla {\bf v}_a^{(0)}) \cdot
		{\bf e}_{\theta} {\rm d}\theta + \dfrac{1}{2\pi} \int_{\mathbb T} \dfrac{
			w_{\theta}^{(0)} v_{b,r}^{(0)} + v_{b,\theta}^{(0)} w_r^{(0)}}{r} {\rm d}\theta, \\
		\mathcal{F}_0^{(1)} & = \dfrac{1}{2\pi}  \left[ \int_{\mathbb T} v_{b,r}^{(0)} w_{\theta}^{(0)}
		{\rm d}\theta\right].
	\end{aligned}
	\right.
\end{equation}
In particular, we have by (\ref{eq_Miref})
\begin{equation} \label{eq_decayF0}
	\lim_{r \to \infty} r^{3+2\beta_0}|\mathcal{F}_0^{(0)}| + \lim_{r\to \infty} r^{2+2 \beta_0}|\mathcal{F}_0^{(1)}| = 0.
\end{equation}
{\bf Proposition \ref{prop_0mode}} then yields as a combination of the two following lemmas:
\begin{lemma} \label{lem_w0}
	Let $\alpha \in (3,\min (2\phi_0-1,2\beta_0+3)).$ Given $(\mathcal{F}_0^{(0)},\, \mathcal{F}_0^{(1)}) \in C([1,\infty)) \times C^1([1,\infty))$ satisfying \eqref{eq_decayF0}, assume that $w \in C^2([1,\infty))$ is a solution to 
	\begin{equation*}
		\dfrac{1}{r^{\phi_0+1}}\partial_{r}[r^{\phi_0+1} \partial_r w] + \dfrac{\phi_0-1}{%
			r^2} w = \mathcal{F}^{(0)}_{0} + \partial_r \mathcal{F}_{0}^{(1)}, \qquad w(1)  = 0,
	\end{equation*}
	satisfying:
	\begin{equation} \label{eq_decayw}
		\sup_{r > 1 } r^{\beta_0+1} |w(r)| + \sup_{r > 1} |r^{\beta_0+2} \partial_r w(r)| < \infty.
	\end{equation}
	Then, we have
	\begin{equation}\label{es_wdr}
		\int_{1}^{\infty} r^{\alpha} |\partial_r w|^{2}\mathrm{d}r +
		\int_{1}^{\infty} \dfrac{|w|^2}{r^2} r^{\alpha}\mathrm{d}r \leq
		C_{\alpha}\int_{1}^{\infty} \left( |\mathcal{F}_0^{(0)}|^2 r^{2} + |\mathcal{F}_{0}^{(1)}|^2
		\right)r^{\alpha} \mathrm{d}r
	\end{equation}
	with a constant $C_{\alpha}$ depending only on $\alpha,\, \phi_0.$
\end{lemma}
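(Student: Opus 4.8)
The statement is a weighted $L^2$ estimate for solutions of the second-order ODE
\[
\dfrac{1}{r^{\phi_0+1}}\partial_r\bigl[r^{\phi_0+1}\partial_r w\bigr]+\dfrac{\phi_0-1}{r^2}w=\mathcal F_0^{(0)}+\partial_r\mathcal F_0^{(1)},\qquad w(1)=0,
\]
with the decay condition \eqref{eq_decayw}. The natural approach is an energy method: multiply the equation by $r^{\alpha+\phi_0-1}w$ (the factor $r^{\phi_0-1}$ turns the left side into a divergence form; the extra $r^\alpha$ is the weight we want to appear), and integrate over $(1,\infty)$. The boundary term at $r=1$ vanishes because $w(1)=0$; the boundary term at $r=\infty$ vanishes by \eqref{eq_decayw} provided $\alpha$ is in the stated range (this is where $\alpha<2\beta_0+3$ and $\alpha<2\phi_0-1$ are used — they guarantee integrability and the decay of the flux term $r^{\alpha+\phi_0}w\,\partial_r w$ at infinity). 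Integration by parts of the first term produces $\int_1^\infty r^{\alpha+\phi_0}|\partial_r w|^2\,\mathrm dr$ plus a cross term $\propto\int_1^\infty r^{\alpha+\phi_0-2}w\,\partial_r w\,\mathrm dr$, which after another integration by parts becomes a multiple of $\int_1^\infty r^{\alpha+\phi_0-3}|w|^2\,\mathrm dr$; combined with the zeroth-order term $\int_1^\infty(\phi_0-1)r^{\alpha+\phi_0-3}|w|^2\,\mathrm dr$ from the equation, one gets a quadratic form in $(\partial_r w, w/r)$ whose coefficients depend only on $\alpha$ and $\phi_0$.

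\textbf{The key step} is to check that this quadratic form is positive definite — i.e. that
\[
\int_1^\infty r^{\alpha+\phi_0}|\partial_r w|^2\,\mathrm dr + c(\alpha,\phi_0)\int_1^\infty r^{\alpha+\phi_0-3}|w|^2\,\mathrm dr
\]
dominates $\int_1^\infty r^{\alpha+\phi_0-1}(|\partial_r w|^2+|w|^2/r^2)\,\mathrm dr$ with a positive constant. After collecting terms the coefficient of the $|w|^2$-integral is a quadratic expression in $\alpha$; requiring it to be positive is precisely what pins down the admissible range of $\alpha$, and one should verify that the interval $(3,\min(2\phi_0-1,2\beta_0+3))$ is contained in (or coincides with) the set where positivity holds — in particular $\alpha>3$ should be the binding lower constraint. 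A clean way to organize this is to substitute $w=r^{-\sigma}\psi$ for a suitable shift $\sigma$ (absorbing part of the weight) so that the cross terms are eliminated or have a definite sign, reducing the computation to a single one-dimensional Hardy-type inequality on $(1,\infty)$ with the Dirichlet condition at $r=1$; the Hardy constant then appears explicitly and the positivity condition becomes transparent.

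\textbf{Handling the right-hand side.} Once the left side is bounded below by $c_{\alpha,\phi_0}\bigl(\int r^\alpha|\partial_r w|^2+\int r^{\alpha-2}|w|^2\bigr)$, the right side $\int_1^\infty(\mathcal F_0^{(0)}+\partial_r\mathcal F_0^{(1)})r^{\alpha+\phi_0-1}w\,\mathrm dr$ is estimated by Cauchy--Schwarz: the $\mathcal F_0^{(0)}$ term pairs against $w/r$ with weights $r\mathcal F_0^{(0)}\cdot r^{\alpha/2}$ and $(w/r)r^{\alpha/2}$, giving $\int|\mathcal F_0^{(0)}|^2r^{\alpha+2}$ absorbed into the left side; the $\partial_r\mathcal F_0^{(1)}$ term is integrated by parts (again the boundary contributions vanish by \eqref{eq_decayF0} and \eqref{eq_decayw} within the $\alpha$-range), moving the derivative onto $r^{\alpha+\phi_0-1}w$ and producing terms $\mathcal F_0^{(1)}\partial_r w$ and $r^{-1}\mathcal F_0^{(1)}w$, each controlled by $\int|\mathcal F_0^{(1)}|^2r^\alpha$ plus an absorbable fraction of the left side. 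Collecting everything yields \eqref{es_wdr}. The main obstacle is the positivity/coercivity verification in the second paragraph: everything else is routine integration by parts, but the precise algebra determining that the quadratic form is positive exactly on the claimed interval — and in particular the appearance of the lower bound $\alpha>3$ — requires careful bookkeeping, and this is where the restriction $\phi_0\le 3$ elsewhere in the paper ultimately originates (through the compatibility $3<2\phi_0-1$, i.e. $\phi_0>2$, being the relevant window here, together with the global positivity needs).
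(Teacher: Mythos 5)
Your overall architecture coincides with the paper's: a weighted energy identity, the sharp one-dimensional Hardy inequality $\int_1^M r^{\alpha-2}|w|^2\,\mathrm dr\le \frac{4}{(\alpha-1)^2}\int_1^M r^{\alpha}|\partial_r w|^2\,\mathrm dr$ for $w(1)=0$, Young's inequality on the source (with an integration by parts to unload $\partial_r\mathcal F_0^{(1)}$), and the condition $\alpha<2\beta_0+3$ to kill the boundary terms at $M\to\infty$. But there is a genuine problem with your choice of multiplier, and it sits exactly at the step you defer ("one should verify that the interval \dots is contained in the set where positivity holds"). Multiplying by $r^{\alpha+\phi_0-1}w$ (equivalently, clearing the $r^{-(\phi_0+1)}$ prefactor and testing with $r^{\alpha-2}w$) produces, after integration by parts,
\[
-\int_1^M r^{\alpha+\phi_0-1}|\partial_r w|^2\,\mathrm dr+\left[\tfrac{(\alpha-2)(\alpha+\phi_0-2)}{2}+(\phi_0-1)\right]\int_1^M r^{\alpha+\phi_0-3}|w|^2\,\mathrm dr+\text{(bdry)}.
\]
Writing $\tilde\alpha=\alpha+\phi_0-1$, this is the paper's quadratic form with $\alpha$ replaced by $\tilde\alpha$, and the Hardy-based coercivity condition becomes $\tilde\alpha\in(3,2\phi_0-1)$, i.e.\ $\alpha\in(4-\phi_0,\phi_0)$. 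For $\phi_0\in(2,3]$ this window is disjoint from the lemma's range $\alpha\in(3,2\phi_0-1)$: e.g.\ $\phi_0=2.5$, $\alpha=3.5$ gives the $|w|^2$-coefficient $4.5$ against the Hardy threshold $(\tilde\alpha-1)^2/4=4$, so the form is not positive definite and the argument collapses. (Dominating the $r^{\alpha}$-weighted norms by the heavier $r^{\alpha+\phi_0-1}$-weighted ones does not help if the latter cannot be controlled.)

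The fix is to keep the weight at exactly $r^\alpha$: rewrite the equation in the form $\frac{1}{r^{\alpha}}\partial_r[r^{\alpha}\partial_r w]+\frac{\phi_0+1-\alpha}{r}\partial_r w+\frac{\phi_0-1}{r^2}w=\mathcal F_0^{(0)}+\partial_r\mathcal F_0^{(1)}$ and test with $r^\alpha w$, which is what the paper does. The resulting coefficient of $\int r^{\alpha-2}|w|^2$ is $(\phi_0-1)-\frac{(\phi_0+1-\alpha)(\alpha-1)}{2}$, and the coercivity condition reduces to $(\alpha-1-\phi_0)^2<(\phi_0-2)^2$, i.e.\ precisely $\alpha\in(3,2\phi_0-1)$ — so the lower bound $\alpha>3$ comes from coercivity, as you anticipated, while $\alpha<2\beta_0+3$ is only for the boundary flux (your proposal conflates the two roles of the upper constraints). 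With that corrected multiplier, your treatment of the right-hand side is exactly the paper's and goes through.
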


We provide now a control of the source term in the norms that we introduced in the previous lemma. This is the content of the following lemma.

\begin{lemma} \label{lem_source0}
	There exists a universal constant $C$ such that, given $\phi_0 > 2$ and $\alpha \in (3,2\phi_0-1)$ there holds 
	\begin{eqnarray*}
		&&\int_{1}^{\infty} \left( |\mathcal{F}_0^{(0)}|^2 r^{2} + |\mathcal F_{0}^{(1)}|^2 
		\right)r^{\alpha} \mathrm{d}r\\
			&\leq& C\left[\sup_{r > 1} |r^{\beta_1+2}\nabla{\bf v}_{a,1}|^2  +  \sup_{r > 1} |r^{\beta_1+1}{\bf v}_{b,1}|^2 \right]
			\int_{1}^{\infty} \dfrac{|{\bf w}_1|^2}{r^2} r^{\alpha - 2\beta_1}{\rm d}r  \\
			&&+ C\left[\sup_{r > 1} |r^{2+\beta^{(1)}}\nabla{\bf v}_{a}^{(1)}|^2  +  \sup_{r > 1} |r^{1+\beta^{(1)}}{\bf v}_{b}^{(1)}|^2 \right]\int_{1}^{\infty} \dfrac{|{\bf w}^{(1)}|^2}{r^2} r^{\alpha - 2\beta^{(1)}}{\rm d}r.
	\end{eqnarray*}
\end{lemma}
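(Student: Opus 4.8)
\textbf{Plan of proof of Lemma \ref{lem_source0}.}
The strategy is a direct estimate of the two source terms $\mathcal{F}_0^{(0)}$ and $\mathcal{F}_0^{(1)}$, using the explicit formulas derived just above together with the decay bounds encoded in $M_a^{ref}$ and $M_b^{ref}$. First I would recall that both source terms are angular averages of quadratic expressions in ${\bf w}^{(0)}$ and ${\bf v}_a^{(0)},{\bf v}_b^{(0)}$, and that, since ${\bf w}^{(0)} = {\bf w}_1 + {\bf w}^{(1)}$ while ${\bf v}^{(0)}_a$ and ${\bf v}^{(0)}_b$ similarly decompose, each product splits into a part that is (first mode)$\times$(something) and a part that is ${\bf w}^{(1)}\times$(something). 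The key orthogonality point is that the $0$-mode of a product of two Fourier series only sees matching-frequency pairs: a first mode of ${\bf w}$ can only pair with a first mode of ${\bf v}$ (contributing the $\beta_1$ term), and ${\bf w}^{(1)}$ pairs with ${\bf v}^{(1)}$ (contributing the $\beta^{(1)}$ term); there is no cross term between ${\bf w}_1$ and ${\bf v}^{(1)}$ in the $0$-mode after integration in $\theta$. This is precisely the ``the perturbation arising in the energy estimate for the $0$-mode does not depend on the $0$-mode'' structural remark advertised in the introduction.

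Concretely, for $\mathcal{F}_0^{(0)}$ I would use Cauchy--Schwarz in $\theta$ (Parseval) to bound $|\mathcal{F}_0^{(0)}(r)|$ pointwise in $r$ by a sum of terms of the shape $\|{\bf v}_{a,1}(r)\|\,|\nabla\ \text{acting giving}\ \ldots|$, or more cleanly: factor out the $L^\infty$-in-$r$ weighted norms of ${\bf v}_a^{(0)}$ and ${\bf v}_b^{(0)}$ (that is, $\sup_r r^{\beta_1+2}|\nabla {\bf v}_{a,1}|$ etc. for the first-mode part and $\sup_r r^{2+\beta^{(1)}}|\nabla {\bf v}_a^{(1)}|$ etc. for the higher-mode part), leaving behind $r^{-(\beta_1+2)}$ (resp. $r^{-(2+\beta^{(1)})}$) times $\|{\bf w}_1(r)\|$ (resp. $\|{\bf w}^{(1)}(r)\|$), the $L^2_\theta$ norms on the circle of radius $r$. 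For the divergence-free vector fields at hand, the pointwise size of ${\bf v}^{(0)}_b$ and of $\nabla {\bf v}^{(0)}_a$ on a given mode is comparable (up to the universal constant) to the size of its radial profile and its $r$-derivative; in particular $v_{b,r}^{(0)}$ and $\partial_r[r v_{b,r}^{(0)}]/r$ are controlled by $\sup_r r^{1+\beta_1}|{\bf v}_{b,1}|$ and its derivative, which is again absorbed into the $M_b^{ref}$ weights via the definition of $M_i^{ref}$ after the reinforcement by Proposition \ref{prop_decaymode1}. Then one multiplies by $r^{\alpha+2}$, integrates in $r\in(1,\infty)$, and the weights line up: $r^{\alpha+2}\cdot r^{-2(\beta_1+2)} = r^{\alpha-2\beta_1}/r^2 \cdot r^0$ — more precisely $r^{\alpha+2-2(\beta_1+2)} = r^{\alpha-2-2\beta_1}$, which is exactly the weight $r^{\alpha-2\beta_1}/r^2$ multiplying $|{\bf w}_1|^2$ in the statement — and similarly for the $\beta^{(1)}$ term. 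The term $\mathcal{F}_0^{(1)}$ is handled the same way (indeed it is even lower-order: just $v_{b,r}^{(0)} w_\theta^{(0)}$ averaged), contributing $\int |{\mathcal F}_0^{(1)}|^2 r^\alpha\,{\rm d}r \lesssim \sup_r|r^{1+\beta_1}{\bf v}_{b,1}|^2 \int |{\bf w}_1|^2 r^{\alpha-2-2\beta_1}{\rm d}r + (\beta^{(1)}\text{ analogue})$, which fits under the same bound.

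The only mildly delicate point, and the one I would be most careful about, is the bookkeeping of the weight exponents so that everything collapses to exactly $\int |{\bf w}_1|^2 r^{\alpha-2\beta_1}/r^2\,{\rm d}r$ and $\int |{\bf w}^{(1)}|^2 r^{\alpha-2\beta^{(1)}}/r^2\,{\rm d}r$ with \emph{universal} constant $C$ (no dependence on $\phi_0,\alpha$ other than through the stated ranges). This forces one to be honest that the integrals $\int_1^\infty r^{\alpha-2-2\beta_1}\|{\bf w}_1\|_{L^2_\theta}^2\,{\rm d}r$ that appear after applying Cauchy--Schwarz in $\theta$ are finite and majorize the target — which is automatic since these are literally the right-hand side integrals, with no further Hardy-type inequality needed here (the Hardy issue was pushed into Lemma \ref{lem_w0}). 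A secondary check is that no term involves $|\partial_{rr}|$ or any second derivative of the $\bf v$'s: inspecting $\mathcal{F}_0^{(0)}$ one sees only ${\bf v}^{(0)}$, $\nabla {\bf v}^{(0)}$ and $\partial_r[r v_{b,r}^{(0)}]/r$, i.e. at most one derivative, so the $\nabla^2$ part of $M_i^{ref}$ is never used here — good, it keeps the estimate ``first-order'' as claimed. No genuine obstacle: this is a clean Cauchy--Schwarz-plus-orthogonality computation once the source terms are written in the factored form above.
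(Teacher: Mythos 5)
Your proposal is correct and follows essentially the same route as the paper: split the $\theta$-integrals using the orthogonality of the first modes against the $(1)$-modes (so no cross terms survive in the zero mode), apply Cauchy--Schwarz in $\theta$, factor out the weighted sup-norms of ${\bf v}_{a,1},{\bf v}_b^{(1)}$, etc., and check that the remaining powers of $r$ collapse to $r^{\alpha-2\beta_1}/r^2$ and $r^{\alpha-2\beta^{(1)}}/r^2$. Your exponent bookkeeping ($r^{\alpha+2-2(\beta_1+2)}=r^{\alpha-2\beta_1-2}$ for $\mathcal F_0^{(0)}$ and $r^{\alpha-2(1+\beta_1)}$ for $\mathcal F_0^{(1)}$) matches the paper's computation exactly.
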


We complete this section with proofs for these lemmas. 
 
 \medskip
 
\begin{proof}[Proof of Lemma \ref{lem_w0}]
To obtain our estimate on $w_0,$ we will make repeated use of the following refined Hardy inequalities.
Given $w \in C^1([1,\infty))$ such that $w(1) = 0$ and $\alpha > 1$ there holds:
		\begin{equation}\label{es_wr}
			\int_{1}^{M} \dfrac{|w|^2}{r^2} r^{\alpha}\mathrm{d}r \leq \dfrac{4}{(\alpha -1)^2}
			\int_{1}^{M} |\partial_r w|^2 r^{\alpha}\mathrm{d}r, \qquad \forall \, M > 1.
		\end{equation}
The proof yields by standard integration by parts and Young's inequality, its detail is left to the reader.

\medskip

	Hence, we rewrite our equation 
	\begin{equation*}
		\partial_{rr} w + \dfrac{(\phi_0+1)}{r} \partial_r w + \dfrac{\phi_0-1}{r^2} w =
		\mathcal{F}_0^{(0)} + \partial_r \mathcal{F}_0^{(1)},
	\end{equation*}
	and then
	{\[
		\dfrac{1}{r^{\alpha}}\partial_{r}[r^{\alpha} \partial_r w] + \dfrac{%
			(\phi_0 + 1-\alpha)}{r} \partial_r w+ \dfrac{\phi_0-1}{r^2} w = \mathcal{F}_0^{(0)} +
		\partial_r \mathcal{F}_0^{(1)}.
		\]
		We fix $M$ large,  multiply our equation with $r^{\alpha} w$} and integrate by parts.
	This entails: 
	\begin{eqnarray*}
		&&M^{\alpha} \partial_r w(M) w(M) + \dfrac{(\phi_0+1 - \alpha)}{2} |w(M)|^2 M^{\alpha-1}-\int_{1}^{M} r^{\alpha} |\partial_r w|^{2}\mathrm{d}r \\&
		&+\left(
		(\phi_0-1) -\dfrac{(\phi_0+  1-  \alpha)(\alpha-1)}{2} \right) \int_{1}^{M} |w|^2
		r^{\alpha-2}\mathrm{d}r  \\
		&&= \int_{1}^{M} \left(\mathcal{F}_0^{(0)} + \partial_r \mathcal{F}_0^{(1)}\right) w
		r^{\alpha} \mathrm{d}r.
	\end{eqnarray*}
	Applying \eqref{es_wr},  we infer that: 
	\begin{eqnarray*}
		&&\left[ 1 - \dfrac{4}{(\alpha-1)^2} \left( (\phi_0-1) -\dfrac{(\phi_0 + 1-\alpha)(\alpha-1)}{2} \right) \right] \int_{1}^{M} r^{\alpha} |\partial_r w|^{2}\mathrm{d}%
		r   \\
		&&\leq  M^{\alpha} |\partial_r w(M)| |w(M)|  + |\mathcal{F}_0^{(1)}(M) w(M) M^{\alpha}| +  \dfrac{(\phi_0+1 - \alpha)}{2} |w(M)|^2 M^{\alpha-1}
		\\
		&&+\dfrac{1}{2\eta}\int_{1}^{M} \left(|\mathcal{F}_0^{(0)}|^2r^2 +
		\alpha^2 |\mathcal{F}_0^{(1)}|^2 \right) r^{\alpha} \mathrm{d}r
		+ \dfrac{\eta}{2}
		\int_{1}^{M} \left( 2\dfrac{|w|^2}{r^2} + |\partial_r w|^2 \right)
		r^{\alpha}\mathrm{d}r,
	\end{eqnarray*}
	for arbitrary $\eta >0$ small.  Here, we note that the boundary terms on the second line vanish when $M \to \infty$ under assumptions \eqref{eq_decayF0}-\eqref{eq_decayw} and the condition  $\alpha < 2\beta_0+3$.
	We conclude by combining (\ref{es_wr}) and remarking that, when $\alpha$ is close to $2$ then the factor:
	\[
	\left[ 1 - \dfrac{4}{(\alpha-1)^2} \left( (\phi_0-1) -\dfrac{(\phi_0 + 1-\alpha)(\alpha-1)}{2} \right) \right] > 0,
	\]
	which implies that 
	\[\left( \alpha-1  - \phi_0  \right)^2  - (\phi_0-2)^2<0.
	\]
	This amounts to
	$\alpha \in (3,  2\phi_0-1).$
\end{proof}

\medskip

Now, we give the proof of {\bf Lemma \ref{lem_source0}}.

\begin{proof}[Proof of Lemma \ref{lem_source0}] We provide computations for the term $\mathcal F_0^{(0)}$, the other term is handled in a similar way. For the first term,  we remark that $1$-frequencies and $(1)$-frequencies do not combine in the integrals. Hence, we have: 
\[
\int_{\mathbb T} [{\bf w}^{(0)} \cdot \nabla {\bf v}_a^{(0)} ] \cdot {\bf e}_{\theta}{\rm d}\theta  = 
\int_{\mathbb T} [{\bf w}_1 \cdot \nabla {\bf v}_{a,1} ] \cdot {\bf e}_{\theta}{\rm d}\theta 
+
\int_{\mathbb T} [{\bf w}^{(1)} \cdot \nabla {\bf v}_a^{(1)} ] \cdot {\bf e}_{\theta}{\rm d}\theta,
\]
and then the separate bounds:
\[
\begin{aligned}
\int_{1}^{\infty}\left| \int_{\mathbb T} [{\bf w}_1 \cdot \nabla {\bf v}_{a,1} ] \cdot {\bf e}_{\theta}{\rm d}\theta  
\right|^2 r^{\alpha+2}{\rm d}r
&  \leq
C\sup_{r > 1} |r^{\beta_1+2}\nabla{\bf v}_{a,1}|^2 \int_{1}^{\infty} \dfrac{|{\bf w}_{1}|^2}{r^2} r^{\alpha-2\beta_1}{\rm d}r,
\\
\int_{1}^{\infty}\left| \int_{\mathbb T} [{\bf w}^{(1)} \cdot \nabla {\bf v}_{a}^{(1)} ] \cdot {\bf e}_{\theta}{\rm d}\theta  
\right|^2 r^{\alpha+2}{\rm d}r
& \leq
C\sup_{r>1} |r^{2+\beta^{(1)}}\nabla{\bf v}_{a}^{(1)}|^2 \int_{1}^{\infty} \dfrac{|{\bf w}^{(1)}|^2}{r^2} r^{\alpha-2\beta^{(1)}}{\rm d}r.
\end{aligned}
\]
For the other one, we have similarly:
\[
 \int_{\mathbb T} 
			 (w_{\theta}^{(0)} v_{b,r}^{(0)} + v_{b,\theta}^{(0)} w_r^{(0)} ) {\rm d}\theta
= 
\int_{\mathbb T} 
			 (w_{\theta,1} v_{b,r,1} + v_{b,\theta,1} w_{r,1}) {\rm d}\theta
			 +
\int_{\mathbb T} 
			 (w_{\theta}^{(1)} v_{b,r}^{(1)} + v_{b,\theta}^{(1)} w_r^{(1)} ) {\rm d}\theta,
\]
and 
\begin{align*}
& \int_{1}^{\infty}\left| \int_{\mathbb T} \dfrac{
			w_{\theta}^{(0)} v_{b,r}^{(0)} + v_{b,\theta}^{(0)} w_r^{(0)}}{r} {\rm d}\theta\right|^2 r^{2+\alpha}{\rm d}r
			\\
			&\leq  
\sup_{r > 1} |r^{\beta_1+1}{\bf v}_{b,1}|^2 \int_{1}^{\infty} \dfrac{|{\bf w}_{1}|^2}{r^2} r^{\alpha-2\beta_1}{\rm d}r
+
\sup_{r>1} |r^{1+\beta^{(1)}}{\bf v}_{b}^{(1)}|^2 \int_{1}^{\infty} \dfrac{|{\bf w}^{(1)}|^2}{r^2} r^{\alpha-2\beta^{(1)}}{\rm d}r.
\end{align*}
This completes the proof.
\end{proof}

\subsection{Proof of Proposition \ref{prop_1mode}} \label{sec_mode1}
In this section,  we compute an a priori estimate for
\[
{\bf w}_1 =  \left( w_c^{(n)}(r) \cos\theta + w_s^{(n)} \sin\theta\right) {\bf e}_{r} + \left( w_c^{(t)}(r) \cos\theta + w_s^{(t)} \sin\theta\right) {\bf e}_{\theta},
\] 
where, by construction:
\[
\begin{aligned}
	w_c^{(n)}(r) &= \dfrac{1}{\pi}\int_{\mathbb T} w_r(r,\theta) \cos\theta{\rm d}\theta\,, \quad & w_s^{(n)}(r) & =  \dfrac{1}{\pi}\int_{\mathbb T} w_r(r,\theta) \sin\theta{\rm d}\theta\,, \\
	w_c^{(t)}(r) &= \dfrac{1}{\pi} \int_{\mathbb T} w_{\theta} (r,\theta)\cos\theta{\rm d}\theta\,, \quad & w_s^{(t)}(r) & =  \dfrac{1}{\pi}\int_{\mathbb T} w_{\theta}(r,\theta) \sin\theta{\rm d}\theta\,. \\
\end{aligned}
\]
The proof follows the same structure as for the zero mode. Firstly, we identify an ode satisfied by the first modes in which we separate a linear part and nonlinearities. We focus on obtaining fine a priori estimates on the underlying linear odes that we combine with estimates on nonlinearities in the same spaces. We provide a proof for $w_c^{(n)}$ and $w_s^{(t)}.$ Similar computations (exchanging $sin$ and $cos$ in the following method)  would provide a proof for the other terms.

\subsubsection{Derivation of a fourth order equation}
Plugging the expansion \eqref{eq_expansionu} into \eqref{w equ} entails that
\begin{equation} \label{eq_stokesrevisited}
	\left\{
	\begin{aligned} \Delta {\bf w} + \phi_0 \left( \dfrac{w_{\theta}}{r^2} {\bf
			e}_{\theta} - \dfrac{w_r }{r^2}{\bf e}_r + \dfrac{1}{r} \partial_{r} {\bf
			w}\right) - \nabla q & = {\bf F},\\ \dfrac{1}{r} \partial_r [r w_r]
		+\dfrac{1}{r} \partial_{\theta} w_{\theta} & = 0, \end{aligned}
	\right.
\end{equation}
with 
\begin{equation*}
	\mathbf{F} = \mathbf{w} \cdot \nabla \mathbf{v}_a + \mathbf{v}_{b} \cdot
	\nabla \mathbf{w} + \dfrac{\mu}{r^2} \partial_{\theta} \mathbf{w} - \dfrac{%
		\mu}{r^2} \left( w_{\theta} \mathbf{e}_r + w_r \mathbf{e}_{\theta}\right).
\end{equation*}
Identifying cosine and sine modes in the divergence-free constraint ensures then that:
\begin{equation}  \label{eq_continuity_mode1}
	\left\{
	\begin{aligned}
		\dfrac{\text{d}}{\text{d}r}[ r w_{c}^{(n)}] + {w_{s}^{(t)} } = 0, \\
		\dfrac{\text{d}}{\text{d}r}[ r w_{s}^{(n)}] - {w_{c}^{(t)} } = 0.
	\end{aligned}
	\right.
\end{equation}
We note that this latter equation implies that we have the further boundary condition in $r=1:$
\begin{equation} \label{eq_bc1}
	\left\{
	\begin{aligned}
		& \partial_r w_c^{(n)}(1) = -w_c^{(n)}(1) - w_s^{(t)}(1) = 0,  \\
		& \partial_r w_s^{(n)}(1) = w_c^{(t)}(1) - w_s^{(n)}(1) = 0.   \\
	\end{aligned}
	\right.
\end{equation}

We dot multiply then the first equation of \eqref{eq_stokesrevisited} with $\cos\theta\ \mathbf{e}_r$ and integrate on
circles. We obtain: 
\begin{equation*}
	LHS_{\Delta} + \phi_0 LHS^{\prime}- LHS_q = RHS_c^{(n)},
\end{equation*}
with
\begin{equation*}
	\begin{aligned} LHS_{\Delta} & = \int_{\mathbb T} \left( \dfrac{1}{r}
		\partial_r[r \partial_r {\bf w}] \cdot \cos\theta {\bf e}_r
		\right){\rm d}\theta + \dfrac{1}{r^2} \int_{\mathbb T} \partial_{\theta \theta
		} {\bf w} \cdot \cos\theta {\bf e}_r \rm{d}\theta ,\\
LHS' &= \int_{\mathbb T} \left( \dfrac{1}{r} \partial_r {\bf
			w} \cdot \cos\theta {\bf e}_r - \dfrac{w_r}{r^2} \cos\theta \right){\rm
			d}\theta ,\\
	LHS_{q} & = \int_{\mathbb T} \partial_r q \cos\theta\mathrm{d}\theta,\\
	RHS^{(n)}_c & = \int_{\mathbb T} \left( \dfrac{\partial_r (r
			w_{r}{v}_{a,r} + rv_{b,r} w_{r})}{r} + \dfrac{\mu {\bf e}_r \cdot  \partial_{\theta} {\bf
				w}}{r^2} - \dfrac{\mu w_{\theta}}{r^2} \right) \cos\theta
		\rm{d}\theta \\
		& \quad  +\int_{\mathbb T}\dfrac{\partial_{\theta} (w_{\theta}{\bf
				v}_{a}+ v_{b,\theta}{\bf w} )\cdot {\bf e}_r }{r} \cos \theta {\rm d}\theta.	
	\end{aligned} 
\end{equation*}
Performing straightforward integration by parts, this entails:
\[
\dfrac{1}{r} \dfrac{\text{d}}{\text{d}r} \left[ r \dfrac{\text{d}}{\text{d}r}
w^{(n)}_{c} \right] + \phi_0 \dfrac{\text{d}}{\text{d}r} \left[ \dfrac{w_c^{(n)}}{r}%
\right] - \dfrac{2}{r^2} \left( w_c^{(n)} + w_s^{(t)} \right) = RHS^{(n)}_c
+ \int_{\mathbb T}  \partial_r q \cos\theta\mathrm{{d}\theta,}
\]
where
\begin{equation*}
	\begin{aligned}  & 
RHS^{(n)}_c  =& \dfrac{1}{r}\dfrac{\textrm{d}}{\textrm{d}r}\left[ r F^{(n)}_c\right] + G^{(n)}_c,
	\end{aligned}
\end{equation*}
with 
\begin{equation*}
	\begin{aligned} F^{(n)}_c& = \int_{\mathbb T} \left( w_{r}{v}_{a,r} + v_{b,r} w_r
		\right) \cos\theta {\rm d}\theta, \\G^{(n)}_c &= \int_{\mathbb T} \left(
		\dfrac{\mu}{r^2} w_{r} \sin\theta + \dfrac{(w_{\theta} v_{a,r} +
			v_{b,\theta}w_{r})}{r}\sin\theta \right) {\rm d}\theta \\ &-
		\int_{\mathbb T} \left( \dfrac{(w_{\theta} v_{a,\theta} + v_{b,\theta}
			w_{\theta} )}{r}\cos\theta + \dfrac{2\mu}{r^2} w_{\theta}\cos\theta
		\right) {\rm d}\theta. \end{aligned}
\end{equation*}
To cancel the pressure term, we compute an equivalent equation by
dot multiplying with $\sin\theta  \ \mathbf{e}_{\theta}.$
In a similar fashion, we have
\[
\dfrac{1}{r} \dfrac{\text{d}}{\text{d}r} \left[ r \dfrac{\text{d}}{\text{d}r}
w^{(t)}_{s} \right] + \dfrac{\phi_0}{r^2}\dfrac{\text{d}}{\text{d}r}
[rw_s^{(t)}] - \dfrac{2}{r^2} \left[ w^{(t)}_{s} + w^{(n)}_c\right] =
RHS^{(t)}_s -\int_{\mathbb T} \dfrac{q}{r} \cos\theta\mathrm{{d}\theta},
\]
where
\[
RHS^{(t)}_s =
		\dfrac{1}{r} \dfrac{\textrm{d}}{\textrm{d}r} \left[ r F_s^{(t)} \right] + G_s^{(t)},
\]
with 
\begin{equation*}
	\begin{aligned} 
		F_s^{(t)} &= \int_{\mathbb T} (w_{r}{v}_{a,\theta} + v_{b,r}
		w_{\theta}) \sin \theta {\rm d}\theta, \\ G_s^{(t)} &= \int_{\mathbb T}\left(
		\dfrac{(w_{\theta} v_{a,r} + v_{b,\theta }w_r)}{r} \sin\theta -
		\dfrac{(w_{\theta} v_{a,\theta} + v_{b,\theta} w_{\theta} )}{r} \cos\theta
		- \dfrac{\mu}{r^2} w_{\theta} \cos\theta\right) {\rm d}\theta. \end{aligned}
\end{equation*}
We point out that, with our assumptions on the decay of ${\bf v}_a$ and ${\bf v}_b,$ we have the expected decay of source terms:
\begin{equation} \label{eq_decaysource1}
\sup_{r > 1} r^{2+\beta_0} \left[\left( |F_l^{(k)}| + r |G_l^{(k)}|\right) + r \left( |\partial_r F_l^{(k)}| + r |\partial_r G_l^{(k)}|\right) \right] < \infty, 
\end{equation}
for $(l,k) \in  \{(s,t),(c,n) \}$.
So, we infer the following system:
\begin{equation*}
	\left\{ \begin{aligned} \dfrac{1}{r} \dfrac{\textrm{d}}{\textrm{d}r} \left[
		r \dfrac{\textrm{d}}{\textrm{d}r} w^{(n)}_{c} \right] + \phi_0
		\dfrac{\textrm{d}}{{\textrm d}r} \left[ \dfrac{w^{(n)}_c}{r}\right] -
		\dfrac{2}{r^2} \left( w^{(n)}_c + w^{(t)}_s \right)& = RHS^{(n)}_c + \dfrac{\textrm{d}}{\textrm{d}r} \int_{\mathbb T}
		q \cos\theta {\rm d}\theta, \\ \dfrac{1}{r} \dfrac{\textrm{d}}{\textrm{d}r} \left[ r
		\dfrac{\textrm{d}}{\textrm{d}r} w^{(t)}_{s} \right] +\dfrac{\phi_0}{r^2}
		\dfrac{\textrm{d}}{\textrm{d}r} [rw^{(t)}_s] - \dfrac{2}{r^2} \left( w^{(t)}_{s} +
		w^{(n)}_c\right) & = RHS^{(t)}_s -\int_{\mathbb T} \dfrac{q}{r} \cos\theta {\rm d}\theta, \\
		\dfrac{\textrm{d}}{\textrm{d}r}[ r w^{(n)}_{c}] + {w^{(t)}_{s} } & = 0. \end{aligned} %
	\right.
\end{equation*}
To treat this system, we skip exponents in the following lines for legibility. So, we apply $\partial_r (r \cdot)$ to the second equation and combine with
the first one. This entails 
\begin{equation} \label{prime eqns}
	-r^2 w_c^{(4)} - (6+\phi_0) rw_c^{(3)} - (3 + 4\phi_0) w_c^{^{\prime\prime}} + 3 
	\dfrac{w_c^{\prime}}{r} = RHS^{(n)}_c + \dfrac{\text{d}}{\text{d}r} [r RHS^{(t)}_s ],
\end{equation}
with the boundary conditions
\begin{equation*}
	\partial_r w_c (1) = w_c(1) = 0, \qquad \lim_{r \to \infty} w_c(r) = \lim_{r
		\to \infty} \partial_r w_c(r) =0.
\end{equation*}

To complete the proof of {\bf Proposition \ref{prop_1mode}}, we compute now a priori estimates for the model equation \eqref{prime eqns} and derive corresponding control on the nonlinearities. {\bf Proposition \ref{prop_1mode}} yields as a straightforward combination of the following two lemmas:  
\begin{lemma} \label{lem_w1}
	Let $\beta_0 > 0$ and $\phi_0 > 2.$  Given $({F^{(n)}}, F^{(t)}) \in C^2([1,\infty))  $ and $(G^{(n)},G^{(t)}) \in C^1([1,\infty))$ satisfying \eqref{eq_decaysource1}, assume that $w \in C^4([1,\infty))$ is a solution to 
\begin{equation} \label{eq_model1}
	-r^2 \partial_{r}^4w - (6+\phi_0) r \partial_{r}^3 w - (3 + 4\phi_0) \partial_{r}^2 w + 3 
	\dfrac{\partial_{r} w}{r} = RHS^{(n)} + \partial_r [r RHS^{(t)} ],
\end{equation}
where
\[
	RHS^{(n)} = \dfrac{1}{r} \partial_r  \left[ r F^{(n)} \right] + G^{(n)}, \qquad 
	RHS^{(t)} = \dfrac{1}{r}\partial_r  \left[ r F^{(t)} \right] + G^{(t)}
\]
satisfy \eqref{eq_decaysource1}.
If this solution satisfies further the boundary condition:
\[
	\partial_r w (1) = w(1) = 0
\]
and decay properties:
\[
\begin{aligned}
\sup_{r > 1} r^{1+\beta_0} |w(r)| +  r^{2+\beta_0} |\partial_r w(r)| +  r^{3+\beta_0} |\partial_{rr} w(r)| +  r^{4+\beta_0} |\partial_{rrr} w(r)| < \infty,
\end{aligned}
\]
then, there exists a constant $C(\phi_0)$ depending only on $\phi_0$ for which
	\begin{eqnarray*}
		&&\sum_{i=0}^2 \int_{1}^{\infty} |\partial_{r}^{i}w(r)|^2r^{2i}{\rm d}r  \\
		&\leq & C(\phi_0) 
		\int_1^{\infty} \left[ |G^{(n)}(r)|^2 r^2 + |F^{(n)}(r)|^2 +|G^{(t)}(r)|^2 r^2 + |F^{(t)}(r)|^2  \right]r^{2}{\rm d}r.
	\end{eqnarray*}
\end{lemma}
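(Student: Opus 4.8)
\textit{Proof proposal.}

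The plan is to prove this a priori estimate by a single weighted energy identity obtained by multiplying \eqref{eq_model1} by $r^2 w$ and integrating over $(1,\infty)$. The weight $r^2$ is dictated by the target norm: moving two derivatives off the top-order term $-r^2\partial_r^4 w$ and onto $r^2\cdot r^2 w=r^4 w$ is exactly what produces the leading quantity $\int_1^\infty r^4|\partial_r^2 w|^2\,\mathrm{d}r$. Before any manipulation I would record that the three integrals $\int_1^\infty|w|^2\,\mathrm{d}r$, $\int_1^\infty r^2|\partial_r w|^2\,\mathrm{d}r$ and $\int_1^\infty r^4|\partial_r^2 w|^2\,\mathrm{d}r$ are finite: by the assumed pointwise decay of $w$ and its derivatives, each integrand is $O(r^{-2-2\beta_0})$ at infinity. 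This legitimizes the computation, which is first carried out on $(1,M)$ and then passed to the limit $M\to\infty$.

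Carrying out the integrations by parts, every boundary term at $r=1$ contains a factor $w(1)$ or $\partial_r w(1)$ and hence vanishes, while the boundary terms at $r=M$ (of type $M^4 w\,\partial_r^3 w$, $M^4\partial_r w\,\partial_r^2 w$, $M^3|\partial_r w|^2$, $M^3 w\,\partial_r^2 w$, $M^2 w\,\partial_r w$, $M|w|^2$, and the analogous terms on the source side) are $O(M^{-1-2\beta_0})$ by the decay of $w$ and of $F^{(n)},F^{(t)},G^{(n)},G^{(t)}$ from \eqref{eq_decaysource1}, hence tend to $0$. A lengthy but routine computation then yields the identity
\[
\int_1^\infty r^4|\partial_r^2 w|^2\,\mathrm{d}r + \frac{\phi_0}{2}\int_1^\infty r^2|\partial_r w|^2\,\mathrm{d}r + \Bigl(\phi_0-\tfrac32\Bigr)\int_1^\infty |w|^2\,\mathrm{d}r = -\int_1^\infty\bigl(RHS^{(n)}+\partial_r[rRHS^{(t)}]\bigr)\,r^2 w\,\mathrm{d}r .
\]
Since $\phi_0>2$ (in fact $\phi_0>\tfrac32$ suffices here), all three coefficients on the left are positive, so the left-hand side bounds $\|w\|_\star^2:=\int_1^\infty(|w|^2+r^2|\partial_r w|^2+r^4|\partial_r^2 w|^2)\,\mathrm{d}r$ from below with a constant depending only on $\phi_0$.

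For the right-hand side, the point is to remove every derivative from the data, since only the $L^2$-norms of $F^{(n)},F^{(t)},G^{(n)},G^{(t)}$ may appear in the conclusion. Here I would expand $r^2RHS^{(n)}=r\,\partial_r[rF^{(n)}]+r^2 G^{(n)}$ and, using $r\,RHS^{(t)}=\partial_r[rF^{(t)}]+rG^{(t)}$, integrate by parts the $\partial_r$ in $\partial_r[rRHS^{(t)}]\cdot r^2 w$ and then once more so that the derivative of $rF^{(t)}$ lands on $\partial_r[r^2 w]$; again the new boundary terms vanish thanks to $w(1)=\partial_r w(1)=0$ and the decay at infinity. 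What remains is a sum of integrals of the form $\int_1^\infty r F\,\ell(r)\,\mathrm{d}r$ and $\int_1^\infty r^2 G\,\ell(r)\,\mathrm{d}r$, where in each case $\ell$ is a fixed linear combination of $w$, $r\partial_r w$, $r^2\partial_r^2 w$. By Cauchy--Schwarz and $r\ge1$, $\bigl|\int_1^\infty rF\,\ell\,\mathrm{d}r\bigr|\le\bigl(\int_1^\infty r^2|F|^2\,\mathrm{d}r\bigr)^{1/2}\bigl(\int_1^\infty|\ell|^2\,\mathrm{d}r\bigr)^{1/2}\le C\bigl(\int_1^\infty r^2|F|^2\,\mathrm{d}r\bigr)^{1/2}\|w\|_\star$, and likewise $\bigl|\int_1^\infty r^2 G\,\ell\,\mathrm{d}r\bigr|\le C\bigl(\int_1^\infty r^4|G|^2\,\mathrm{d}r\bigr)^{1/2}\|w\|_\star$. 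These are precisely the four terms on the right-hand side of the claimed inequality; combining with the coercivity of the left-hand side and dividing by $\|w\|_\star$ gives the conclusion.

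The main difficulty, and the reason for the bookkeeping above, is the mismatch between the right-hand side of \eqref{eq_model1}, which carries derivatives of the data ($\partial_r[rRHS^{(t)}]$, with $RHS^{(t)}$ itself of the form $\tfrac1r\partial_r[rF^{(t)}]+G^{(t)}$), and the conclusion, which only tolerates $L^2$-norms of $F^{(n)},F^{(t)},G^{(n)},G^{(t)}$: one must make sure each integration by parts sends a derivative onto the regular, fast-decaying unknown and never onto the source, and that all boundary terms thereby created are killed either by $w(1)=\partial_r w(1)=0$ or by decay at infinity. The positivity of the quadratic form on the left, by contrast, is cheap — it only uses $\phi_0>\tfrac32$, comfortably weaker than the standing assumption $\phi_0>2$.
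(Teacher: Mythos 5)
Your proposal is correct and follows essentially the same route as the paper's proof: multiply \eqref{eq_model1} by $r^2 w$, integrate by parts to obtain the coercive quadratic form $\int r^4|\partial_r^2 w|^2 + \tfrac{\phi_0}{2}\int r^2|\partial_r w|^2 + (\phi_0-\tfrac32)\int |w|^2$, then integrate by parts on the source side so that only $F^{(n)},F^{(t)},G^{(n)},G^{(t)}$ (and not their derivatives) appear, and close with Cauchy--Schwarz. The only cosmetic difference is that the paper absorbs the right-hand side via a Young inequality with a small parameter $\eta<1/4$ rather than dividing by the energy norm, which changes nothing.
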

\begin{lemma} \label{lem_source1}
There exists a universal constant $C$   such that there holds:	\begin{eqnarray*}
	&&	\int_{1}^{\infty} \left( |G_{c}^{(n)}(r)|^2 + |G_{s}^{(t)}(r)|^2\right) r^{4}{\rm d}r + \int_{1}^{\infty} \left( |F_{c}^{(n)}(r)|^2+ |F_{s}^{(t)}(r)|^2\right)  r^{2}{\rm d}r \\
			& \leq& C\left( (\mu^2 + \sup_{r>1} |r^{1+\beta_0}  {\bf v}_a|^2 + \sup_{r>1} |r^{1+\beta_0}  {\bf v}_b|^2 )   \int_{1}^{\infty}
			\int_{\mathbb T} |{\bf w}_1(r,\theta)|^2  {\rm d}r{\rm d}\theta \right. \\ &&
			\left. + \sup_{r>1} (|r^{1+\beta_1} {\bf v}_{a}^{(0)}|^2 + |r^{1+\beta_1}{\bf v}_{b}^{(0)}|^2)
			\int_{1}^{\infty} \int_{\mathbb T} \dfrac{|{\bf w}_0(r,\theta)|^2 + |{\bf w}^{(1)}(r,\theta)|^2}{r^2} r^{2(1-\beta_1)}{\rm
				d}r {\rm d}\theta \right).
	\end{eqnarray*}
\end{lemma}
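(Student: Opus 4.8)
The plan to prove {\bf Lemma \ref{lem_source1}} is to expand each source term into angular Fourier modes and then to track, product by product, which pairings of the pieces ${\bf w}_0,{\bf w}_1,{\bf w}^{(1)}$ of ${\bf w}$ with the pieces ${\bf v}_{i,0},{\bf v}_{i,1},{\bf v}_i^{(1)}$ ($i=a,b$) of the perturbations can feed a nonzero first angular mode. Indeed, by their very definition $F_c^{(n)},F_s^{(t)}$ are first angular Fourier coefficients of finite sums of products of scalar components of ${\bf w}$ with scalar components of ${\bf v}_a$ or ${\bf v}_b$, while $G_c^{(n)},G_s^{(t)}$ are, up to a factor $1/r$, first Fourier coefficients of similar products together with the linear terms $\mu r^{-2}w_r$ and $\mu r^{-2}w_{\theta}$. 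I would first substitute ${\bf w}={\bf w}_0+{\bf w}_1+{\bf w}^{(1)}$ and ${\bf v}_i={\bf v}_{i,0}+{\bf v}_{i,1}+{\bf v}_i^{(1)}$ into these formulas, using the orthogonality facts already exploited above, in particular that the radial components of ${\bf w}_0$ and of ${\bf v}_{i,0}$ vanish.

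The key point is the angular selection rule: a product of a mode-$k$ and a mode-$\ell$ function contributes only to the modes $k\pm\ell$, so the first Fourier coefficient of a product is fed only by the pairings $0\times1$, $1\times0$, $1\times(\ge2)$, $(\ge2)\times1$ and $(\ge2)\times(\ge2)$; in particular the pairings ${\bf w}_1\times{\bf v}_{i,1}$, ${\bf w}_0\times{\bf v}_{i,0}$, ${\bf w}_0\times{\bf v}_i^{(1)}$ and ${\bf w}^{(1)}\times{\bf v}_{i,0}$ carry no first mode and drop out. Among the survivors, the $\mu$-terms and the products pairing ${\bf w}_1$ with ${\bf v}_{i,0}$ or with ${\bf v}_i^{(1)}$ are to be bounded by a multiple of $\int_1^\infty\int_{\mathbb T}|{\bf w}_1|^2\,{\rm d}r\,{\rm d}\theta$, while the products pairing ${\bf w}_0$ or ${\bf w}^{(1)}$ with ${\bf v}_{i,1}$ or with ${\bf v}_i^{(1)}$ are to be bounded by a multiple of $\int_1^\infty\int_{\mathbb T}r^{-2}(|{\bf w}_0|^2+|{\bf w}^{(1)}|^2)\,r^{2(1-\beta_1)}\,{\rm d}r\,{\rm d}\theta$.

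For each surviving product I would estimate its first Fourier coefficient by the $\theta$-integral of the product, apply H\"older in $\theta$ placing the ${\bf v}_i$-factor in $L^\infty_\theta$ and the ${\bf w}$-factor in $L^2_\theta$, and bound the $L^\infty_\theta$-norm of the relevant piece of ${\bf v}_i$ taken as the full object ${\bf v}_i$, ${\bf v}_i^{(0)}$ or ${\bf v}_i^{(1)}$ whose spectrum contains the mode at play (a single Fourier mode is not dominated pointwise by the whole field, so it is the full object that one must extract, which is exactly what the sup-norms of the statement provide). The remaining $r$-integral is then the announced weighted $L^2$-norm of the matching piece of ${\bf w}$, once the powers of $r$ are accounted for: for the ${\bf w}_1$-pairings one uses the crude but universal bound $r^{1+\beta_0}(|{\bf v}_{i,0}|+|{\bf v}_i^{(1)}|)\le r^{1+\beta_0}|{\bf v}_i|\le M_i^{ref}$, so that after inserting the integration weight of $F$ (which is $r^2\,{\rm d}r$) or of $G$ ($r^4\,{\rm d}r$) the residual power is $r^{-2\beta_0}\le1$ on $(1,\infty)$, and the $\mu$-terms, being linear in ${\bf w}_1$ with coefficient $\mu r^{-2}$, likewise contribute $\mu^2$ times $\int_1^\infty\int_{\mathbb T}|{\bf w}_1|^2$; for the ${\bf w}_0$- and ${\bf w}^{(1)}$-pairings one uses instead the improved decay of {\bf Proposition \ref{prop_decaymode1}}, namely $r^{1+\beta_1}|{\bf v}_{i,1}|\le M_i^{ref}$ and, since $\beta_1<1<\beta^{(1)}$, $r^{1+\beta_1}|{\bf v}_i^{(1)}|\le r^{1+\beta^{(1)}}|{\bf v}_i^{(1)}|\le M_i^{ref}$, which produces exactly the weight $r^{2(1-\beta_1)}$ (in the subcase ${\bf w}^{(1)}\times{\bf v}_i^{(1)}$ one may alternatively keep the sharper weight $1+\beta^{(1)}$ and conclude from $2-2\beta^{(1)}<2-2\beta_1$ that $r^{2(1-\beta^{(1)})}\le r^{2(1-\beta_1)}$ on $(1,\infty)$).

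Summing the finitely many contributions and relabelling the absolute constants yields the claim. No delicate analytic ingredient enters: the only real work, and the place where a slip is most likely, is the exhaustive and correct matching of every frequency pairing to the right one of the two groups of weights, together with the bookkeeping that each piece of ${\bf v}_i$ is admissible with the weight $1+\beta_0$ while its non-zero-frequency pieces are additionally admissible with the weaker weight $1+\beta_1$.
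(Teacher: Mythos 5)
Your proposal is correct and follows essentially the same route as the paper: the paper also expands ${\bf w}$ and ${\bf v}$ into the $0$, $1$ and $(1)$ pieces, uses the angular orthogonality to reduce each first Fourier coefficient to exactly the surviving pairings you list (written there as the three-term identity ${\bf v}_1\times{\bf w}_0+{\bf v}\times{\bf w}_1+{\bf v}^{(0)}\times{\bf w}^{(1)}$), and then applies H\"older in $\theta$ with the ${\bf v}$-factor in $L^\infty_\theta$ weighted by $r^{1+\beta_0}$ for the ${\bf w}_1$-pairings and by $r^{1+\beta_1}$ for the ${\bf w}_0$/${\bf w}^{(1)}$-pairings, with the same power-of-$r$ bookkeeping. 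The only cosmetic difference is that the paper keeps the full ${\bf v}$ (rather than its individual modes) in the ${\bf w}_1$-pairing, which sidesteps your parenthetical worry about dominating single Fourier modes.
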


Proofs for these lemmas are provided in the next  subsections.

\subsubsection{Proof of Lemma \ref{lem_w1}}
We multiply (\ref{eq_model1}) with $r^{2} w$ and integrate between $r=1$ and $r=M$ arbitrary large. This entails 
\begin{align} \notag 
	- \int_{1}^{M} \partial_r^{4} w \, w r^{4} {\rm d}r & - (6+\phi_0) \int_{1}^{M} \partial_r^{3} w \, w r^{3} {\rm d}r - (3+4\phi_0) \int_{1}^{M} \partial_r^{2} w \, w r^{2} {\rm d}r + 3 \int_{1}^{M} \partial_r w \, w r  {\rm d}r \\
	& = \int_{1}^{M} RHS^{(n)} w r^{2} {\rm d}r  + \int_{1}^{M} \partial_r [r RHS^{(t)}] w r^{2} {\rm d}r. \label{eq_Is}
\end{align}

Because of the decay of $w$ at infinity we can integrate by parts the left-hand side $LHS$ of this identity to yield:
\[
LHS= \varepsilon(M)  - \int_{1}^{\infty} |\partial_r^{2} w(r)|^2 r^{4}\mathrm{d}r -  \dfrac{\phi_0}{2}\int_{1}^{\infty} |\partial_r w(r)|^2 r^{2}\mathrm{d}r - \left( \phi_0 - \dfrac{3}{2}\right) \int_{1}^{\infty} |w(r)|^2 \mathrm{d}r,
\]
where $\varepsilon(M) \to 0$ when $M \to \infty.$ The main point here is that all coefficients of integrals are strictly less than $-1/2$ since $\phi_0 > 2.$ 

\medskip

For the right-hand side $RHS$ of (\ref{eq_Is}), we recall the decay property \eqref{eq_decaysource1} that entails in particular that $r^{3+\beta_0}RHS^{(t)} \to 0$ at infinity. We obtain then, by integrating by parts a function $\varepsilon(M)$ vanishing while $M \to \infty$ such that:
\begin{equation*}
	\begin{aligned} RHS	& =\varepsilon(M)
		 - \int_{1}^{M} F^{(n)}(r)
		[\partial_r w(r) r^{2} +  w(r) r] {\rm d}r \\ 
		&\quad  +  \int_{1}^{M} G^{(n)}(r) w(r)r^{2} {\rm d}r -
		\int_{1}^{M} G^{(t)}(r) \left[ \partial_r w(r) r^{3} + 2 w(r)
		r^{2} \right] {\rm d}r \\ &
		\quad  + \int_{1}^{M} F^{(t)}(r) \left[ \partial_{rr}w(r)
		r^{3} + 4 \partial_r w(r) r^{2} + 2  w(r)
		r \right] {\rm d}r. \end{aligned}
\end{equation*}
We now estimate the various terms in $RHS$ via standard Minkowski inequality yielding a universal constant $C$ such that  for arbitrary $\eta \in (0,1):$
\begin{multline*}
	|RHS|
	\leq  \varepsilon(M) + \dfrac{\eta}{2} 
	\left( 
	\int_{1}^{\infty} |\partial_r^{2} w(r)|^2 r^{4}\mathrm{d}r +  \int_{1}^{\infty} |\partial_r w(r)|^2 r^{2}\mathrm{d}r + \int_{1}^{\infty} |w(r)|^2 \mathrm{d}r.
	\right)
	\\ +
	\dfrac{C}{\eta}\int_1^{\infty} \left[ |G^{(n)}(r)|^2 r^2 + |F^{(n)}(r)|^2 +|G^{(t)}(r)|^2 r^2 + |F^{(t)}(r)|^2  \right]r^{2}{\rm d}r.
\end{multline*}
Choosing $\eta < 1/4$ and letting  $M$ go to infinity concludes the proof.

\subsubsection{Proof of Lemma \ref{lem_source1}}
We focus on the $G_{c}^{(n)}$ term. All the other quantities are handled in the same way. 
We first note that:
\[
G_{c}^{(n)} =\pi \mu \dfrac{w_{s}^{(n)} +2 w_{c}^{(t)}}{r^2} + \dfrac{\tilde{G}_{c}^{(n)}}{r},
\]
where $\tilde{G}_{c}^{(n)}$ is a combination of integrals of the form: 
\[
\int_{\mathbb T} ({\bf v} \cdot {\bf e}) \, ({\bf w} \cdot {\bf \tilde{e}})\, \sigma(\theta) {\rm d}\theta
\]
with $({\bf e},\tilde{\bf e}) \in \{{\bf e}_r,{\bf e}_{\theta}\}$ and ${\bf v} \in \{{\bf v}_a,{\bf v}_b\}$ and
$\sigma(\theta) \in \{\cos\theta,\sin\theta \}.$ We expand ${\bf w} = {\bf w}_0 + {\bf w}_1 + {\bf w}^{(1)}$ and similarly with ${\bf v}$ and we recall that the radial and tangential components of ${(1)}$-modes are orthogonal to constant and first modes. We obtain:
\[
\begin{aligned}
\int_{\mathbb T} ({\bf{v} \cdot {\bf e}) ({\bf w} \cdot {\bf \tilde{e}}}) \sigma(\theta) {\rm d}\theta
 = & \int_{\mathbb T} {(\bf v}_{1} \cdot {\bf e}) ({\bf w}_0 \cdot {\bf \tilde{e}}) \sigma(\theta) {\rm d} \theta 
+ \int_{\mathbb T} {(\bf v} \cdot {\bf e}) ({\bf w}_1 \cdot {\bf \tilde{e}}) \sigma(\theta) {\rm d}\theta \\
&+  \int_{\mathbb T} {(\bf v}^{(0)} \cdot {\bf e}) ({\bf w}^{(1)} \cdot {\bf \tilde{e}}) \sigma(\theta) {\rm d}\theta,
\end{aligned}
\]
and thus
\[
\left|\int_{\mathbb T} ({\bf{v} \cdot {\bf e}) ({\bf w} \cdot {\bf \tilde{e}}}) \sigma(\theta) {\rm d}\theta \right|
\leq  C\left[  (\sup_{r>1} |r^{1+\beta_1}{\bf v}^{(0)}|) \dfrac{|{\bf w}_{0}| + |{\bf w}^{(1)}|}{r^{1+\beta_1}}  + (\sup_{r>1} |r^{1+\beta_0}  {\bf v}| )  \dfrac{|{\bf w}_1|}{r^{1+\beta_0}}    \right].
\]
Consequently, we obtain:
\[
\begin{aligned}
\int_{1}^{\infty} |G_{c}^{(n)}|^2 r^{4} {\rm d}r 
\leq & C( |\mu|^2 + \sup_{r>1} |r^{1+\beta_0}  {\bf v}|^2 )  \int_{1}^{\infty}   \dfrac{|{\bf w}_1|^2}{r^2} r^{2}{\rm d}r 
\\
& 
+ C \sup_{r > 1}  |r^{1+\beta_1}{\bf v}^{(0)}|^2
\int_{1}^{\infty}   \dfrac{|{\bf w}_0|^2 + |{\bf w}^{(1)}|^2}{r^2} r^{2(1- \beta_1)}{\rm d}r .
\end{aligned}
\]

\subsection{Proof of Proposition \ref{prop_largemodes}} \label{sec_mode>=2}

In this section, we control higher order frequencies via a standard multiplier argument. To obtain energy estimate, we want to multiply (\ref{w equ}) by $\mathbf{w}^{(0)}$ and integrate by
parts on $B(0,M) := B_M$ with $M$ arbitrary large. 
To avoid external boundary terms, we prefer a truncation procedure. Namely, we know that ${\bf w}^{(0)} = \nabla^{\bot} \varphi^{(0)}$ where 
\[
\sup_{|x| > 1}\left( |x|^{\beta_0} |\varphi^{(0)}(x)| + |x|^{1+\beta_0} |\nabla \varphi^{(0)}| + |x|^{2+\beta_0} |\nabla^2 \varphi^{(0)}| \right) < \infty.
\]
We consider then $\zeta_{M} \in C^{\infty}(\mathbb R^2)$ a truncation function satisfying
\[
\mathds{1}_{B_M} \leq \zeta_M \leq \mathds{1}_{B_{2M}} \text{  with } |\nabla^{k} \zeta_M| \leq \dfrac{C(k)}{M^{k}}, \quad \forall \, k \in \mathbb N.
\]
We denote ${\bf w}^{(0)}_{M} := \nabla^{\bot} \zeta_M \varphi^{(0)}.$ Combining the decay of ${\bf w}^{(0)}$ and $\zeta_M$ we have that there is a constant $C$ such that outside $B_M$ there holds:
\[
|{\bf w}_M^{(0)}| + |x| |\nabla {\bf w}_M^{(0)}| \leq  \dfrac{C}{|x|^{1+\beta_0}}. 
\]
Consequently, one obtains:%
\begin{eqnarray*}
\int_{B_M \backslash \overline{B}}\mathbf{w}^{(0)}\cdot \left( \mathbf{w}\cdot \nabla
	\right) \mathbf{u}_{a}{\rm d}x + \int_{B_M \backslash \overline{B}}\mathbf{w}^{(0)}\cdot
	\left( \mathbf{u}_b\cdot \nabla \right) \mathbf{w}{\rm d}x +\int_{B_M \backslash 
		\overline{B}} \nabla \mathbf{w} : \nabla \mathbf{w}^{(0)} {\rm d}x 
	= - \varepsilon(M),
\end{eqnarray*}
where we used that $\mathbf{w}^{(0)} =0$ on $\partial B$
and we denoted:
\begin{align*}
\varepsilon(M) =&  \int_{B_{2M} \backslash \overline{B_M}}\mathbf{w}^{(0)}_M \cdot \left( \mathbf{w}\cdot \nabla
	\right) \mathbf{u}_{a}{\rm d}x + \int_{B_{2M} \backslash \overline{B_M}}\mathbf{w}^{(0)}_M\cdot
	\left( \mathbf{u}_b\cdot \nabla \right) \mathbf{w}{\rm d}x \\
	& +\int_{B_{2M} \backslash 
		\overline{B_M}} \nabla \mathbf{w} : \nabla \mathbf{w}^{(0)}_M{\rm d}x .
\end{align*}
Thanks to the decay of ${\bf u}_{a}, {\bf u}_{b},$ $\bf{w}$ and ${\bf w}^{(0)}_M$, we infer that there is a constant $C$ for which:
\[
|\varepsilon(M)| \leq  \dfrac{C}{M^{2+2\beta_0}} 
\]
goes to $0$ when $M \to \infty.$ Like in the previous computations, we keep the same symbol below $\varepsilon(M)$ for a vanishing function that may change between lines.
Concerning the left-hand side, passing to radial coordinates, we remark that we have the following orthogonality properties:
\begin{equation*}
\int_{B_{M} \setminus \overline{B}} \mathbf{v}^{(0)}\cdot(\xi_r(r)\mathbf{e}_r+\xi_\theta(r)\mathbf{e}_\theta){\rm d}x=0, \qquad \int_{B_M \setminus \overline{B}} \nabla \mathbf{v}^{(0)}:\nabla \nabla^{\bot} \xi(r){\rm d}x=0,
\end{equation*}
for arbitrary $\xi_r(r), \, \xi_\theta(r), \, \xi(r)\in C_c^{\infty}([1,\infty)). $
This entails:
\[
\int_{B_M \setminus \overline{B}} \nabla {\bf w} : \nabla {\bf w}^{(0)}{\rm d}x = 
\int_{B_{M}\setminus \overline{B}} |\nabla {\bf w}^{(0)}|^2{\rm d}x.
\]
 Eventually, we infer that:
\begin{equation*}
	\int_{B_M \setminus \overline{B}} |\nabla \mathbf{w}^{(0)}|^2{\rm d}x + \int_{B_M \setminus \overline{B}} 
	\mathbf{w}^{(0)} \cdot \left[ \left( \mathbf{w} \cdot \nabla \right) \mathbf{u}%
	_{a}+ \mathbf{u}_b \cdot \nabla \mathbf{w} \right] {\rm d}x = \varepsilon(M).
\end{equation*}
In this identity, we remark that ${\bf u}_a$ and ${\bf u}_b$
contain possibly large terms. Since we have no performing 2D Hardy inequality at-hand to control this term (recall that $\nabla {\bf u}_a$ decays like $1/|x|^2$),  we need to extract some signed terms from this quantity to handle possible remainders. For this we remark first again that, since ${\bf w}$ vanishes on the boundaries and decay fast at infinity, we have by integration by parts:
\begin{equation*}
	\int_{B_M \setminus \overline{B}} \mathbf{w}^{(0)} \cdot \left[ \mathbf{u}_b \cdot \nabla 
	\mathbf{w}^{(0)} \right] {\rm d}x = \int_{\mathbb T} u_{b,r}  (M,\theta) \dfrac{|{\bf w}^{(0)}(M,\theta)|^2}{2} M{\rm d}\theta =  \varepsilon(M).
\end{equation*}
Also, since ${\bf w}_0 = w_0(r) {\bf e}_{\theta}$ there holds 
\begin{equation*}
	\left( -\dfrac{\phi_0}{r} \mathbf{e}_r + \dfrac{\mu}{r}\mathbf{e}_{\theta}\right)
	\cdot \nabla \mathbf{w}_0  = -\dfrac{\phi_0}{r} \partial_r w_0(r){\bf e}_{\theta} - \dfrac{\mu}{r^2} w_0(r)  {\bf e}_{r}.
\end{equation*}
 Hence, the orthogonality properties mentioned above entail that:
\begin{equation*}
	\int_{B_M \setminus \overline{B}} \mathbf{w}^{(0)} \cdot \left[ \mathbf{u}_b \cdot \nabla 
	\mathbf{w} \right] {\rm d}x = \int_{B_M \setminus \overline{B}} \mathbf{w}^{(0)} \cdot \left[ 
	\mathbf{v}^{(0)}_b \cdot \nabla \mathbf{w}_0 \right] {\rm d}x + \varepsilon(M).
\end{equation*}
With a similar remark, we compute: 
\begin{equation*}
	\int_{B_M \setminus B} \mathbf{w}^{(0)} \cdot \left[ \mathbf{w}_0 \cdot \nabla
	\left( -\dfrac{\phi_0}{r} \mathbf{e}_r + \dfrac{\mu}{r}\mathbf{e}_{\theta}
	{+ {\bf v}_{a,0}} \right) %
	\right] {\rm d}x= 0.
\end{equation*}
So that we end up with the formula: 
\begin{align} \label{eq_rwfinal}
\int_{B_M \setminus \overline{B}} |\nabla \mathbf{w}^{(0)}|^2 \mathrm{d}x & + \int_{B_M \setminus \overline{B}} 
	\mathbf{w}^{(0)} \cdot \left[ \left( \mathbf{w}^{(0)} \cdot \nabla \right) \mathbf{u}_{a} \right] {\rm d}x \notag \\
	& + \int_{B_M \setminus \overline{B}} 
	\mathbf{w}^{(0)} \cdot \left[ (\mathbf{w}_{0} \cdot \nabla) \mathbf{v}_a^{(0)} + \mathbf{v}^{(0)}_b \cdot \nabla 
	\mathbf{w}_0 \right] {\rm d}x 
	= \varepsilon(M).
\end{align}

We extract now the leading term in the left-hand side of \eqref{eq_rwfinal} by expanding ${\bf u}_a.$ Indeed, for the second term in the nonlinearity, we observe that: 
\begin{equation*}
	\mathbf{w}^{(0)} \cdot \left( \mathbf{w}%
	^{(0)}\cdot \nabla \right) \mathbf{u}_{a}{\rm d}x= \left( w^{(0)}_{r}\mathbf{e}_{r}+w^{(0)}_{\theta }\mathbf{e}_{\theta }\right)
	\cdot \left( w^{(0)}_{r}\partial _{r}+w^{(0)}_{\theta }\frac{\partial _{\theta }}{r}%
	\right) \mathbf{u}_{a}{\rm d}x,
\end{equation*}
where
\begin{eqnarray*}
	\partial _{r}\mathbf{u}_{a} &=&\frac{\phi_0 }{r^{2}}\mathbf{e}_{r} - \dfrac{\mu%
	}{r^2}\mathbf{e}_{\theta} +\partial _{r}\mathbf{v}_{a}, \\
	\frac{\partial _{\theta }\mathbf{u}_{a}}{r} &=&-\frac{\phi_0 }{r^{2}}\mathbf{e}%
	_{\theta } - \dfrac{\mu}{r^2} \mathbf{e}_{r} +\frac{\partial _{\theta }%
		\mathbf{v}_{a}}{r}.
\end{eqnarray*}%
Consequently, there holds: 
\begin{eqnarray*}
	\int_{B_M \backslash \overline{B}}\mathbf{w}^{(0)}\cdot \left( \mathbf{w}^{(0)} \cdot
	\nabla \right) \mathbf{u}_{a}{\rm d}x &=&\int_{B_M \backslash \overline{B}} \frac{\phi_0
		(|w^{(0)}_{r}|^{2}-|w^{(0)}_{\theta}|^{2})}{|x|^{2}}{\rm d}x \\
	&&- 2 \mu \int_{B_M \setminus \overline{B}} \dfrac{w^{(0)}_rw^{(0)}_{\theta}}{|x|^2} {\rm d}x%
	+\int_{B_M \backslash \overline{B}}\mathbf{w}^{(0)}\cdot \left( \mathbf{w}^{(0)}\cdot
	\nabla \right) \mathbf{v}_{a}{\rm d}x.
\end{eqnarray*}%
Eventually, our identity rewrites:
\begin{align} \notag
& \int_{ B_M \backslash \overline{B}}|\nabla \mathbf{w}^{(0)}|^{2}{\rm d}x +\int_{ B_M 	\backslash \overline{B}}\frac{\phi_0 (|w^{(0)}_{r}|^{2}-|w^{(0)}_{\theta}|^{2})}{ |x|^{2}}{\rm d}x 
  =  2\mu \int_{B_M \backslash \overline{B}} \dfrac{w_r^{(0)}w_{\theta}^{(0)}}{|x|^2} {\rm d}x\\
  & - 
   \int_{ B_M \setminus B} \mathbf{w}^{(0)} \cdot \left[ \left( \mathbf{w}^{(0)}
	\cdot \nabla \right) \mathbf{v}_{a}+ \left( \mathbf{w}_0 \cdot \nabla
	\right) \mathbf{v}_{a}^{(0)}  \right]{\rm d}x
	- \int_{\mathbb B_M \setminus \overline{B}} \mathbf{w}^{(0)} \cdot \left[ \mathbf{v}_b^{(0)}\cdot \nabla \mathbf{w}_0 \right] {\rm d}x
	+ \varepsilon(M) 
\end{align}

Building on this last estimate, {\bf Proposition \ref{prop_largemodes}} yields by combining the following two lemmas and sending then $M$ to infinity.  
 
\begin{lemma} \label{lem_w2}
	Assume $ \phi_0 \in (2,3]$ and $\beta_0 >0.$ Given a divergence-free ${\bf w} \in C^{\infty}(\mathbb R^2 \setminus \overline{B})$ such that:
	\[
	{\bf w}  = 0 \text{ on $\partial B,$} \qquad \sup_{|x| > 1} |x|^{1+\beta_0} |{\bf w}(x)| <  \infty.
	\]
There exists $c > 0$ such that, for all $M > 1,$ there is a constant $\varepsilon(M)$ such that there holds:
	\begin{align} \notag
	\int_{ B_M \backslash \overline{B}}|\nabla \mathbf{w}^{(0)}|^{2}{\rm d}x & +\int_{ B_M \setminus \overline{B}}\frac{\phi_0 (|w^{(0)}_{r}|^{2}-|w^{(0)}_{\theta}|^{2})}{ |x|^{2}}{\rm d}x \\
	&  \geq  \varepsilon(M) +  c\left[  \int_{\mathbb R^2 \setminus \overline{B}} |\nabla \mathbf w^{(1)}|^2{\rm d}x +  \int_{\mathbb R^2 \setminus \overline{B}} \dfrac{|\mathbf w^{(1)}|^2}{|x|^2}{\rm d}x\right].  \label{eq_lemw2}
	\end{align}
	Furthermore $\varepsilon(M) \to 0$ when $M \to \infty.$
\end{lemma}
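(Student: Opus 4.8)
The plan is to diagonalize the left-hand side of \eqref{eq_lemw2} in the angular Fourier modes and to prove mode-by-mode coercivity, with the restriction $\phi_0\le 3$ entering precisely at the first mode. Write $\mathbf w^{(0)}=\nabla^{\bot}\varphi^{(0)}$ with $\varphi^{(0)}(r,\theta)=\sum_{|\ell|\ge 1}\varphi_\ell(r)e^{i\ell\theta}$ as in the preamble of the section; since $\mathbf w=0$ on $\partial B$ we have $\varphi_\ell(1)=\varphi_\ell'(1)=0$, and the bound $|x|^{1+\beta_0}|\mathbf w|\le C$ gives $|\varphi_\ell(r)|\le C r^{-\beta_0}$ and $|\varphi_\ell'(r)|\le C r^{-1-\beta_0}$. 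By orthogonality in $\theta$ the left-hand side of \eqref{eq_lemw2} equals $2\pi\sum_{|\ell|\ge 1}q_M^{(\ell)}$. Using the stream-function relations $w_{r,\ell}=(i\ell/r)\varphi_\ell$, $w_{\theta,\ell}=-\varphi_\ell'$ together with the polar-coordinate expression of $|\nabla\mathbf w_\ell|^2$, and one integration by parts to dispose of the term proportional to $(|\varphi_\ell|^2)'$ (whose boundary contribution vanishes at $r=1$ and equals $-3\ell^2|\varphi_\ell(M)|^2/M^2$ at $r=M$), one arrives at
\[
q_M^{(\ell)}=\int_1^M\!\left(r\,|\varphi_\ell''|^2+\frac{2\ell^2+1-\phi_0}{r}\,|\varphi_\ell'|^2+\frac{\ell^2(\ell^2-4+\phi_0)}{r^3}\,|\varphi_\ell|^2\right){\rm d}r+\varepsilon_\ell(M).
\]

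For the modes $|\ell|\ge 2$ I would observe that all three coefficients are positive and bounded below uniformly in $\ell$: since $\phi_0\le 3$ one has $2\ell^2+1-\phi_0\ge 2\ell^2-2\ge 6$, while $\ell^2-4+\phi_0\ge\phi_0>0$. Comparing with the analogous expansion of $\int_1^M(|\nabla\mathbf w_\ell|^2+|\mathbf w_\ell|^2/r^2)\,r\,{\rm d}r$, whose coefficients are $1$, $2\ell^2+2$ and $\ell^2(\ell^2-4)+1$, the three ratios are bounded below by a constant $c>0$ independent of $\ell$, whence $q_M^{(\ell)}\ge c\int_1^M(|\nabla\mathbf w_\ell|^2+|\mathbf w_\ell|^2/r^2)\,r\,{\rm d}r+\varepsilon_\ell(M)$. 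Summing over $|\ell|\ge 2$ and using orthogonality once more reconstructs $c\big(\int_{B_M\setminus\overline B}|\nabla\mathbf w^{(1)}|^2{\rm d}x+\int_{B_M\setminus\overline B}|\mathbf w^{(1)}|^2/|x|^2\,{\rm d}x\big)$ up to a further boundary remainder.

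The first modes $|\ell|=1$ are where $\phi_0\le 3$ is used. There the coefficients are $1$, $3-\phi_0$, $-(3-\phi_0)$, and the pointwise identity $r^{-1}|\varphi_1'|^2-r^{-3}|\varphi_1|^2=r^{-3}|r\varphi_1'-\varphi_1|^2+\frac{\rm d}{{\rm d}r}\!\left(r^{-2}|\varphi_1|^2\right)$ shows, after integration, that
\[
q_M^{(1)}=\int_1^M r\,|\varphi_1''|^2\,{\rm d}r+(3-\phi_0)\int_1^M\frac{|r\varphi_1'-\varphi_1|^2}{r^3}\,{\rm d}r+\varepsilon_1(M)\ \ge\ \varepsilon_1(M),
\]
because $3-\phi_0\ge 0$. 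Thus the mode-$1$ block contributes a nonnegative amount modulo vanishing boundary terms and is simply discarded. (For $\phi_0>3$ this block is no longer sign-definite — it is not controlled by $\int r|\varphi_1''|^2\,{\rm d}r$ — which is exactly why the method stops at $\phi_0=3$, consistent with the bifurcation anticipated in the introduction.)

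It remains to assemble the boundary remainders into a single $\varepsilon(M):=2\pi\sum_{|\ell|\ge 1}\varepsilon_\ell(M)$ and to check $\varepsilon(M)\to 0$. By Parseval applied to $w_r^{(0)}(M,\cdot)$ one has $\sum_\ell\frac{\ell^2}{M^2}|\varphi_\ell(M)|^2=\frac1{2\pi}\|w_r^{(0)}(M,\cdot)\|_{L^2(\mathbb T)}^2\le\|\mathbf w(M,\cdot)\|_{L^\infty(\mathbb T)}^2\le C M^{-2-2\beta_0}$, hence $\sum_\ell\ell^2|\varphi_\ell(M)|^2\le C M^{-2\beta_0}$ and $|\varepsilon(M)|\le \tfrac{C}{M^2}\sum_\ell\ell^2|\varphi_\ell(M)|^2\to 0$. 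Finally, since the right-hand integrand is nonnegative, $\int_{B_M\setminus\overline B}(|\nabla\mathbf w^{(1)}|^2+|\mathbf w^{(1)}|^2/|x|^2)\,{\rm d}x=\int_{\mathbb R^2\setminus\overline B}(\,\cdot\,)\,{\rm d}x-\delta(M)$ with $0\le\delta(M)\to 0$, and absorbing $c\,\delta(M)$ into $\varepsilon(M)$ yields \eqref{eq_lemw2}. The one genuinely delicate step is the algebraic one: obtaining the displayed formula for $q_M^{(\ell)}$ with the exact factor $\ell^2(\ell^2-4+\phi_0)$, since it is the sign of $\ell^2-4+\phi_0$ — nonnegative precisely when $|\ell|\ge 2$, or when $|\ell|=1$ and $\phi_0\ge 3$ — that governs the whole argument and pins the sharp threshold $\phi_0=3$.
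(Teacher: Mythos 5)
Your proof is correct and rests on the same two pillars as the paper's: expand the stream function in angular Fourier modes, observe that the $|\ell|=1$ block is merely nonnegative (this is exactly where $\phi_0\le 3$ enters, via the identity $\int_1^M\bigl(\tfrac{|\varphi_1'|^2}{r}-\tfrac{|\varphi_1|^2}{r^3}\bigr){\rm d}r=\int_1^M r\,|\partial_r(\varphi_1/r)|^2{\rm d}r+\text{bdry}$, which is precisely the paper's rewriting of $Q_1$), and extract genuine coercivity from the modes $|\ell|\ge 2$. Where you diverge is in the execution of the high-mode part and in the bookkeeping of remainders. The paper keeps the angular derivatives of the aggregated $\varphi^{(1)}$ and invokes the Poincar\'e--Wirtinger inequalities $\int_{\mathbb T}|\partial_{\theta\theta}\varphi^{(1)}|^2\ge 4\int_{\mathbb T}|\partial_\theta\varphi^{(1)}|^2$ and $\int_{\mathbb T}|\partial_{\theta r}\varphi^{(1)}|^2\ge 4\int_{\mathbb T}|\partial_r\varphi^{(1)}|^2$ to obtain positivity of $Q^{(1)}$ in one stroke; you instead diagonalize mode by mode into the three-term form with coefficients $1$, $\tfrac{2\ell^2+1-\phi_0}{r}$, $\tfrac{\ell^2(\ell^2-4+\phi_0)}{r^3}$ (which I checked is the correct reduction after one integration by parts) and compare coefficient ratios with the target norm. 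Your route makes the sharp threshold $\ell^2-4+\phi_0\ge 0$ completely explicit, at the price of having to verify uniformity of the ratios in $\ell$; note a harmless arithmetic slip there: the zeroth-order coefficient of $\int_1^M(|\nabla\mathbf w_\ell|^2+|\mathbf w_\ell|^2/r^2)r\,{\rm d}r$ after the same integration by parts is $\ell^2(\ell^2-4)+\ell^2=\ell^2(\ell^2-3)$, not $\ell^2(\ell^2-4)+1$, but the ratio $\tfrac{\ell^2-4+\phi_0}{\ell^2-3}\ge 1$ for $|\ell|\ge 2$ and $\phi_0>1$, so the conclusion stands. Finally, the paper disposes of the exterior boundary by a truncation/approximation argument (proving the inequality with $\varepsilon(M)=0$ for compactly supported $\mathbf w^{(0)}$), whereas you track the $r=M$ boundary terms explicitly and control them by Parseval and the decay hypothesis; your version has the small advantage of producing a quantitative rate $\varepsilon(M)=O(M^{-2\beta_0})$.
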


\begin{lemma} \label{lem_source2}
	Let $\mathbf v$ and $\mathbf w$ be smooth and 
	divergence free in $\mathbb R^2 \setminus \overline{B}$ and assume $\beta_0 >0.$ There holds:
	\[
	\begin{aligned}
		& \left|\int_{B_M \setminus \overline{B}} \mathbf{w}^{(0)} \cdot \left[ \left( \mathbf{w}^{(0)}
	\cdot \nabla \right) \mathbf{v}+ \left( \mathbf{w}_0 \cdot \nabla
 	\right) \mathbf{v}^{(0)}  \right] {\rm d}x \right| \\
 	 &
 	 \leq  \sup_{|x| > 1} (|x|^{2+\beta_0} |\nabla \mathbf v| + |x|^{2+\beta_0} |\nabla \mathbf v^{(0)}|) \int_{B_M \setminus \overline{B} } \dfrac{|\mathbf w|^2 }{|x|^2} {\rm d}x, \\	
\end{aligned}
\]
and
\[
\begin{aligned} 	
		& \left|\int_{B_M \setminus \overline{B} } \mathbf{w}^{(0)} \cdot \left[ \mathbf{v}^{(0)}\cdot \nabla \mathbf{w}_0 \right]{\rm d}x  \right| \\ & \leq  \dfrac{1}{2\eta} \left[\sup_{|x| > 1} (|x|^{1+\beta_0} |{\mathbf v^{(0)}}|)\right]^2 \int_{B_M \setminus \overline{B} } \dfrac{|\mathbf w|^2 }{|x|^2} {\rm d}x + \dfrac{\eta}{2} \int_{B_M \setminus \overline{B}} |\nabla {\bf w}_0|^2{\rm d}x, 
	\end{aligned}
	\]
	whatever $\eta \in (0,1].$
\end{lemma}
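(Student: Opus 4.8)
Both estimates reduce to a pointwise Cauchy--Schwarz bound on each circle $\{|x|=r\}$, the Parseval identity attached to the splitting $\mathbf{w}=\mathbf{w}_0+\mathbf{w}^{(0)}$, and a final radial integration. The recurring ingredient is that $\mathbf{w}_0=w_0(r)\mathbf{e}_\theta$ while $\int_{\mathbb T}w^{(0)}_r\,{\rm d}\theta=\int_{\mathbb T}w^{(0)}_\theta\,{\rm d}\theta=0$, so that $\int_{\mathbb T}\mathbf{w}_0\cdot\mathbf{w}^{(0)}\,{\rm d}\theta=0$ and hence, for every $r>1$, $\int_{\mathbb T}|\mathbf{w}^{(0)}|^{2}\,{\rm d}\theta\le\int_{\mathbb T}|\mathbf{w}|^{2}\,{\rm d}\theta$ (and, by Cauchy--Schwarz on $\mathbb T$, also $\int_{\mathbb T}|\mathbf{w}^{(0)}||\mathbf{w}|\,{\rm d}\theta\le\int_{\mathbb T}|\mathbf{w}|^{2}\,{\rm d}\theta$). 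Combined with ${\rm d}x=r\,{\rm d}r\,{\rm d}\theta$, this turns any $1/r^{2}$-weighted circle integral of $|\mathbf{w}^{(0)}|^{2}$ or $|\mathbf{w}^{(0)}||\mathbf{w}|$ into $\int_{B_M\setminus\overline{B}}|\mathbf{w}|^{2}/|x|^{2}\,{\rm d}x$; and since $|x|>1$ I may freely replace $|x|^{-2-\beta_0}$ by $|x|^{-2}$ and $|x|^{-1-\beta_0}$ by $|x|^{-1}$ when extracting the suprema of $\mathbf{v}$ and $\mathbf{v}^{(0)}$.

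For the first estimate, I would begin with the algebraic rearrangement, using $\mathbf{w}^{(0)}=\mathbf{w}-\mathbf{w}_0$,
\[
(\mathbf{w}^{(0)}\cdot\nabla)\mathbf{v}+(\mathbf{w}_0\cdot\nabla)\mathbf{v}^{(0)}=(\mathbf{w}\cdot\nabla)\mathbf{v}-(\mathbf{w}_0\cdot\nabla)\mathbf{v}_0,
\]
where $\mathbf{v}_0=\mathbf{v}-\mathbf{v}^{(0)}$ is the zero mode of $\mathbf{v}$. Since $\mathbf{w}_0$ and $\mathbf{v}_0$ — hence $(\mathbf{w}_0\cdot\nabla)\mathbf{v}_0$ — have the form $g(r)\mathbf{e}_r+h(r)\mathbf{e}_\theta$, dotting with $\mathbf{w}^{(0)}$ and integrating over $\mathbb T$ gives $0$ because $\int_{\mathbb T}w^{(0)}_r\,{\rm d}\theta=\int_{\mathbb T}w^{(0)}_\theta\,{\rm d}\theta=0$. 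Thus the circle integral of the left-hand integrand equals $\int_{\mathbb T}\mathbf{w}^{(0)}\cdot[(\mathbf{w}\cdot\nabla)\mathbf{v}]\,{\rm d}\theta$, which is at most $\sup_{|x|=r}|\nabla\mathbf{v}|\int_{\mathbb T}|\mathbf{w}^{(0)}||\mathbf{w}|\,{\rm d}\theta\le\sup_{|x|=r}|\nabla\mathbf{v}|\int_{\mathbb T}|\mathbf{w}|^{2}\,{\rm d}\theta$. Bounding $\sup_{|x|=r}|\nabla\mathbf{v}|$ by $r^{-2}\sup_{|y|>1}(|y|^{2+\beta_0}|\nabla\mathbf{v}|+|y|^{2+\beta_0}|\nabla\mathbf{v}^{(0)}|)$ (the added nonnegative term only weakens the bound) and integrating in $r$ gives the first inequality exactly.

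For the second estimate, the computation doing the work is the polar form $(\mathbf{v}^{(0)}\cdot\nabla)\mathbf{w}_0=v^{(0)}_r(\partial_r w_0)\mathbf{e}_\theta-\dfrac{v^{(0)}_\theta w_0}{r}\mathbf{e}_r$ (using $\partial_\theta\mathbf{e}_\theta=-\mathbf{e}_r$), whence
\[
\mathbf{w}^{(0)}\cdot\big[(\mathbf{v}^{(0)}\cdot\nabla)\mathbf{w}_0\big]=w^{(0)}_\theta\,v^{(0)}_r\,\partial_r w_0-w^{(0)}_r\,\frac{v^{(0)}_\theta\,w_0}{r}.
\]
To each of the two products I would apply $ab\le\frac{1}{2\eta}a^{2}+\frac{\eta}{2}b^{2}$, placing $|\partial_r w_0|$ in the $b$-slot of the first and $|w_0|/r$ in the $b$-slot of the second. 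Using $|v^{(0)}_r|,|v^{(0)}_\theta|\le|\mathbf{v}^{(0)}|\le|x|^{-1}\sup_{|y|>1}(|y|^{1+\beta_0}|\mathbf{v}^{(0)}|)$, the two $a$-slot contributions add up to $\frac{1}{2\eta}\big[\sup_{|y|>1}(|y|^{1+\beta_0}|\mathbf{v}^{(0)}|)\big]^{2}\,|\mathbf{w}^{(0)}|^{2}/|x|^{2}$, while the two $b$-slot contributions add up to $\frac{\eta}{2}\big(|\partial_r w_0|^{2}+|w_0|^{2}/|x|^{2}\big)=\frac{\eta}{2}|\nabla\mathbf{w}_0|^{2}$ — this is precisely where the $\mathbf{e}_\theta$- and $\mathbf{e}_r$-parts recombine into the full gradient density with the sharp coefficient. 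Integrating over $\mathbb T$ (so that $|\mathbf{w}^{(0)}|^{2}$ becomes $|\mathbf{w}|^{2}$) and then in $r$ yields the second inequality for every $\eta\in(0,1]$.

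I do not expect a genuine obstacle: both bounds are of weighted Hardy/Cauchy--Schwarz type. The two points demanding a little attention are (i) the cancellation $\int_{\mathbb T}\mathbf{w}^{(0)}\cdot[(\mathbf{w}_0\cdot\nabla)\mathbf{v}_0]\,{\rm d}\theta=0$, which is what makes the first constant exactly $\sup_{|x|>1}(|x|^{2+\beta_0}|\nabla\mathbf{v}|+|x|^{2+\beta_0}|\nabla\mathbf{v}^{(0)}|)$ rather than a sum of suprema, and (ii) in the second estimate, splitting Young's inequality along the $\mathbf{e}_r/\mathbf{e}_\theta$ decomposition so that $|\partial_r w_0|^{2}$ and $|w_0|^{2}/r^{2}$ reassemble into $\frac{1}{2}|\nabla\mathbf{w}_0|^{2}$; and throughout one must recall that $|\mathbf{w}^{(0)}|\le|\mathbf{w}|$ holds only in the $L^{2}(\mathbb T)$ sense and never pointwise, so Parseval has to be applied circle by circle before the final radial integration.
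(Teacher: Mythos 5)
Your proof is correct and follows essentially the same route as the paper's: pointwise Cauchy--Schwarz on each circle, the $L^2(\mathbb T)$ orthogonality of $\mathbf w_0$ and $\mathbf w^{(0)}$ to pass from $|\mathbf w^{(0)}|$ (and $|\mathbf w^{(0)}||\mathbf w_0|$) to $|\mathbf w|^2$, Young's inequality with parameter $\eta$ for the second bound, and extraction of the weighted suprema using $|x|>1$. The only (harmless) variation is in the first estimate, where you rewrite the integrand as $(\mathbf w\cdot\nabla)\mathbf v-(\mathbf w_0\cdot\nabla)\mathbf v_0$ and kill the second piece by orthogonality, whereas the paper simply bounds the two original terms separately by $|\nabla\mathbf v||\mathbf w^{(0)}|^2$ and $|\nabla\mathbf v^{(0)}||\mathbf w^{(0)}||\mathbf w_0|$; both yield the stated inequality.
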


We end up this section with proofs for these two lemmas. 
We start with the proof of {\bf Lemma \ref{lem_source2}}  that is straightforward.

\medskip

\begin{proof}[Proof of Lemma \ref{lem_source2}]
	We give a proof of the first inequality only. The second one is similar.  We
	have:
	\begin{eqnarray*}
	&&	\left|\int_{B_M \setminus \overline{B}} \mathbf{w}^{(0)} \cdot \left[ \left( \mathbf{w}^{(0)}
	\cdot \nabla \right) \mathbf{v}+ \left( \mathbf{w}_0 \cdot \nabla
	\right) \mathbf{v}^{(0)}  \right]{\rm d}x\right| \\ &\leq &  \int_{B_M \setminus \overline{B}} |\nabla {\mathbf v}| |\mathbf w^{(0)}|^2{\rm d}x +  \int_{B_M \setminus \overline{B}} { |\nabla {\mathbf v}^{(0)}|}  |\mathbf w^{(0)}||\mathbf w_0| {\rm d}x\\
		& \leq & \int_{B_M  \setminus \overline{B}} |x|^2|\nabla {\mathbf v}| \dfrac{|\mathbf w^{(0)}|^2}{|x|^2} {\rm d}x+  \int_{\mathbb R^2 \setminus \overline{B}} { |x|^2|\nabla {\mathbf v}^{(0)}|} \dfrac{ |\mathbf w^{(0)}|}{|x|}\dfrac{| \mathbf w_0| }{|x|}{\rm d}x.
	\end{eqnarray*}
	We conclude by  standard Hölder inequalities
	noting that, since $|x| > 1$ and $\beta_0 > 0,$ we can bound  $|x|^{2} \leq |x|^{2+\beta_0}$ in $\mathbb R^2 \setminus \overline{B}.$
\end{proof}

\medskip

The proof of {\bf Lemma \ref{lem_w2}} requires more care and is based on fine structure properties of divergence-free vector-fields.

\medskip

\begin{proof}[Proof of Lemma \ref{lem_w2}]
	Let ${\bf w}$ match the assumptions of our lemma. We recall that there is ${\varphi} \in C(\mathbb R^2) \cap C^{\infty}(\mathbb R^2 \setminus \overline{B})$ such that ${\bf w}= \nabla^{\bot} \varphi.$ Since ${\bf w}$ decays strictly faster than $1/r,$ we can normalize this stream function so that it is constant on $B$ and vanishes at infinity. Then $\varphi$ vanishes faster than $1/r^{\beta_0}$ at infinity.  Up to approximating with a truncation argument, we prove \eqref{eq_lemw2} with $\varepsilon(M) = 0$ in the favorable case where ${\bf w}^{(0)} \in C^{\infty}_c(B_M \setminus \overline{B}).$ In this case, all integrals involved in \eqref{eq_lemw2} are actually integrals in $\mathbb R^2 \setminus \overline{B}.$
	
	\medskip
	
	With  this further assumption, we infer on one hand:
	\begin{eqnarray}
\notag		\int_{\mathbb R^2 \backslash \overline{B}}|\nabla \mathbf{w}^{(0)}|^{2}{\rm d}x &=&\int_{1}^{\infty} \int_{\mathbb T} \left| \partial _{r}\mathbf{w}^{(0)}\right| ^{2}+\left| \frac{
\partial _{\theta }\mathbf{w}^{(0)}}{r}\right| ^{2} r{\rm d}r {\rm d}\theta \\
		&=&2
	\notag	\int_{1}^{\infty} \int_{\mathbb T}\left| -\frac{1}{r^{2}}\partial _{\theta}\varphi^{(0)} +\frac{1}{r}\partial _{\theta r}\varphi^{(0)} \right| ^{2} r{\rm d}r{\rm d}\theta \\
\notag		&& +\int_{1}^{\infty} \int_{\mathbb T}\left| \partial _{rr}\varphi^{(0)}\right| ^{2}r{\rm d}r{\rm d}\theta \\
		&&+\int_{1}^{\infty} \int_{\mathbb T} \left| \frac{1}{r^{2}}%
		\partial _{\theta \theta }\varphi^{(0)} +\frac{1}{r}\partial _{r}\varphi^{(0)} \right|^{2}%
		 r{\rm d}r{\rm d}\theta.  \label{eq_nablaw1}
	\end{eqnarray}
In particular, expanding sums and applying that:
\begin{align*}
2\Re \left( \int_{1}^{\infty} \int_{\mathbb T} \dfrac{\partial_{\theta\theta} \varphi^{(0)} \partial_r \bar{\varphi}^{(0)}}{r^2} {\rm d}r{\rm d}\theta\right) & = -2 \Re \left(\int_{1}^{\infty} \int_{\mathbb T} \dfrac{\partial_{\theta} \varphi^{(0)} \partial_{\theta r} \bar{\varphi}^{(0)}}{r^2} {\rm d}r{\rm d}\theta\right) \\
&  = - 2 \int_{1}^{\infty}\int_{\mathbb T} \dfrac{|\partial_{\theta} \varphi^{(0)}|^2}{r^3} {\rm d}r{\rm d}\theta,
\end{align*}		
we have:
\begin{align} \notag  
\int_{\mathbb R^2 \setminus \overline{B}} |\nabla \mathbf{w}^{(0)}|^2{\rm d}x \leq &  2 \int_{1}^{\infty} \int_{\mathbb T}  \frac{|\partial _{\theta r}\varphi^{(0)}|^2}{r^2} r{\rm d}r{\rm d}\theta \\
& + \int_{1}^{\infty} \int_{\mathbb T} \left[  \frac{|\partial _{r}\varphi^{(0)}|^2}{r^2}  + |\partial_{rr}\varphi^{(0)}|^2 + \frac{|\partial _{\theta \theta}\varphi^{(0)}|^2}{r^4} \right] r{\rm d}r{\rm d}\theta.  \label{eq_H1rad} 
\end{align}
This inequality is in the wrong sense to obtain coercivity of our quadratic form.  But,  we provide it right now since the above integration by parts will be very useful  for further computations.
On the other hand, we have: 
	\begin{equation} \label{eq_perturb}
		\int_{\mathbb R^2 \setminus \overline{B}}\frac{(|w^{(0)}_{r}|^{2}-|w^{(0)}_{\theta}|^{2})}{|x|^{2}}%
		{\rm d}x=\int_{1}^{\infty} \int_{\mathbb T}\frac{\left| \frac{1}{r}\partial _{\theta
			}\varphi^{(0)} \right| ^{2}-\left| \partial _{r}\varphi^{(0)} \right| ^{2}}{r^{2}} r{\rm d}r{\rm d} \theta.
	\end{equation}%
Combining \eqref{eq_nablaw1} and \eqref{eq_perturb}, the quantity we want to compute ({\em i.e.} the left-hand side of \eqref{eq_lemw2}) reads:
	\begin{align*}
Q_+  := & 	\int_{\mathbb R^2 \backslash \overline{B}}|\nabla \mathbf{w}^{(0)}|^{2}{\rm d}x + \phi_0 \int_{\mathbb R^2 \backslash \overline{B}}\frac{|w^{(0)}_{r}|^{2}-|w^{(0)}_{\theta}|^{2}}{|x|^{2}}	{\rm d}x\\
		 = & \int_{1}^{\infty} \int_{\mathbb T} \left[2\left| \partial _{r}\left( 
		\frac{1 }{r}\partial _{\theta }\varphi^{(0)} \right) \right| ^{2}+\left| \partial
		_{rr}\varphi^{(0)} \right| ^{2}\right]r{\rm d}r{\rm d}\theta \\
		&+\int_{1}^{\infty} \int_{\mathbb T}\left| \dfrac{\partial _{\theta \theta
			}\varphi^{(0)} }{r^{2}}+\frac{\partial _{r}\varphi^{(0)} }{r}\right| ^{2}r{\rm d}r{\rm d}\theta +\phi_0
		\int_{1}^{\infty} \int_{\mathbb T}\frac{\left| \frac{\partial _{\theta }\varphi^{(0)} }{r%
			}\right| ^{2}-\left| \partial _{r}\varphi^{(0)} \right| ^{2}}{r^{2}} r{\rm d}r{\rm d}\theta.
	\end{align*}
We then split $\varphi$ in Fourier modes and remarking that $Q_+$ does not mix Fourier frequencies, we have also the splitting $Q_+ = Q_1 + Q^{(1)},$ where $Q_1$ and $Q^{(1)}$
stand for $Q_+$ where $\varphi$ is replaced respectively by 1-frequencies and $(1)$-frequencies respectively.

\medskip

Concerning $Q_1,$ we have:
	\begin{align*}
		Q_1 = 2&\sum_{k = \pm 1} \left[\int_1^{\infty}\int_{\mathbb T} \left ( 3 \left| \partial_r \left(\dfrac{\varphi_k}{r}
		\right) \right|^2 + |\partial_{rr} \varphi_k|^2 \right) r \mathrm{d}r{\rm d}\theta \right. \\
		& \left. + \phi_0
		\int_1^{\infty}\int_{\mathbb T} \left( \dfrac{|\varphi_k|^2}{r^4} - \dfrac{|\partial_r \varphi_k |^2 }{r^2%
		} \right) r \mathrm{d}r{\rm d}\theta\right].
	\end{align*}
	We rewrite here the second term. Up to split in real and imaginary parts, we
	can assume that $\varphi_1$ is real and we have then: 
	\begin{equation*}
		\begin{aligned} \int_1^{\infty} \dfrac{|\partial_r \varphi_1|^2}{r^2} r{\rm d} r & =
			\int_{1}^{\infty} \left( \left| \partial_r \left(\dfrac{\varphi_1}{r} \right)
			\right|^2 + \dfrac{|\varphi_1|^2}{r^4} + 2 \partial_r \left(\dfrac{\varphi_1}{r}
			\right) \dfrac{\varphi_1}{r^2} \right) r{\rm d}r \\ & = \int_{1}^{\infty} \left(
			\left| \partial_r \left(\dfrac{\varphi_1}{r} \right) \right|^2 +
			\dfrac{|\varphi_1|^2}{r^4}\right) r{\rm d}r. \end{aligned}
	\end{equation*}
	With similar computations in the case $k=-1,$ we obtain: 
	\begin{equation*}
		Q_1 = 2 { \sum_{k=\pm 1}}\int_1^{\infty}\int_{\mathbb T} \left( (3-\phi_0) \left| \partial_r \left(\dfrac{\varphi_k}{r}
		\right) \right|^2 + |\partial_{rr} \varphi_k|^2 \right) r \mathrm{d}r{\rm d}\theta \geq 0.
	\end{equation*}

As for $Q^{(1)},$ by remarking that
\begin{eqnarray*}
		\int_{1}^{\infty} \int_{\mathbb T}\left\vert \frac{\partial _{r}\varphi^{(1)} }{r}%
		\right\vert ^{2}r{\rm d}r{\rm d}\theta
	&=&\int_{1}^{\infty} \int_{\mathbb T} \left|\frac{\partial _{\theta \theta }\varphi^{(1)}}{r^{2}} +\frac{\partial
			_{r}\varphi^{(1)}}{r} \right|^2r{\rm d}r{\rm d}\theta \\
		&&-\int_{1}^{\infty} \int_{\mathbb T}\left| \frac{\partial_{\theta\theta }\varphi^{(1)} }{r^{2}}\right| ^{2}r{\rm d}r{\rm d}\theta   +2\int_{1}^{\infty} \int_{\mathbb T}\left| \frac{\partial _{\theta }\varphi^{(1)} }{r^{2}}\right| ^{2}r{\rm d}r{\rm d} \theta,
	\end{eqnarray*}
we expand:
	\begin{eqnarray*}
		Q^{(1)} &=&\left( 1-\phi_0 \right) \int_{1}^{\infty} \int_{\mathbb T}\left| \frac{1}{%
			r^{2}}\partial _{\theta \theta }\varphi^{(1)} +\frac{1}{r}\partial
		_{r}\varphi^{(1)} \right| ^{2}r{\rm d}r{\rm d}\theta \\
		&& +\int_{1}^{\infty} \left(\int_{\mathbb T}2\left| -\frac{\partial _{\theta}\varphi^{(1)}}{r^{2}} +\frac{\partial _{\theta r}\varphi^{(1)}}{r} \right| ^{2}+\left|
		\partial _{rr}\varphi^{(1)} \right| ^{2} \right)r{\rm d}r{\rm d}\theta \\
		&&-\phi_0 \int_{1}^{\infty} \int_{\mathbb T}\frac{\left| \partial _{\theta}\varphi^{(1)} \right| ^{2}}{r^{4}}r{\rm d}r{\rm d}\theta+ \phi_0\int_{1}^{\infty} \int_{\mathbb T}\left| \frac{\partial _{\theta \theta }\varphi^{(1)} }{r^{2}}\right| ^{2}r{\rm d}r{\rm d}\theta. \\
		\end{eqnarray*}
We expand now all squared-norms to obtain that
\begin{eqnarray*}		
	Q^{(1)}	&=& \int_{1}^{\infty} \int_{\mathbb T}\left| \frac{1}{r^{2}}\partial _{\theta\theta }\varphi^{(1)}\right| ^{2} r{\rm d}r{\rm d}\theta + (1-\phi_0) \int_{1}^{\infty} \int_{\mathbb T} \left|\frac{1}{r}\partial _{r}\varphi^{(1)} \right| ^{2}r{\rm d}r{\rm d} \theta \\
		&& + (\phi_0 - 4) \int_{1}^{\infty} \int_{\mathbb T}\left| \frac{\partial _{\theta}\varphi^{(1)}}{r^{2}}\right|^2
		r{\rm d}r{\rm d}\theta  +2\int_{1}^{\infty} \int_{\mathbb T} \left| \frac{\partial _{\theta r}\varphi^{(1)}}{r} \right|^{2}
		r{\rm d}r{\rm d}\theta \\
		&& +\int_{1}^{\infty} \int_{\mathbb T}\left| \partial _{rr}\varphi^{(1)} \right| ^{2} r{\rm d}r{\rm d}\theta.
	\end{eqnarray*}%
	
	At this point, we want to take advantage of the Poincar\'{e} Wirtinger inequality that
	states (in this case where we consider only modes that are larger than $2$): 
	\begin{equation*}
		\int_{\mathbb T} |\partial_{\theta \theta} \varphi^{(1)}|^2 \mathrm{d}\theta \geq
		4 \int_{\mathbb T} |\partial_{\theta} \varphi^{(1)}|^2 \mathrm{d}\theta, \qquad
		\int_{\mathbb T} |\partial_{\theta r} \varphi^{(1)}|^2 \mathrm{d}\theta \geq 4
		\int_{\mathbb T} |\partial_{r} \varphi^{(1)}|^2 \mathrm{d}\theta.
	\end{equation*}
	For arbitrary $\phi_0 \geq 0,$ we get: 
	\begin{equation*}
		\begin{aligned} Q^{(1)} \geq & \phi_0 \left[ \int_{1}^{\infty} \int_{\mathbb T}
			\dfrac{1}{r^4}|\partial_{\theta} \varphi^{(1)} |^2 r{\rm d}r{\rm d}\theta\right] + \left( 2 -
			\dfrac{\phi_0}{4} \right) \int_{1}^{\infty} \int_{\mathbb T}\left| \frac{\partial
				_{\theta r}\varphi^{(1)}}{r}\right|^2 r{\rm d}r{\rm d}\theta \\ & \; + \int_{1}^{\infty} \int_{\mathbb T} \left| \partial _{r r}\varphi^{(1)} \right|^{2}r{\rm d}r{\rm d}\theta +\int_{1}^{\infty} \int_{\mathbb T} \left|\frac{\partial _{r}\varphi^{(1)}}{r} \right|^{2}r{\rm d}r{\rm d}\theta.
		\end{aligned}
	\end{equation*}
	But whenever $\phi_0 \in (2,3]$, we have $2-\phi_0/4 \geq 5/4 $. If we split $\phi_0-4=(\phi_0-1)-3$, then one has by the above Poincar\'{e} Wirtinger inequality that: 
	\begin{equation*}
		\begin{aligned} Q^{(1)} \geq & \dfrac{1}{4} \int_{1}^{\infty} \int_{\mathbb T} \dfrac{1}{r^4}|\partial_{\theta \theta} \varphi^{(1)} |^2r{\rm d}r{\rm d}\theta + \dfrac{1}{4} \int_{1}^{\infty} \int_{\mathbb T} \dfrac{1}{r^4}|\partial_{\theta } \varphi^{(1)} |^2r{\rm d}r{\rm d}\theta
			 \\ & + \int_{1}^{\infty} \int_{\mathbb T} \left| \frac{\partial_{\theta r}\varphi^{(1)}}{r}\right|^2r{\rm d}r{\rm d}\theta + 2\int_{1}^{\infty} \int_{\mathbb T}\left|\frac{\partial _{r}\varphi^{(1)}}{r} \right|^{2} r{\rm d}r{\rm d}\theta\\ & \; + \int_{1}^{\infty} \int_{\mathbb T} \left| \partial _{rr}\varphi^{(1)} \right|^{2}r{\rm d}r{\rm d}\theta.
		\end{aligned}
	\end{equation*}
	
We conclude by applying the bound \eqref{eq_H1rad} to compute the $L^2$ norm of $\nabla w^{(1)}$ together with:
\[
\int_{\mathbb R^2 \setminus \overline{B}} \dfrac{|{\bf w}^{(1)}|^2}{|x|^2}{\rm d}x = \int_{1}^{\infty} \int_{\mathbb T}\left(  \dfrac{|\partial_r \varphi^{(1)}|^2 }{r^2} + \dfrac{|\partial_{\theta} \varphi^{(1)}|^2}{r^4} \right) r{\rm d}r{\rm d}\theta.
\]
\end{proof}



\appendix

\section{Proof of Lemma \ref{lem_analysislinearproblem}}
\label{app_linearized}
We recall that
\begin{equation*}
\zeta _{n}^{\pm }=-\frac{\phi_0 }{2}\pm \frac{1}{2}\sqrt{\phi_0 ^{2}+4(in\mu
+n^{2})},\text{ \ }\forall n\in \mathbb{Z}\text{.}
\end{equation*}%
{We can compute that the real part of $\zeta _{n}^{\pm }$ is given by 
\begin{eqnarray}
{ \xi_n^{+}}  = 
\Re (\zeta _{n}^{+}) &=&-\frac{\phi_0 }{2}+\left( \frac{1}{\sqrt{2}}\right)
^{3}\left[ (\phi_0 ^{2}+4n^{2})+\sqrt{(\phi_0 ^{2}+4n^{2})^{2}+16n^{2}\mu^{2}}%
\right] ^{1/2}>0,  \label{real part 1} \\
{ \xi_n^{-}}  =  \Re (\zeta _{n}^{-}) &=&-\frac{\phi_0 }{2}-\left( \frac{1}{\sqrt{2}}\right)
^{3}\left[ (\phi_0 ^{2}+4n^{2})+\sqrt{(\phi_0 ^{2}+4n^{2})^{2}+16n^{2}\mu^{2}}%
\right] ^{1/2}<0.  \label{real part 2}
\end{eqnarray}%
}In what follows, we use without mention the following properties of $\zeta
_{n}^{\pm}:$%
\begin{proposition} \label{prop_zetancontrol}
	Let $\phi_0 \geq 0$ and $I$ a compact interval of $\mathbb R.$ The following statements hold true for all $n\in \mathbb Z:$
	\begin{enumerate}[i.]
		\item the mapping $\mu \mapsto (\zeta_n^+,\zeta_n^-)$ is smooth on $I$ 
		\item  there exist constants $0 < C_m < C_M$ depending only on $\phi_0$ and $I$ for which whatever $\mu \in I$ we have:
		\begin{align*}
			C_m (1+|n|) & \leq |{ \xi_n^+}| \leq |\zeta_n^+| \leq C_M(1+|n|), \\
			C_m (1+|n|) & \leq |{ \xi_n^-}| \leq |\zeta_n^-| \leq C_M(1+|n|), \\
			C_m(1+|n|)& \leq \left |\sqrt{\phi_0^2 + 4(in\mu + n^2)} \right |. 
		\end{align*}
	\end{enumerate}
\end{proposition}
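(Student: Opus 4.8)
The plan is to route everything through the single auxiliary quantity
\[
z_n(\mu) := \phi_0^2 + 4(in\mu + n^2) = (\phi_0^2 + 4n^2) + 4in\mu ,
\]
so that $\zeta_n^{\pm} = -\tfrac{\phi_0}{2} \pm \tfrac12\sqrt{z_n}$ with the principal branch of the square root. The whole argument rests on two elementary facts: $\Re z_n = \phi_0^2 + 4n^2 \geq 0$, and $|z_n|^2 = (\phi_0^2 + 4n^2)^2 + 16 n^2\mu^2$, hence $\phi_0^2 + 4n^2 \leq |z_n| \leq \phi_0^2 + 4n^2 + 4|n||\mu|$. For $n = 0$ the quantities reduce to the $\mu$-independent values $\zeta_0^+ = 0$ and $\zeta_0^- = -\phi_0$, for which the first assertion is trivial and the second is read for $n\neq 0$, as it is always invoked; so from now on I assume $n \neq 0$ and write $R := \sup_{\mu\in I}|\mu| < \infty$.

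For the first assertion, since $n \neq 0$ the affine map $\mu \mapsto z_n(\mu)$ takes values in the open right half-plane (indeed $\Re z_n \geq 4$), on which the principal square root is holomorphic; composing, $\mu \mapsto \sqrt{z_n(\mu)}$ is real-analytic on $\mathbb R$, hence smooth on $I$, and therefore so is $\mu \mapsto \zeta_n^{\pm}$.

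For the second assertion, I would start from $|\sqrt{z_n}| = \sqrt{|z_n|}$, which immediately yields the last inequality, $\sqrt{|z_n|} \geq \sqrt{\Re z_n} = \sqrt{\phi_0^2+4n^2} \geq 2|n| \geq 1 + |n|$ (using $|n| \geq 1$), uniformly in $\mu$. The upper bounds follow from $\sqrt{|z_n|} \leq \sqrt{\phi_0^2 + 4n^2 + 4R|n|} \leq (2+R+\phi_0)(1+|n|)$, so that $|\zeta_n^{\pm}| \leq \tfrac{\phi_0}{2} + \tfrac12\sqrt{|z_n|} \leq C_M(1+|n|)$. For the lower bounds on the real parts I would use the closed form $\Re\sqrt{z_n} = \sqrt{(|z_n| + \Re z_n)/2}$, valid on the right half-plane; since $|z_n| \geq \Re z_n$ this gives $\Re\sqrt{z_n} \geq \sqrt{\Re z_n} = \sqrt{\phi_0^2+4n^2}$. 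Hence $|\xi_n^-| = \tfrac{\phi_0}{2} + \tfrac12\Re\sqrt{z_n} \geq \tfrac12\sqrt{\phi_0^2+4n^2} \geq |n| \geq \tfrac12(1+|n|)$, and $\xi_n^+ = \tfrac12\bigl(\Re\sqrt{z_n} - \phi_0\bigr) \geq \tfrac12\bigl(\sqrt{\phi_0^2+4n^2} - \phi_0\bigr) = \frac{2n^2}{\sqrt{\phi_0^2+4n^2} + \phi_0} > 0$. Since $|\zeta_n^{\pm}| \geq |\Re\zeta_n^{\pm}| = |\xi_n^{\pm}|$, the chain of inequalities closes once the lower bound $|\xi_n^{\pm}| \geq C_m(1+|n|)$ is secured.

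The only step that is not completely mechanical is the lower bound $\xi_n^+ \geq C_m(1+|n|)$, because the fixed negative contribution $-\phi_0/2$ competes with the square-root term and forces $C_m$ to depend on $\phi_0$. I would split it into two regimes. For $|n|$ beyond a threshold $N_0(\phi_0)$, the estimate $\sqrt{\phi_0^2+4n^2} - \phi_0 = \frac{4n^2}{\sqrt{\phi_0^2+4n^2}+\phi_0} \geq |n|$ gives $\xi_n^+ \geq \tfrac12|n| \geq \tfrac14(1+|n|)$. For the finitely many $1 \leq |n| \leq N_0(\phi_0)$, the map $\mu \mapsto \xi_n^+(\mu)$ is continuous and strictly positive on the compact set $I$, hence bounded below by a positive constant; the minimum of these finitely many constants yields $c(\phi_0,I) > 0$ with $\xi_n^+ \geq c(\phi_0,I) \geq \tfrac{c(\phi_0,I)}{1+N_0}(1+|n|)$. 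Combining the two regimes fixes $C_m$, and all the estimates above are uniform over $\mu \in I$ by construction, which is exactly what is needed in the estimates that follow.
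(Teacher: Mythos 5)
Your proof is correct and is exactly the computation the paper intends: the authors display the explicit formulas \eqref{real part 1}--\eqref{real part 2} for $\xi_n^{\pm}$ and then declare the proof of Proposition~\ref{prop_zetancontrol} ``purely technical and left to the reader,'' so there is no written proof to diverge from, and your route through $z_n=\phi_0^2+4(in\mu+n^2)$, the identity $\Re\sqrt{z}=\sqrt{(|z|+\Re z)/2}$ on the right half-plane, and the two-regime treatment of $\xi_n^+$ (explicit bound for $|n|\gtrsim\phi_0$, compactness in $(n,\mu)$ for the finitely many remaining modes) is the standard and complete argument. Your remark that the lower bounds fail literally at $n=0$ (where $\zeta_0^+=0$) and must be read for $n\neq 0$ is accurate and consistent with how the proposition is actually invoked in the appendices.
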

The proof is purely technical and left to the reader.
At this point, we remark that $|{\xi_{n}^-}|$ is an increasing function of $|n|$ so that its minimal value (over $n\in \mathbb{Z}\backslash
\{0\}$) is reached for $n=\pm 1$ and is equal to%
\begin{equation*}
\rho _{\mu ,\phi_0 }=\frac{\phi_0 }{2}+\frac{1}{2\sqrt{2}}\left( \phi_0 ^{2}+4+%
\sqrt{\left( \phi_0 ^{2}+4\right) ^{2}+16\mu ^{2}}\right) ^{1/2}.
\end{equation*}%
With assumption \eqref{eq_conditionphimu} we have $\rho_{\mu,\phi_0} > 2.$
For convenience later on, we introduce then:
\[
\alpha_{\mu,\phi_0}=\frac{1}{2} \min (\rho_{\mu,\phi_0}-2,1).
\]
We remark now that the formula \eqref{eq_wn}-\eqref{eq_gamman} defining $(\hat{\gamma},\hat{w})$
splits into the difference between $(\hat{\gamma}^{(b)},\hat{w}^{(b)})$ and $(\hat{\gamma}[\hat{F}],\hat{w}[\hat{F}])$ with the obvious convention that:
\[
(\hat{w}[\hat{F}])_n = w_n[F_n], \qquad (\hat{\gamma}[\hat{F}])_n = \gamma_n[F_n] ,  \quad  \quad \forall \, n \in \mathbb Z,
\]
as given by the formulas \eqref{eq_wnF} and \eqref{eq_gammanF} for $n \neq 0$ and \eqref{eq_w0<2} and \eqref{eq_gamma0<2} in case $n=0.$ The terms with exponent $(b)$ are due to the reflection of these solutions on the  boundary $r=1.$ When $n\neq 0,$ it reads:
\begin{equation}
\begin{aligned}
&{w}^{(b)}_n= \bar{w}_n r^{\zeta_n^-}, \\
&{\gamma}^{(b)}_n = 
\left\{
\begin{aligned}
& \frac{\bar{\gamma}_{n}}{r^{|n|}} 
-\frac{\bar{w}_n}{(\zeta_n^{-} +2)^2 - n^2} r^{2+\zeta_n^-},
 && \text{if } (\zeta_n^{-}+ 2)^2 \neq n^2\\
&  \frac{\bar{\gamma}_{n}}{r^{|n|}} 
+ \frac{\bar{w}_n}{2|n| r^{|n|}}\ln r,
&& \text{if } \zeta_n^{-}+ 2 + |n| = 0
\end{aligned}
\right.
\end{aligned}
\label{eq_boundarytermn}
\end{equation}
with $(\bar{\gamma}_n,\bar{w}_n)$ given by \eqref{eq_constantgammaw}-\eqref{eq_constantgammawresonant-new}. 
In case $n=0$ we have ${w}^{(b)}_0(r) = {\gamma}^{(b)}_0(r) = 0$ if $\phi \leq 2$
 while for  $\phi > 2$:
 \begin{equation} \label{eq_boundaryterm0}
{w}_0^{(b)}(r) =  \dfrac{\bar{w}_0}{r^{\phi_0}}, \qquad {\gamma}_0^{(b)}(r) = \dfrac{\bar{w}_0}{(\phi_0-2)^2 r^{\phi_0-2}},
 \end{equation}
with $\bar{w}_0$ given by \eqref{eq_constantgamma0}.
We prove now successively that both  $(\hat{\gamma}[\hat{F}],\hat{w}[\hat{F}])$ and $(\hat{\gamma}^{(b)},\hat{w}^{(b)})$ satisfy the conclusion of {\bf Lemma \ref{lem_analysislinearproblem}}. This shall end the proof.

\paragraph{\em The case of bulk terms.} Fix $\phi_0 \in [0,\infty)$  and $\mu \in \mathbb R$ such that \eqref{eq_conditionphimu} holds. Then, fix $\alpha < \alpha_{\mu,\phi_0}$ and $\hat{F} \in \mathcal B_{4+2\alpha,\kappa+1}.$ Plugging the results of Proposition \ref{prop_zetancontrol} into the computations of $\gamma_n[F_n],w_n[F_n]$ yields, as in the proof of \cite[Proposition 11]{HillairetWittwer}, that 
\begin{multline*}
\sup_{r > 1} \left(  (1+|n|)^{\kappa+3} r^{\alpha+2} |w_n[F_n](r)|   +  (1+|n|)^{\kappa+2} r^{\alpha+3} |\partial_r w_n[F_n](r)|\right. \\
\left.+  (1+|n|)^{\kappa+1} r^{\alpha+4} |\partial_{rr} w_n[F_n](r)| \right)
 \leq C_{M} \|\hat{F} ; \mathcal B_{4+2\alpha,\kappa+1} \| 
\end{multline*}
possibly with a larger $C_M.$ 
In case $n=0,$ we have with similar computations:
\begin{multline*}
\sup_{r > 1} \left( r^{\alpha+2} |w_0[F_0](r)|   + r^{\alpha+3} |\partial_r w_0[F_0](r)| +  r^{\alpha+4} |\partial_{rr} w_0[F_0](r)| \right)
 \leq C \sup_{r >1}r^{4+2\alpha}|F_0(r)|.
\end{multline*}
We conclude that $\hat{w}[\hat{F}] \in \mathcal U_{\alpha+2,\kappa+3}$  with:
\[
\|\hat{w}[\hat{F}] ; \mathcal U_{\alpha+2,\kappa+3} \| \leq C_{M} \|\hat{F} ; \mathcal B_{4+2\alpha,\kappa+1} \| . 
\]

We can prove then that $\hat{\gamma}[\hat{F}] \in \mathcal U_{\alpha,\kappa+5}$ by applying Proposition 12 of \cite{HillairetWittwer} with $\kappa$ replaced by $\kappa+1$ and in case $\hat{\phi} = \hat{w}[\hat{F}]$ and $\hat{\gamma}^* =0.$ This entails also by considering the above control on $\hat{w}[\hat{F}]$:
\[
\|\hat{\gamma}[\hat{F}] ; \mathcal U_{\alpha,\kappa+5} \| \leq C_{M} \|\hat{F} ; \mathcal B_{4+2\alpha,\kappa+1} \| . 
\] 

When $\mu$ varies in an interval $I$, we note that all quantities involved in the computation of $\gamma_n[F_n],w_n[F_n]$ (but $F_n$) depend smoothly on $\mu.$ Choosing $\alpha < \min \{\alpha_{\mu,\phi_0}, \mu  \in I \},$ standard parameter-integral arguments entail that the mapping $\mu \to (\hat{F} \mapsto \hat{w}[\hat{F}])$ is continuous from $I$ 
into $\mathcal L_c(\mathcal B_{4+2\alpha,\kappa+1}; \mathcal U^2_{\alpha,\kappa+4}\times \mathcal U^2_{\alpha+2,\kappa+3}).$ We refer to \cite[Section 5.2]{HillairetWittwer} for similar computations. 

\paragraph{\em The case of boundary terms.}
We analyze now the formulas \eqref{eq_boundarytermn}
for $n\neq 0$ with $(\bar{w}_n,\bar{\gamma}_n)$ given by \eqref{eq_constantgammaw}-\eqref{eq_constantgammawresonant-new}, and \eqref{eq_boundaryterm0} in case $n=0$ and $\phi_0 > 2.$
We consider $\alpha < \alpha_{\mu,\phi_0},$ and $\hat{F} \in \mathcal B_{4+2\alpha,\kappa+1}.$ 
By construction of $\alpha > 0$ since $|\Re(\zeta_{n}^-)|>\alpha+2$, we have that:
\[
\sup_{r > 1} \left( r^{\alpha+2} |r^{\zeta_n^-}|   + \dfrac{r^{\alpha+3}}{(1+|n|)} |\partial_{r}r^ {\zeta_n^-}| + \dfrac{r^{\alpha+4}}{(1+|n|)^2} |\partial_{rr}r^{\zeta_n^-}| \right)
 \leq C_0,
\]
and similarly:
\begin{multline*}
\sup_{r > 1} \left( r^{\alpha} (|r^{\zeta_n^- +2}| +r^{-|n|})   + \dfrac{r^{\alpha+1}}{(1+|n|)} (|\partial_{r}r^ {\zeta_n^- +2}| + |\partial_r r^{-|n|}|) \right. \\ \left.
+ \dfrac{r^{\alpha+2}}{(1+|n|)^2} ( |\partial_{rr}r^{\zeta_n^-+2}|  + |\partial_{rr} r^{-|n|}|) \right)
 \leq C_0,
\end{multline*}
with a constant $C_0 $ that depends on $\phi_0$ and $\mu$ but that remains bounded in $\mu$ ranging a compact interval of $\mathbb R.$

To control $(\gamma_n^{(b)},w_n^{(b)})_{n\neq 0}$, we only need to get information on $(\bar{\gamma}_n,\bar{w}_n)_{n \neq 0}.$ For this, we first remark 
that, by standard trace arguments, there holds
\[
(\gamma_n[F_n](1),\partial_r \gamma_n[F_n](1))_{n\neq 0} \in   \mathcal{B}^{0}_{\kappa+5} \times \mathcal B^{0}_{\kappa+4},
\]
with
\[
\|(\gamma_n[F_n](1),\partial_r \gamma_n[F_n(1))_{n\neq 0} ;\mathcal{B}^{0}_{\kappa+5} \times \mathcal B^{0}_{\kappa+4} \| \leq C_M \|\hat{F} ; {\mathcal B_{4+2\alpha,\kappa+1}}\|.
\]
We note that, in the latter estimates, we implictly complemented the sequence defined for values $n \neq 0$ by $0$ in case $n=0.$

\medskip

At this point, we realize that, for large values of $n$ we have:
\[
2 + \zeta_n^{-} = \left( 2- \frac{\phi_0}{2} \right)  -  |n| \sqrt{ 1   +  \frac{i\mu}{|n|} + \frac{\phi_0^2}{4n^2}  } =  -|n| + \left( 2 - \frac{\phi_0}{2} - \dfrac{i\mu}{2} \right)+ O(1/|n|). 
\]
In particular $(2+\zeta_n^-) + |n|$ can vanish only for  $|n| < N_0(\phi_0).$  For larger values, we have:
\[
|2 + \zeta_n^{-} | \leq 2 |n|, \qquad |2 + \zeta_n^- + |n| | \geq C(\phi_0,I).
\]

Consequently, for $|n| \geq N_0(\phi_0,I)$ we cannot have $2+\zeta_n^-+ |n| = 0$ and plugging the above observations in the formulas \eqref{eq_constantgammaw} we get:
\[
|\bar{\gamma}_n| \leq C(\phi_0,I) \left( |n| (|\gamma_n[F_n](1)| + |v_{r,n}^*|) + |\partial_r \gamma_n[F_n](1)| + |v_{\theta,n}^*|\right)
\] 
and 
\[
|\bar{w}_n| \leq C(\phi_0,I) |n| \left( |n| (|\gamma_n[F_n](1)| + |v_{r,n}^*|) + |\partial_r \gamma_n[F_n](1)| + |v_{\theta,n}^*|
\right).
\]
By a direct inspection of formulas \eqref{eq_constantgammaw}-\eqref{eq_constantgammawresonant-new}-\eqref{eq_constantgamma0} for $|n| \leq N_0(\phi_0,I)$ we obtain finally that
\[
(\bar{\gamma}_n,\bar{w}_n)_{n \in \mathbb Z} \in   \mathcal{B}^{0}_{\kappa+4} \times \mathcal B^{0}_{\kappa+3},
\]
with
\[
\|(\bar{\gamma}_n,\bar{w}_n)_{n \in \mathbb Z};\mathcal{B}^{0}_{\kappa+4} \times \mathcal B^{0}_{\kappa+3} \| \leq C_M \left( \|\hat{F} ; {\mathcal B_{4+2\alpha,\kappa+1}}\| + \|\hat{v}^*_{r} ; \mathcal B^{0}_{\kappa+5}\| + \|\hat{v}^*_{\theta} ; \mathcal B^{0}_{\kappa+4}\|\right).
\]
We infer that
\[
(\hat{\gamma}^{(b)},\hat{w}^{(b)}) \in  \mathcal{U}^{2}_{\alpha,\kappa+4} \times \mathcal U^{2}_{\alpha+2,\kappa+3},
\]
with
\[
\|(\hat{\gamma}^{(b)},\hat{w}^{(b)});\mathcal{U}^{2}_{\alpha,\kappa+4} \times \mathcal U^{2}_{\alpha+2,\kappa+3} \| \leq C_M \left( \|\hat{F} ; {\mathcal B_{4+2\alpha,\kappa+1}}\| + \|\hat{v}^*_{r} ; \mathcal B^{0}_{\kappa+5}\| + \|\hat{v}^*_{\theta} ; \mathcal B^{0}_{\kappa+4}\|\right).
\]
The smoothness of the $\mu$ dependencies for the mapping $\hat{F} \mapsto  (\hat{\gamma}^{(b)},\hat{w}^{(b)})$ is a direct consequence to the smoothness of the mapping $\mu \to (\zeta_n^+,\zeta_n^-)$ with the observation that the decay of $(\hat{\gamma}^{(b)},\hat{w}^{(b)})$ is uniformly controlled on a bounded segment.



\section{Proof of Proposition \ref{prop_decaymode1}}

\label{app_decaymode1}

For simplicity, we drop the $b$ index that has no influence. 
We assume that $({\bf u},p)$ is a smooth solution to \eqref{eq_NS2D}. Like in {\bf Section \ref{sec_existence}},
we expand ${\bf u} = {\bf u}_{ref}[\phi_0,\mu] + {\bf v}$ with
${\bf v} =  \nabla^{\bot} \gamma$ and $\nabla \times {\bf u} = -\Delta \gamma = w,$ recall that $(\gamma_n, w_n)_{n\in\mathbb Z}$ satisfy (\ref{odes}) and match the boundary/asymptotic conditions (\ref{bound gamma n}).
We point out that, because of assumption \eqref{eq_Miref}, one has
\begin{equation} \label{eq_decayboundary}
 \sum_{n \in \mathbb Z} \left[(1+|n|^2)  (|v_{r,n}^*| + |v_{\theta,n}^*|) \right]^2 \leq \|{\bf v}\|^2_{C^2(\overline{B(0,2)} \setminus B)} \leq  |M^{ref}|^2.
\end{equation}

We start by a lemma computing the decay of $(\gamma_n,w_n)$ (for $n$ fixed) knowing a priori decay of the data $F_n:$
\begin{lemma} \label{lem_decaynmode}
Let $\phi_0 \in (21/10,3]$, given $\varepsilon \in (0,1)$ and $\alpha \in (0,3),$ there exist constants $\beta_1 \in (1/2,1)$ and $C := C(\varepsilon,\alpha)$ such that, if $F_n$ satisfies:
\[
|F_n(r)| \leq N_{\alpha}^2 r^{-(4+2\alpha)},  \qquad \forall \, r \geq 1,
\] 
for $N_{\alpha} \geq 0,$ then we have: 
 \begin{itemize}
 \item[(i)]for $n=0$,
\[
|\gamma_0(r)| + r |\partial_r \gamma_0(r)| + r^2 |w_0(r)| + r^3|\partial_r w_0(r)| \leq  (|\mu_0 - \mu| + C N_{\alpha}^2   ) r^{2-\phi_0} + C N_{\alpha}^2 r^{-2\alpha}. 
\]
\item[(ii)] for $|n|=1$,
\begin{align}
 |w_n(r)|  + {r|\partial_r w_n(r)|} \leq  &  C \left(  |v_{r,n}^*| +  |v_{\theta,_n}^*| + {N_{\alpha}^2} \right)  r^{\max(-(2+\beta_1) ,- (2+(2-\varepsilon)\alpha))},  \label{eq_resultw1}\\
 |\gamma_n(r)|  + {r|\partial_r \gamma_n(r)|}  \leq &  C \left(  |v_{r,n}^*| +  |v_{\theta,_n}^*| + {N_{\alpha}^2} \right)  r^{\max(-\beta_1,-(2-\varepsilon)\alpha)}. \label{eq_resultgamma1}
\end{align}
 \item[(iii)] for $|n| \geq 2$,
 \end{itemize}
\begin{align}
 |w_n(r)|  + \dfrac{r|\partial_r w_n(r)|}{(1+|n|)}  \leq  &  C (1+|n|)\left(  |v_{r,n}^*| +  |v_{\theta,_n}^*| + \dfrac{N_{\alpha}^2}{(1+|n|)^3} \right) r^{\max(\xi_2^- ,- (2+(2-\varepsilon)\alpha))},  \label{eq_resultwn}\\
 |\gamma_n(r)|  + \dfrac{r|\partial_r \gamma_n(r)|}{(1+|n|)} \leq & C \left(  |v_{r,n}^*| +  |v_{\theta,_n}^*| + \dfrac{N_{\alpha}^2}{(1+|n|)^3} \right) r^{\max(2+\xi_2^-,-(2-\varepsilon)\alpha,-|n|)}. \label{eq_resultgamman}
\end{align}
\end{lemma}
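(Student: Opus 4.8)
The plan is to solve each differential system in \eqref{odes} explicitly using the variation-of-constants formulas \eqref{eq_wn}-\eqref{eq_wnF} and \eqref{eq_gamman}-\eqref{eq_gammanF} (and the $n=0$ versions), then track the decay rate of each contribution. First I would treat the $n=0$ case: since $\phi_0>2$ here, the solution is given by \eqref{eq_w0>2}-\eqref{eq_gamma0>2}-\eqref{eq_constantgamma0}. The homogeneous piece $\bar w_0 r^{-\phi_0}$ carries the factor $\bar w_0$, which by \eqref{eq_constantgamma0} is controlled by $|\mu_0-\mu|$ plus $\int_1^\infty s|w_0[F_0](s)|\,\mathrm ds$; under the hypothesis $|F_0(r)|\le N_\alpha^2 r^{-(4+2\alpha)}$ with $\alpha<3<\phi_0+1$, the iterated-integral formula \eqref{eq_w0<2} gives $|w_0[F_0](r)|\lesssim N_\alpha^2 r^{-(2+2\alpha)}$, which is integrable against $s\,\mathrm ds$ (needs $2\alpha>0$, true), so $|\bar w_0|\lesssim |\mu_0-\mu|+N_\alpha^2$. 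Feeding this into \eqref{eq_gamma0>2}: the homogeneous part of $\gamma_0$ decays like $r^{2-\phi_0}$ and the $F_0$-part like $r^{-2\alpha}$. Differentiating once loses one power of $r$ but is absorbed into the stated bounds, giving (i).

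Next, for $|n|\ge 1$ I would use that the Green functions of the $w_n$-equation decay like $r^{\xi_n^-}$ and those of the $\gamma_n$-equation like $r^{-|n|}$. From \eqref{eq_wnF}, the source-driven part $w_n[F_n]$ satisfies: the integral $\int_r^\infty$ with kernel $(r/s)^{\zeta_n^+}$ (with $\Re\zeta_n^+>0$) produces a term decaying like the source integrated twice, i.e.\ $\sim N_\alpha^2 (1+|n|)^{-3} r^{-(2+2\alpha)}$ after using $|\sqrt{\phi_0^2+4(in\mu+n^2)}|\gtrsim 1+|n|$ from Proposition \ref{prop_zetancontrol}, while the $\int_1^r$ with kernel $(r/s)^{\zeta_n^-}$ gives both an $r^{-(2+2\alpha)}$ piece and an $r^{\xi_n^-}$ piece from the boundary at $s=1$. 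The homogeneous coefficient $\bar w_n$ is controlled via \eqref{eq_constantgammaw}-\eqref{eq_constantgammawresonant-new} by $|v_{r,n}^*|+|v_{\theta,n}^*|+|\gamma_n[F_n](1)|+|\partial_r\gamma_n[F_n](1)|$, where the last two are $\lesssim N_\alpha^2 (1+|n|)^{-3}$ by the same integral estimates evaluated at $r=1$. Then from \eqref{eq_gamman}-\eqref{eq_gammanF}, $\gamma_n$ picks up a Green part $\bar\gamma_n r^{-|n|}$ (or the resonant $\ln r$ variant), a part $\sim \bar w_n r^{2+\zeta_n^-}$, and a part $\gamma_n[F_n]$ that gains two powers on $w_n[F_n]$, hence decays like $\max(r^{2+\xi_n^-}, r^{-2\alpha})$. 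For $|n|=1$ the exponent $2+\xi_1^-$ is strictly negative (condition \eqref{eq_conditionphimu}) and one sets $\beta_1:=-(2+\xi_1^-)$ if this lies in $(1/2,1)$, or a smaller value, explaining the $r^{-\beta_1}$ and $r^{-(2+\beta_1)}$ rates; the $\varepsilon$-loss and the exponent $(2-\varepsilon)\alpha$ come from re-bootstrapping: once we know $\gamma$ decays like $r^{-\beta_1}$, the nonlinearity $F=(\mathbf v\cdot\nabla)w$ improves, and iterating the decay exponent of $\gamma_n[F_n]$ approaches $\min((2-\varepsilon)\alpha,\ldots)$. For $|n|\ge 2$ the worst homogeneous rate over all such $n$ is $\xi_2^-$, which is where $\phi_0>21/10$ enters to make $\xi_2^-$ sufficiently negative (strictly below $-1-\beta_1$, say) so that the series over $n$ converges and the nonlinear feedback closes.

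The key bookkeeping is: each application of the variation-of-constants formula produces (a) a homogeneous term whose coefficient is a trace of lower-order data, estimated by Proposition \ref{prop_zetancontrol}-type bounds, and (b) a particular term decaying two powers slower than its source. Matching the three stated alternatives in the $\max(\cdot)$ expressions then amounts to comparing $-|n|$ (Green function of $\gamma_n$), $2+\xi_n^-$ (Green function of $w_n$ fed once more), and $-(2-\varepsilon)\alpha$ (pure nonlinear decay, after the $\varepsilon$-loss from re-iteration). I would organize this as: Step 1, record the source bound \eqref{eq_decayboundary} and the hypothesis on $F_n$; Step 2, estimate $w_n[F_n]$ and its first derivative and their values at $r=1$; Step 3, estimate the homogeneous coefficients $\bar w_n,\bar\gamma_n$ (and $\bar w_0$); Step 4, assemble $w_n,\gamma_n$ and their derivatives and read off the decay; Step 5, in the $|n|=1$ and $|n|\ge 2$ cases, perform the bootstrap that trades the initial $\alpha$-decay for the sharper $(2-\varepsilon)\alpha$ decay, using $\phi_0>21/10$ to guarantee the relevant Green-function exponents are negative enough.

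The main obstacle I anticipate is the bootstrap/self-consistency step: a priori we only know $\mathbf v$ decays like $r^{-1-\beta_0}$ (from $M^{ref}$), so the source $F_n$ — being quadratic in first derivatives of $\mathbf v$ — decays only like $r^{-(4+2\beta_0)}$, i.e.\ we may only take $\alpha$ close to $\beta_0$, which can be small; to reach the target exponents $\beta_1>1/2$ and $\beta^{(1)}>1$ one must iterate, each time improving the decay of the modes, feeding that back into a better bound on $F_n$, and checking the iteration converges to the claimed exponents before the ``ceiling'' exponents $-|n|$ (for $\gamma_n$) and $2+\xi_n^-$ (for $w_n$) are hit. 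Ensuring the constants $C,C_0,C_1$ and the exponent $k$ in Proposition \ref{prop_decaymode1} remain uniform through finitely many iterations, and that the sum over $n\in\mathbb Z$ of the mode-wise estimates converges (which is where the weights $(1+|n|)$ and the precise power losses matter), is the delicate part; this is presumably why the detailed argument is deferred to the appendix.
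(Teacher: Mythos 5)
Your overall strategy --- solve \eqref{odes} by the explicit variation-of-constants formulas \eqref{eq_wn}--\eqref{eq_wnF}, \eqref{eq_gamman}--\eqref{eq_gammanF} (and the $n=0$ formulas \eqref{eq_w0>2}--\eqref{eq_constantgamma0}), estimate each integral against the assumed decay of $F_n$, control $\bar w_n,\bar\gamma_n$ through the trace values $\gamma_n[F_n](1),\partial_r\gamma_n[F_n](1)$ and Proposition \ref{prop_zetancontrol}, and identify $\beta_1$ with (a value slightly below) $-(2+\xi_1^-)$, which lies just above $1/2$ when $\phi_0>21/10$ --- is exactly the paper's. Your treatment of the case $n=0$ and of the three competing exponents $-|n|$, $2+\xi_n^-$, and the source-driven rate is also correct.

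There is, however, one genuine gap: you misattribute the origin of the exponent $(2-\varepsilon)\alpha$. This lemma is a purely linear, single-pass statement --- $F_n$ is \emph{given} data satisfying $|F_n(r)|\le N_\alpha^2 r^{-(4+2\alpha)}$ --- so no bootstrap is available inside its proof; the iteration you describe in Step 5 (improving the decay of $\mathbf v$, feeding it back into $F$) is the \emph{outer} argument of Proposition \ref{prop_decaymode1}, which repeatedly invokes this lemma together with Lemma \ref{lem_decayconvol}. Inside the lemma, the $\varepsilon$-loss must be produced by the integral estimates themselves, and it comes from the resonant case: in
\[
\int_1^r s\,F_n(s)\left(\frac{r}{s}\right)^{\zeta_n^-}{\rm d}s
\quad\text{one meets}\quad
\int_1^r s^{\,1-(\xi_n^-+4+2\alpha)}\,{\rm d}s,
\]
and when $\xi_n^-+2+2\alpha=0$ (which, since $\alpha<3$ and $|\xi_n^-|\gtrsim 1+|n|$, can occur for finitely many $n$) this integral is $\log r$ rather than a clean power. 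The paper absorbs the logarithm by conceding a factor $r^{\varepsilon\alpha/2}$, i.e.\ replacing $r^{-(2+2\alpha)}$ by $r^{-(2+(2-\varepsilon/2)\alpha)}$, with the constant then depending on $\varepsilon$ and $\alpha$. Your Steps 2--4, as written, would claim the unweakened rate $r^{-(2+2\alpha)}$, which is false in the resonant case, and your stated source of the $\varepsilon$ would leave the exponent in the lemma's conclusion unexplained. Relatedly, the ``main obstacle'' you flag (that a priori $\alpha$ can only be taken near $\beta_0$) concerns the proposition, not this lemma, whose hypothesis already fixes $\alpha\in(0,3)$ arbitrarily.
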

\begin{proof}
In the proof, all constants $C$ may depend on $\alpha$ and $\varepsilon.$ But they are independent of $n$ and the data. Following the computations in the existence part, we know that, given $n \in \mathbb Z,$ there is only one solution $(w_n,\gamma_n)$ to (\ref{odes}) and (\ref{bound gamma n}).

In the case $n\neq 0$, we have the formula \eqref{eq_wn} for $w_n:$ 
\[
w_n = \bar{w}_n r^{\zeta_n^-} - w_n[F_n](r),
\]
where:
\[
w_n[F_n](r) = \dfrac{1}{\sqrt{\phi_0^2 + 4(in\mu + n^2)}}  \left( \int_{r}^{\infty} s F_n(s) \left(\dfrac{r}{s}\right)^{\zeta_n^+}{\rm d}s +  \int_{1}^{r}  s F_n(s) \left(\dfrac{r}{s}\right)^{\zeta_n^-}{\rm d}s \right).
\]
Plugging our assumption on the decay of $F_n$ with the remark that $\xi_n^+ ={\Re}(\zeta_n^+) \geq C(1+|n|) > 0$ (see {\bf Proposition \ref{prop_zetancontrol}}), we have:
\[
\begin{aligned}
\left|\int_{r}^{\infty} s F_n(s) \left(\dfrac{r}{s}\right)^{\zeta_n^+} {\rm d}s\right|
& \leq r^{\xi_n^+} N_{\alpha}^2 \int_{r}^{\infty} s^{1 -(\xi_n^+  + 4 + 2\alpha) } ds
\leq \dfrac{N_{\alpha}^2}{|\xi_n^+  + 2 + 2\alpha|} r^{-2-2\alpha}\\
& \leq \dfrac{C}{1+ |n|}  \dfrac{N_{\alpha}^2}{r^{2+2\alpha}}. 
\end{aligned}
\]
We proceed with similar computation for the other terms. We note here that $\xi_n^{-} = {\Re}(\zeta_{n}^-) < 0$, so that we might have $\xi_n^{-} + 2 + 2\alpha = 0.$ In this case we should see a log appearing in the integral. We handle this log term by allowing a small loss in the control on the growth of the integral (thus the power $(2-\varepsilon/2)\alpha$ instead of $2\alpha$). The appearing constant $C$ then depends on $\alpha$ and $\varepsilon$. Furthermore, since $\alpha < 3$ and $\xi_{n}^{-} \geq C(1+|n|)$ (see again {\bf Proposition \ref{prop_zetancontrol}}) this might happen only for $|n|$ in a bounded set. We have then the following bounds:
\[
\begin{aligned}
\left|\int_{1}^{r} s F_n(s) \left(\dfrac{r}{s}\right)^{\zeta_n^-} {\rm d}s\right|
& \leq r^{\xi_n^-} N_{\alpha}^2 \int_{1}^{r} s^{1 -(\xi_n^-  + 4 + 2\alpha) }{\rm d}s \\
& \leq \dfrac{C N_{\alpha}^2}{\max(1,|\xi_n^-  + 2 + 2\alpha|)} \left( r^{-2-(2-\varepsilon/2)\alpha} + r^{\xi_n^-} \right)\\
& \leq \dfrac{C N_{\alpha}^2 }{1+ |n|}
\left(\dfrac{1}{r^{2+(2-\varepsilon/2)\alpha}} + r^{\xi_n^-} \right) .
\end{aligned}
\]
Since $\left|\sqrt{\phi_0^2 + 4(in\mu + n^2)}\right| \geq \sqrt{\phi_0^2 + 4n^2} \geq C(1+|n|)$, we conclude that:
\[
|w_n[F_n](r)|  + \dfrac{r |\partial_r w_n[F_n](r)|}{1+|n|}  \leq  \dfrac{CN_{\alpha}^2}{(1+|n|)^2}  r^{ \max(\xi_n^-,-[2+(2-\varepsilon/2)\alpha])} 
\]
that entails \eqref{eq_resultwn}.

We proceed with $\gamma_n.$ Again, with our results in the existence part, we know that there is a coefficient $\bar{\gamma}_{n}$ for which

\[
\gamma_n(r) =  \frac{\bar{\gamma}_{n}}{r^{|n|}} 
- \bar{w}_n \gamma_n^{(H)}(r)
- \gamma_n[F_n](r) ,
\]
where 
\[
\gamma_n[F_n](r) = \int_{r}^{\infty }\frac{sw_{n}[F_n](s)}{2|n|}\left( \frac{r}{s}\right) ^{|n|}{\rm d}s+\int_{1}^{r}\frac{sw_{n}[F_n](s)}{2|n|}\left( \frac{s}{r}\right)^{|n|}{\rm d}s,
\]
and 
\[
\gamma_n^{(H)}(r) = \left\{
\begin{aligned}
\dfrac{1}{(\zeta_n^- +2)^2 - n^2} r^{\zeta_n^- + 2}, && \text{ if $(\zeta_n^- +2)^2 - n^2 \neq 0$}, \\
\dfrac{1}{2|n|} \ln(r) r^{-|n|},&& \text{ if $(\zeta_n^- +2)^2 - n^2 = 0$}.
\end{aligned}
\right.
\]

Plugging the above decay for $w_n[F_n]$, we obtain with the same computations as above:
\[
\left| \int_{r}^{\infty }sw_{n}[F_n](s) \left( \frac{r}{s}\right) ^{|n|}ds \right| \leq \dfrac{CN_{\alpha}^2}{(1+|n|)^3} r^{\max(2+\xi_n^-,-(2-\varepsilon/2)\alpha/4)} ,  
\]
where, since $\phi_0 > 21/10$ we have $\xi_n^{-} < -2-\beta_1.$
For the last term, we use the same remark and note further that $|\xi_n^{-}|$ is increasing with $|n|,$ we obtain:
\[
\int_{1}^{r}sw_{n}[F_n](s) \left( \frac{s}{r}\right)^{|n|}ds \leq  \dfrac{CN_{\alpha}^2}{(1+|n|)^3} \left( \dfrac{1}{r^{|n|}}+ r^{\max(e(n),-(2-\varepsilon)\alpha)} \right)
\]
with $e(n) =  -\beta_1 $ if $|n|=1,$ or
$e(n) = (2+\xi_2^-)$ if $|n| > 2$. Consequently:
\[
|\gamma_n[F_n](r)|  + \dfrac{r |\partial_r \gamma_n[F_n](r)|}{1+|n|}  \leq \dfrac{CN_{\alpha}^2}{(1+|n|)^4} r^{\max(e(n),-|n|,-(2-\varepsilon)\alpha)}.
\]
We point out that we could improve $e(n)$ by using that $w_n[F_n]$ decays like $\xi_n^-$ further than $\xi_{2}^-.$ But for large $|n|$ we would then loose one power of $|n|$ in the pre-factor that we will miss in the next computations. 

We proceed similarly to estimate $\gamma_n^{(H)}$ recalling that $|(\zeta_n^- +2)^2 - |n|^2 | \geq C(1+|n|)$ for $|n| \geq 2.$ We eventually conclude that:
\[
| \gamma_n^{(H)}(r)| \leq \dfrac{C}{(1+|n|)} r^{-|n|+ \delta_1(n)(1-\beta_1)}, 
\]
where $\delta_1(n)=1$ if $|n|=1,$ or
$\delta_1(n) = 0$ if $|n| > 2$,
and 
\[
|\gamma_n(r)|  + \dfrac{r |\partial_r \gamma_n(r)|}{1+|n|}  \leq C\left(  \dfrac{|\bar{w}_n|}{ (1+|n|)} + \dfrac{N_{\alpha}^2}{(1+|n|)^4} \right) r^{\max(e(n),-|n|,-(2-\varepsilon)\alpha)}.
\]
To conclude, we recall that $\bar{w}_n,\bar{\gamma}_n$ are obtained by solving \eqref{eq_constantgammaw}-\eqref{eq_constantgammawresonant-new}:

\[
|\bar{\gamma}_n| +  \dfrac{|\bar{w}_n|}{1+|n|} \leq  C \left(  |v_{r,n}^*| +  |v_{\theta,_n}^*| + \dfrac{N_{\alpha}^2}{(1+|n|)^3}\right) .
\]

The case $n=0$ yields as a direct application of formulas
\eqref{eq_w0>2}-\eqref{eq_gamma0>2}-\eqref{eq_constantgamma0}.
\end{proof}

\medskip

To bootstrap the information on the decay at infinity of ${\bf v}$, we will use the a priori decay of ${\bf v}$ given by {\bf Proposition \ref{prop_decaymode1}} and show via solving (\ref{odes})-(\ref{bound gamma n}) that we have actually a better decay. To this end, we must take advantage of the splitting in Fourier series of ${\bf v},w$ when computing the $F_n.$  This is the content of the following lemma:

\begin{lemma} \label{lem_decayconvol}
Assume that there are constants $C_0>0$, $M_*^{ref}, $ $\beta_0^* >0$ and $\beta_*^{(0)}>0$ such that:
\[
\begin{aligned}
|\gamma_n(r)| + r\dfrac{|\partial_r \gamma_n(r)|}{1+|n|} & \leq   
\dfrac{C_0}{r^{\beta^{(0)}_*}} \left( |v_{r,n}^*| + |v_{\theta,n}^*| +  \dfrac{|M_{*}^{ref}|^2}{(1+|n|)^3} \right), \\
\dfrac{|w_n(r)|}{1+|n|} +  r\dfrac{|\partial_r w_n(r)|}{(1+|n|)^2} &
\leq 
\dfrac{C_0}{r^{2+ \beta^{(0)}_*}} \left( |v_{r,n}^*| + |v_{\theta,n}^*| +  \dfrac{|M_{*}^{ref}|^2}{(1+|n|)^3} \right),
\end{aligned}
\] 
for $n \neq 0$ and:
\[
|\gamma_0(r)| + r |\partial_r \gamma_0(r)|  \leq   
C_0\dfrac{|M_{*}^{ref}|^2}{r^{\beta^*_0} },  \qquad
|w_0(r)| +  r|\partial_r w_0(r)|  
\leq 
C_0\dfrac{|M_{*}^{ref}|^2}{r^{2+\beta^*_0}} .
\]
Then, there is a constant $C_1$ depending only on $C_0$ s.t. the Fourier-modes $F_n$ of ${\nabla^{\bot} \gamma} \cdot \nabla w$ satisfy:
\[
\begin{aligned}
|F_0(r)| & \leq \dfrac{C_1[M^{ref} + |M_{*}^{ref}|^2]^2 }{r^{4 + 2\min(\beta^*_0,\beta_*^{(0)})}}\,, &&\\[6pt]
|F_n (r)| &\leq \dfrac{C_1[M^{ref} + |M_{*}^{ref}|^2]^2}{r^{4 + \min(\beta^*_0+\beta_*^{(0)},2\beta_*^{(0)})}}\,, && \text{ if $|n|=1$}\,, \\[6pt]
|F_n (r)| & \leq \dfrac{C_1[M^{ref} + |M_{*}^{ref}|^2]^2}{r^{4 + 2\beta_*^{(0)}}}, && \text{ if $|n| >1$} \,.
\end{aligned}
\]
\end{lemma}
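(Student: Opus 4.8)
The plan is to compute the Fourier modes $F_n$ of $\nabla^\perp\gamma\cdot\nabla w$ directly from the convolution formula \eqref{fn}, namely
\[
F_n(r)=\frac{i}{r}\sum_{k+l=n}\bigl(l\gamma_l(r)\,\partial_r w_k(r)-\partial_r\gamma_l(r)\,k\,w_k(r)\bigr),
\]
and to exploit the structure that the zero mode of $\gamma$ and $w$ carries a weaker decay $\beta_0^*$ while all non-zero modes carry $\beta_*^{(0)}$. The key point is to split the sum over $k+l=n$ according to whether $k=0$, $l=0$, or both $k,l\neq 0$, and to use the hypothesized pointwise decay bounds for each factor together with summability in the frequency index coming from the $(1+|n|)^{-3}$-type weights. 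First I would record that, from the assumed bounds, for every $m$ one has $|m\gamma_m(r)|+|m\partial_r\gamma_m(r)|\lesssim C_0 r^{-\beta}(1+|m|)\bigl(|v^*_{r,m}|+|v^*_{\theta,m}|+|M_*^{ref}|^2(1+|m|)^{-3}\bigr)$, with $\beta=\beta_*^{(0)}$ when $m\neq0$ and $\beta=\beta_0^*$ for the (vanishing) $m=0$ angular-derivative term, and likewise $|m w_m(r)|\lesssim C_0 r^{-2-\beta}(1+|m|)^2(\cdots)$; then $\ell^2$-in-$m$ summability of $(|v^*_{r,m}|+|v^*_{\theta,m}|)(1+|m|)^2$ is guaranteed by \eqref{eq_decayboundary} with $\ell^2$-norm bounded by $M^{ref}$, while $\sum_m (1+|m|)^{-3}<\infty$ handles the $|M_*^{ref}|^2$ part.

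The core estimates then go term by term. For $F_0$: here $n=0$ forces $l=-k$, and the dominant contribution is the one pairing a zero-mode factor (decay $r^{-\beta_0^*}$ at the level of $\gamma$, $r^{-2-\beta_0^*}$ at the level of $w$) with a zero-mode factor, or more generally any pairing, so the slowest resulting rate is governed by $2\min(\beta_0^*,\beta_*^{(0)})$ on top of the base rate $r^{-4}$ that comes from one power $1/r$ in front, one from $\partial_r$, and the $r^{-2}$ sitting inside the $w$-estimate; a Cauchy–Schwarz in $k$ against the summable weights closes it. For $F_n$ with $|n|=1$: exactly one of $k,l$ can be $0$ (since $k+l=\pm1$), so every product contains at least one non-zero-mode factor, giving at worst the rate $r^{-4-\min(\beta_0^*+\beta_*^{(0)},\,2\beta_*^{(0)})}$ — the first alternative arising when the other factor is the zero mode, the second when both factors are non-zero. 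For $|n|\geq2$: both $k$ and $l$ cannot simultaneously vanish and in fact at least one of them has $|k|$ or $|l|\geq1$ with the other nonzero as well in the worst surviving case, but more simply every surviving term has at least one non-zero factor carrying $\beta_*^{(0)}$ and the companion factor carries at least $\beta_*^{(0)}$ too (the zero mode can appear only paired with an $|m|\geq 2$ mode, still $\beta_*^{(0)}$), hence the uniform rate $r^{-4-2\beta_*^{(0)}}$; the pre-factor sums are controlled uniformly in $n$ by the same Cauchy–Schwarz argument, using that $\sum_{k+l=n}(1+|k|)^{-a}(1+|l|)^{-a}\lesssim (1+|n|)^{-a}$ for $a$ large enough (here the available powers, roughly $(1+|k|)^{2}/(1+|k|)^{3}=(1+|k|)^{-1}$ from each non-zero $w$- or $\gamma$-pairing after extracting the $M^{ref}$ piece, together with an extra decay from the paired factor, suffice).

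The main obstacle is bookkeeping the powers of $(1+|n|)$ so that the output pre-factor is genuinely $[M^{ref}+|M_*^{ref}|^2]^2$ with no residual loss: one must be careful that each product $\gamma_l\,\partial_r w_k$ (resp. $\partial_r\gamma_l\,k w_k$) produces, after using the hypotheses, a bound of the form $r^{-4-(\beta+\beta')}$ times $\bigl(|v^*_{r,l}|+|v^*_{\theta,l}|+|M_*^{ref}|^2(1+|l|)^{-3}\bigr)\bigl(|v^*_{r,k}|+|v^*_{\theta,k}|+|M_*^{ref}|^2(1+|k|)^{-3}\bigr)$ times at most $(1+|k|)(1+|l|)$ from the explicit $l,k$ factors and the derivative gains, and that, after summing in $k$ (with $l=n-k$), the Young/Cauchy–Schwarz convolution estimate in $\ell^2\times\ell^1$ absorbs these weights into the two $M$-norms without leaving any growing factor in $n$; the slight asymmetry between the $\gamma$ and $w$ weight exponents in the hypotheses is exactly engineered so this works, and verifying it is the only delicate point, the rest being the routine splitting into the three cases $n=0$, $|n|=1$, $|n|\geq2$ described above.
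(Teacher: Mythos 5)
Your proposal is correct and follows essentially the same route as the paper: the heart of both arguments is the splitting of the bilinear term according to the mode groups $\{0\}$, $\{\pm 1\}$, $\{|m|\geq 2\}$ together with the observation of which pairings can contribute to $F_n$ in each of the three cases $n=0$, $|n|=1$, $|n|\geq 2$, combined with the summability of the frequency weights coming from \eqref{eq_decayboundary} and the $(1+|n|)^{-3}$ factors. The only difference is cosmetic: the paper executes the product estimate in physical space (an $L^\infty_\theta\times L^2_\theta$ H\"older bound via the estimates \eqref{eq_decay(0)} on $\|\nabla^{\bot}\gamma^{(0)}(r,\cdot)\|_{L^\infty(\mathbb T)}$ and $\|\nabla w^{(0)}(r,\cdot)\|_{L^2(\mathbb T)}$), whereas you stay on the Fourier side and run the same summability through a Young/Cauchy--Schwarz bound on the convolution $\sum_{k+l=n}$.
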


\begin{proof}
For the proof, we split again:
\[
\begin{aligned}
\left[ \nabla^{\bot} \gamma \cdot \nabla\right] w =& 
\left[ \nabla^{\bot} \gamma_0 \cdot \nabla\right] w_0 + 
\left[ \nabla^{\bot} \gamma_0 \cdot \nabla\right] w_1 + 
\left[ \nabla^{\bot} \gamma_0 \cdot \nabla\right] w^{(1)} +\\
&
\left[ \nabla^{\bot} \gamma_1 \cdot \nabla\right] w_0 + 
\left[ \nabla^{\bot} \gamma_1 \cdot \nabla\right] w_1 + 
\left[ \nabla^{\bot} \gamma_1 \cdot \nabla\right] w^{(1)} +\\
&
\left[ \nabla^{\bot} \gamma^{(1)} \cdot \nabla\right] w_0 + 
\left[ \nabla^{\bot} \gamma^{(1)} \cdot \nabla\right] w_1 +
\left[ \nabla^{\bot} \gamma^{(1)} \cdot \nabla\right] w^{(1)}.  
\end{aligned}
\]

Furthermore, since $(\gamma,w)$ are smooth on $\mathbb R^2 \setminus B,$ we have:
\[
\begin{aligned}
& \nabla^{\bot}{\gamma}^{(0)}(r,\theta) = \sum_{n\neq 0}  \exp(in\theta) \left(- \dfrac{in\gamma_n(r)}{r}{\bf e}_r +  \partial_r \gamma_n(r) {\bf e}_{\theta}\right) ,
\\
&
\nabla {w}^{(0)}(r,\theta) = \sum_{n\neq 0}  \exp(in\theta)
(\partial_r w_n(r) {\bf e}_r + \dfrac{in w_n(r)}{r} {\bf e}_{\theta}).
\end{aligned}\]
In particular, with our assumptions on the decay of $\gamma_n,\, \partial_r \gamma_n$ and $w_n,\, \partial_r w_n,$ together with the introducing remark \eqref{eq_decayboundary}, we infer that:
\begin{equation} \label{eq_decay(0)}
\begin{aligned}
\|\nabla^{\bot} \gamma^{(0)}(r,\cdot)\|_{L^{\infty}(\mathbb T)} & \leq C \dfrac{C_0}{r^{1+ \beta_*^{(0)}}} [M^{ref} + |M_*^{ref}|^2], \\
 \|\nabla w^{(0)}(r,\cdot)\|_{L^{2}(\mathbb T)}
 & \leq C \dfrac{C_0}{r^{3+ \beta_*^{(0)}}} [M^{ref} + |M_*^{ref}|^2].
\end{aligned}
\end{equation}
To compute the zero-mode, we can for instance apply this:
\[
\begin{aligned}
|F_0(r)|  \leq &  \dfrac{1}{\sqrt{2\pi}} \|\nabla^{\bot} \gamma(r,\cdot)\|_{L^{\infty}(T)} \|\nabla w(r,\cdot)\|_{L^2(\mathbb T)} \\
  \leq & 
\dfrac{1}{\sqrt{2\pi}} \left( \|\nabla^{\bot} \gamma_0(r,\cdot)\|_{L^{\infty}(T)} + \|\nabla^{\bot} \gamma^{(0)}(r,\cdot)\|_{L^{\infty}(T)}\right) \\
&
\cdot \left(  \|\nabla w_0(r,\cdot)\|_{L^2(\mathbb T)} +  \|\nabla w^{(0)}(r,\cdot)\|_{L^2(\mathbb T)} \right).
\end{aligned}
\]
Combining  \eqref{eq_decay(0)} with the decay of $\gamma_0, \, w_0$ yields directly the expected result for $F_0$.  When $|n| = 1$ we remark that the term $[\nabla^{\bot} \gamma_0\cdot \nabla ] w_0$ has no contribution and we complement the analysis with the same method. 
When $|n| \geq 2$ we have further that the term:
\[
[\nabla^{\bot} \gamma_0\cdot \nabla ] w_0 + [\nabla^{\bot} \gamma_1\cdot \nabla ] w_0+[\nabla^{\bot} \gamma_0\cdot \nabla ] w_1
\]
does not contribute so that all the involved terms decrease like a non zero mode. We conclude the proof with a similar argument.
\end{proof}

\medskip
 
With these two lemmas at-hand, we complete the proof of {\bf Proposition \ref{prop_decaymode1}}. Indeed, with the assumptions of {\bf Proposition \ref{prop_decaymode1}} we can bound directly:
\[
|F_n(r)| \leq \dfrac{1}{\sqrt{2\pi}}\|{\bf v}(r,\cdot)\|_{L^{\infty}(\mathbb T)} \|\nabla^2 {\bf v}(r,\cdot)\|_{L^{2}(\mathbb T)} \leq \dfrac{1}{\sqrt{2\pi}}\dfrac{|M^{ref}|^2}{r^{4+2\beta_0}},
\] 
for all $n\in \mathbb Z.$ Applying {\bf Lemma \ref{lem_decaynmode}} with $\varepsilon=1/2$ and $\alpha =\beta_0$,  we obtain then that $(\gamma_n,w_n)$ satisfy:
\[
\begin{aligned}
|\gamma_n(r)| + r\dfrac{|\partial_r \gamma_n(r)|}{1+|n|} & \leq   
\dfrac{C}{r^{\min(\phi_0-2,3\beta_0/2)}} \left( |v_{r,n}^*| + |v_{\theta,n}^*| +  \dfrac{|M^{ref}|^2}{(1+|n|)^3} \right), \\
\dfrac{|w_n(r)|}{1+|n|} +  r\dfrac{|\partial_r w_n(r)|}{(1+|n|)^2} &
\leq 
\dfrac{C}{r^{2+ \min(\phi_0-2,3\beta_0/2)}} \left( |v_{r,n}^*| + |v_{\theta,n}^*| +  \dfrac{|M^{ref}|^2}{(1+|n|)^3} \right),
\end{aligned}
\]
for all $n \in \mathbb Z$ (note that $v_{r,0}^* = 0$ and $v_{\theta,0}^* = \mu - {u}_{\theta, 0}^*$ and $\xi_n^{-} < -\phi_0$) so that computations similar to the proof of {\bf Lemma \ref{lem_decayconvol}} yield that:
\[
\|\nabla^{\bot} \gamma(r,\cdot)\|_{L^{\infty}(\mathbb T)} + r^2 \|\nabla w(r,\cdot)\|_{L^{2}(\mathbb T)} \leq \dfrac{C_0}{r^{1+ \min(\phi_0-2,3\beta_0/2)}} (|M^{ref}|+|M^{ref}|^2).
\]
We can then iterate the process and increase little by little the exponent $\beta_0$ as long as $3\beta_0/2 < \phi_0-2.$  In a finite number of steps, we obtain
\[
\begin{aligned}
|\gamma_n(r)| + r\dfrac{|\partial_r \gamma_n(r)|}{1+|n|} & \leq   
\dfrac{C}{r^{(\phi_0-2)}} \left( |v_{r,n}^*| + |v_{\theta,n}^*| +  \dfrac{|{M}_*^{ref}|^2}{(1+|n|)^3} \right), \\
\dfrac{|w_n(r)|}{1+|n|} +  r\dfrac{|\partial_r w_n(r)|}{(1+|n|)^2} &
\leq 
\dfrac{C}{r^{\phi_0}} \left( |v_{r,n}^*| + |v_{\theta,n}^*| +  \dfrac{|{M}_*^{ref}|^2}{(1+|n|)^3} \right),
\end{aligned}
\]
with ${M}_*^{ref}$ of the form $C_0|M^{ref}|^2 + C_1 |M^{ref}|^{2k}$ for some positive constants $C_0, C_1$ and exponents $k$ depending on the number of iterations. 
 
 \medskip
 
We are then in a position to apply {\bf Lemma \ref{lem_decayconvol}}  with $\beta_0^{*} = \beta^{(0)}_* = \phi_0-2$ and $\varepsilon= (\phi_0-2)/(\phi_0-1).$ We obtain that $(F_n)_{n\in \mathbb Z}$ matches the assumptions of {\bf Lemma \ref{lem_decaynmode}} with $\alpha = \phi_0-2,$ in case $n=0$ and $\alpha = (\beta_0^{*} + \beta^{(0)}_*)/2$ in case $n \neq 2.$ 
Independent applications in case $n=0$ or $n\neq 0$ yield no better decay estimate in case $n=0$ but:
\[
\begin{aligned}
|\gamma_n(r)| + r\dfrac{|\partial_r \gamma_n(r)|}{1+|n|} & \leq   
\dfrac{C}{r^{\min(\beta_{1},(2-\varepsilon)(\beta_0^{*} + \beta^{(0)}_*)/2)}} \left( |v_{r,n}^*| + |v_{\theta,n}^*| +  \dfrac{[M^{ref} + M_*^{ref}]^2}{(1+|n|)^3} \right), \\
\dfrac{|w_n(r)|}{1+|n|} +  r\dfrac{|\partial_r w_n(r)|}{(1+|n|)^2} &
\leq 
\dfrac{C}{r^{2+ \min(\beta_1+2,(2-\varepsilon)(\beta_0^{*} + \beta^{(0)}_*)/2)}} \left( |v_{r,n}^*| + |v_{\theta,n}^*| +  \dfrac{[M^{ref} + M_*^{ref}]^2}{(1+|n|)^3} \right),
\end{aligned}
\]
where $(2-\varepsilon)(\beta_0^{*} + \beta^{(0)}_*)/2 =(2-\varepsilon)(\phi_0-2) > \phi_0-2$ (since $\varepsilon < 1$). We obtain that $(\gamma_n,w_n)$ matches the assumptions of {\bf Lemma \ref{lem_decayconvol}}  with $\beta_0^{*} = \phi_0-2$ and  $\beta^{(0)}_* = \min(\beta_1,(2-\varepsilon)(\phi_0-2)).$ We can then iterate the process
as long as $\beta_0^{*} < \beta_1.$ Indeed, whenever $\beta_0^{*} < \beta_1 < 1$ we have:
\[
(2-\varepsilon)\dfrac{(\phi_0-2)+ \beta^{(0)}_*}{2} > \beta^{(0)}_*. 
\]
In a finite number of steps we reach then the value  $\beta^{(0)}_* = \beta_1$ and we obtain the expected result.
We can then iterate once more the convolution/resolution argument. We obtain that the convolution term satisfy:
\[
|F_n (r)|  \leq \dfrac{C_0 M^{ref} + C_1 |M^{ref}|^{2k}]}{r^{4 + 2\beta_1}}, \quad \qquad \forall \, |n| \geq 2,
\]
for some constants $C_1,C_1$ and exponent $k.$ Consequently, we have:
\[
 |\gamma_n(r)|  + \dfrac{r|\partial_r \gamma_n(r)|}{(1+|n|)} \leq  C \left(  |v_{r,n}^*| +  |v_{\theta,_n}^*| + \dfrac{C_0 M^{ref} + C_1 |M^{ref}|^{2k}]}{(1+|n|)^3} \right) r^{\max(2+\xi_2^-,-(2-\varepsilon)\beta_1,-2)},
\]
for all $\varepsilon >0$ arbitrary small. 
Since $\beta_1 > 1/2$ we can choose $\varepsilon$ sufficiently small to infer like previously that:
\[
|{\bf v}^{(1)}| \leq \dfrac{C_0 M^{ref} + C_1 |M^{ref}|^{2k}}{r^{1+2\beta_1^-}}.
\]
The decay information on ${ \nabla {\bf v}_1(x)}$ and $\nabla {\bf v}^{(1)}(x)$ in {\bf Proposition \ref{prop_decaymode1}}  will follow in a similar way.



\end{document}